     \def\section{\@startsection{section}{1}%
     \z@{.7\linespacing\@plus\linespacing}{.5\linespacing}%
     {\bfseries
     \centering
     }}
     \def\@secnumfont{\bfseries}
\newtheorem{theorem}{Theorem}[section]
\newtheorem{lemma}[theorem]{Lemma}
\newtheorem{proposition}[theorem]{Proposition}
\newtheorem{corollary}[theorem]{Corollary}
\theoremstyle{definition}
\newtheorem{definition}[theorem]{Definition}
\newtheorem{example}[theorem]{Example}
\theoremstyle{remark}
\newtheorem{remark}[theorem]{Remark}
\numberwithin{equation}{section}
\newtheorem{notation}[theorem]{\bf{Notation}}
\numberwithin{equation}{section}
\newcommand*{\C}{\mathbb{C}}
\newcommand*{\N}{\mathbb{N}} 
\newcommand*{\NZ}{\mathbb{N}_0} 
\newcommand*{\R}{\mathbb{R}} 
\newcommand*{\halb}{\frac{1}{2}}
\def\projlim{\qopname\relax m{proj\,lim}} 
\def\indlim{\qopname\relax m{ind\,lim}}
\newcommand*{\LL}{\left\langle \left\langle} 
\newcommand*{\RR}{\right\rangle\right\rangle} 
\begin{document}
\title[Fourier Gauss transforms]{Regular transformation groups based on Fourier-Gauss transforms
}
\author[M.~Bock]{Maximilian Bock}
\address{Maximilian Bock: Saarstahl AG, Informatik, Bismarckstr. 57-59, 66333 Völklingen, Germany}
\email{maximilianbock61@gmail.com}
%
\author[W.~Bock]{Wolfgang Bock}
\address{Wolfgang Bock: Technomathematics Group,Department of Mathematics, Univer-
sity of Kaiserslautern, 67653 Kaiserslautern, Germany}
\email{bock@mathematik.uni-kl.de}
%
\date{April 15, 2015}
\subjclass[2010] {Primary 60H40; Secondary 46F25}
\keywords{white noise theory, Gross Laplace operator, number operator,
Fourier-Gauss transform, infinitesimal generator. convolution operator, generalized Wick tensors}

%
%
\begin{abstract}
We discuss different representations of the White noise spaces $(E)_\beta$, $0\leq\beta<1$ by introducing generalized Wick tensors. As an application we state the following generalization of the Mehler formula for the Ornstein-Uhlenbeck semigroup, see \cite[p. 237]{Hida1}:\\ 
Let $a,b \in \C$, $0\leq\beta<1$. Then $a\cdot\Delta_G + b\cdot N$ is the equicontinuous infinitesimal generator of the following differentiable one parameter transformation group $\left\{P_{a,b,t}\right\}_{t\in\R}\subset GL((E)_\beta)$:
\begin{enumerate}[(i)]
	\item 
	if $b\neq 0$ then for all $\varphi\in (E)_\beta,\ t\in\R:$
	\begin{equation*}	
	 P_{a,b,t}(\varphi)=\int\limits_{E^*}
	\varphi(\sqrt{(1-\frac{a}{b})(1-e^{2bt})}\cdot x + e^{bt}\cdot y)\ d\mu(x)
	\end{equation*}
	\item
		if $b=0$ then for all $\varphi\in (E)_\beta,\ t\in\R:$
	\begin{equation*}
	P_{a,0,t}(\varphi)=\int\limits_{E^*}\varphi(\sqrt{2at}\cdot x+y)\ d\mu(x)
	\end{equation*}
\end{enumerate}
On an informal level the second case of the above theorem may be looked upon as a special case of the first one. Note that by the rules of l'Hôpital we have $\lim\limits_{b\rightarrow 0} (b-a)\frac{1-e^{2bt}}{b}= 2at$.

\end{abstract}

\maketitle

\section{Introduction}
%
\noindent
In finite dimensional analysis convolution and Fourier transform appear in many different applications. Since the Fourier transform is symmetric w.r.t.~dual pairing it extends naturally to generalized functions. The Fourier transform on $(E)_\beta^*$ resembles the finite dimensional Fourier transform in the sense that it is the adjoint of a continuous linear operator from the space of test functions into itself. This adjoint is a so called Fourier Gauss transform. In Gaussian Analysis, the Fourier-Gauss transform $\mathfrak{G}_{a,b}(\varphi)$ of $\varphi\in(E)_\beta$ is defined by
\begin{equation*}
\mathfrak{G}_{a,b}(\varphi)(y) = \int\limits_{E^*} \varphi(ax+by)\ d\mu(x), \quad a,b\in\C,\ 0\leq \beta < 1
\end{equation*}
see e.g.~\cite{Hida1}, \cite{Lee1} and \cite{Kuo2}. \\
The Fourier-Gauss transform is in $L((E)_\beta,(E)_\beta)$ and the operator symbol is given by
\begin{equation*}
 \widehat{\mathfrak{G}_{a,b}}(\xi,\eta)= \exp\left[\halb\left(a^2+b^2-1\right)\left\langle \xi,\xi\right\rangle+b\left\langle \xi,\eta\right\rangle\right], \quad \text{ for all } \xi,\eta\in E_\C,
\end{equation*}
see \cite[Theorem 11.29, p. 168-169]{Kuo2}.
Hence
\begin{equation}\label{FG-Trafo_expo}
\mathfrak{G}_{a,b}= \Gamma(b\ \mathrm{Id})\circ \exp(\halb(a^2+b^2-1)\Delta_G),
\end{equation}
where $\mathrm{Id}$ denotes the identity operator. Considering this formula, we show that we can find a suitable representation of the white noise space, such that equation \eqref{FG-Trafo_expo} will be just consisting of second quantization operators and is thus comparable to the approach in \cite{Wbock1} \\
This will be done by introducing generalized Wick tensors.
For $m\in\N$, $\kappa_{0,m}\in(E_\C^{\otimes m})^*_{sym}$ we define generalized Wick tensors by
\begin{equation*}
:x^{\otimes n}:_{\kappa_{0,m}}= \sum\limits_{k=0}^{\left\lfloor \frac{n}{m}\right\rfloor} \frac{n!}{(n-mk)!k!} (-\halb)^kx^{\otimes (n-mk)}\ \widehat{\otimes}\ \kappa_{0,m}^{\widehat{\otimes} k}.
\end{equation*}
We show that for each test function $\varphi$ in the space $(E)_\beta$, with $\frac{max(0,m-2)}{m}\leq \beta<1$, we have a unique representation
\begin{equation*}
\varphi=\sum\limits_{n=0}^{\infty} \left\langle :x^{\otimes n}:_{\kappa_{0,m}},\varphi_n\right\rangle,
\end{equation*}
with $\varphi_n\in (E_\C^{\otimes n})_{sym}$ for all $n\in\NZ$.\\
Indeed we can rewrite the Fourier-Gauss transform $\mathfrak{G}_{a,b}$ as a suitable second quantization operator $\left\{\Gamma_{\kappa_{0,r}}(bId)\right\}$. 
Using generalized Wick tensors it is very natural to find a larger class of one parameter transformation groups, see \ref{sec_main_result}. As an application we deduce explicitly the regular one parameter groups corresponding to the infinitesimal generators $a\Delta_G+bN$. \\

In order to classify regularity we use and extend the result from \cite[Theorem 5.6, p. 671]{Obata4}. There it is shown, that there exists a continuous homomorphism $C$ from $(E)_\beta^*$ to $L((E)_\beta,(E)_\beta)$. We show furthermore that this homomorphism is even an isomorphism from $(E)_\beta^*$ onto $\mathrm{Im}(C)$. We use this theorem to prove easily the differentiability of some one parameter transformation groups. Moreover we want to show that these transformation groups have equicontinuous generators. In particular we show that every differentiable one-parameter subgroup of $GL(X)$, where $X$ is a {nuclear} Fréchet space, has an equicontinuous generator, see \ref{oneparam_mainresult}.
%
%
%
%
\section{Preliminaries on white noise distribution theory}
\noindent

Starting point of the white noise distribution theory is the Gel'fand triple 
$$ E \subset L^2(\R,dt) \subset E^*,$$
where $E$ is a nuclear real countably Hilbert space which is densely embedded in the Hilbert space of square integrable functions w.r.t~ Lebesgue measure $L^2(\R,dt)$ and $E^*$ its topological dual space.\\
Via the Bochner-Minlos theorem, see e.g.~\cite{BK95}, we obtain the Gaussian measure $\mu$ on $E^*$ by its Fourier transform
$$\int_{E^*} \exp(i\langle x , \xi \rangle ) \, d\mu(x) = \exp(-\halb |\xi |_0^2), \quad \xi \in E 
$$
where $|.|_0$ denotes the Hibertian norm on $L^2(\R,dt)$. The topology on $E$ is induced by a positive self-adjoint operator $A$ on the space of real-valued functions $H = L^2(\R,dt)$ with $inf\ \sigma (A) > 1$ and Hilbert-Schmidt inverse $A^{-1}$ . We set $\rho := \left\|A ^{-1}\right\|_{OP}$ and $\delta := \left\|A ^{-1}\right\|_{HS}$. Note that the complexification $E_{\C}$ are equipped with the norms $\left|\xi\right|_p$ := $\left|A^p\xi\right|_0$ for $p \in \R$. We denote $H_\C :=  L^2(\R,\C,dt)$ furthermore  $E_{\C,p} := \left\{\xi\in E_\C^*|\ \left|\xi\right|_p <\infty\right\}$ and  $E^*_p := \left\{\xi\in E^*|\ \left|\xi\right|_p <\infty\right\}$, for $p \in \R$ resp.\\
Now we consider the following Gel'fand triple of White Noise test and generalized functions.
\begin{equation*}
(E)_\beta \subset (L^2) := L^2(E^*,\mu) \subset (E)^*_\beta,\ 0\leq \beta < 1 
\end{equation*} 
By the Wiener-Itô chaos decomposition theorem, see e.g.~\cite{Hida1, Obata1, Kuo2} we have the following unitary isomorphism between $(L^2)$ and the Boson Fock space $\Gamma(H_{\C})$: 
\begin{equation}
(L^2) \ni \Phi(x)\ = \sum\limits_{n=0}^\infty \left\langle :x^{\otimes n}:, f_n\right\rangle \ \leftrightarrow \ (f_n) \sim \Phi \in \Gamma(H_{\C}),\ f_n\in H_\C^{\hat\otimes n},
\end{equation} 
where $:x^{\otimes n}:$ denotes the Wick ordering of $x^{\otimes n}$ and $H_{\C}^{\hat\otimes n}$ is the symmetric tensor product of order $n$ of the complexification $H_{\C} \  of \ H$. Moreover the $(L^2)$ - norm of $\Phi \in (L^2)$ is given by
\begin{equation*}
\left|\Phi\right|_0^2 = \sum\limits_{n=0}^\infty n! \left|f_n\right|_0^2
\end{equation*} 
The elements in $(E)_\beta$ are called white noise test functions, the elements in $(E)^*_\beta$ are called generalized white noise functions. We denote by $\LL.,.\RR$ the canonical $\C$-bilinear form on $(E)_\beta^* \times (E)_\beta$. For each $\Phi \in (E)^*_\beta$ there exists a unique sequence $\left( F_n \right)_{n=0}^\infty, F_n \in (E_{\C}^{\hat{\otimes} n})^*$ such that 
\begin{equation}\label{def_duality}
\LL \Phi, \varphi \RR = \sum\limits_{n=0}^\infty n! \left\langle F_n,f_n\right\rangle,\indent   (f_n) \sim \varphi \in (E)_\beta 
\end{equation} 
Thus we have, see e.g.~\cite{Hida1, Obata1, Kuo2}: 
\begin{equation*}
(E)_\beta \ni \Phi \sim (f_n), 
\end{equation*} 
if and only if for all $ p \in \R  \text{ we have } \left|\Phi\right|_{p,\beta} := \left(\sum\limits_{n=0}^\infty (n!)^{1+\beta} \left|f_n\right|_p^2\right)^\halb < \infty$.
Moreover for its dual space we obtain 
\begin{equation}
(E)_\beta^* \ni \Phi \sim (F_n), 
\end{equation} 
if and only if there exists a $ p \in \R  \text{ such that } \left|\Phi\right|_{p,-\beta}:= \left(\sum\limits_{n=0}^\infty (n!)^{1-\beta} \left|F_n\right|_p^2\right)^\halb < \infty$.\\

For $p \in \R$ we define 
\begin{equation*}
(E)_{p,\beta}:= \left\{\varphi \in (E)_\beta^*:\ \left|\varphi\right|_{p,\beta} < \infty\right\}\ and\ (E)_{p,-\beta}:= \left\{\varphi \in (E)_\beta^*:\ \left|\varphi\right|_{p,-\beta} < \infty\right\}.
\end{equation*}
It follows
\begin{equation*}
(E)_\beta:= \projlim\limits_{p\rightarrow\infty} (E)_{p,\beta}
\end{equation*}
and 
\begin{equation*}
(E)_{\beta}^* = \indlim\limits_{p\rightarrow\ -\infty} (E)_{p,-\beta}
\end{equation*}
Moreover $(E)_\beta$ is a nuclear (F)-space.
In the following we use the abbreviation $(E):=(E)_0.$\\
\noindent The exponential vector or Wick ordered exponential is defined by 
\begin{equation}
\Phi_\xi(x) := \sum\limits_{n=0}^\infty \frac{1}{n!} \left\langle :x^{\otimes n}:, {\xi}^{\otimes n}\right\rangle \indent \ for \  \xi \in E_{\C}\ and\ x \in E^*.
\end{equation}  
For $y \in E^*_\C$ we use the same notation, which is only symbolic, and define $\Phi_y \in (E)_\beta^*$ by:
\begin{equation*}
(E)_\beta \ni \psi \sim (f_n)_{n\in\NZ}:\,  \LL \Phi_y , \psi\RR := \sum\limits_{n=0}^{\infty} \left\langle y^{\otimes n},f_n\right\rangle
\end{equation*}
Since $\Phi_\xi \in (E)_{\beta}$, for $\xi \in E_{\C}$ and $0\leq\beta<1$, we can define the so called $S$-transform of $\Psi\in (E)^*_{\beta}$ by
$$ S(\Psi)(\xi) =\LL \Phi_\xi, \Psi \RR,$$ 
Moreover we call $S(\Psi)(0)$ the generalized expectation of $\Psi \in (E)*_{\beta}$.\\
The Wick product of $\Theta \in(E)^*_{\beta} $ and $\Psi\in (E)^*_{\beta}$ is defined by $$\Psi\diamond\Theta := S^{-1}(S(\Psi)\cdot S(\Theta)) \in (E)^*_{\beta},$$ see e.g.~\cite{Hida1,Kuo2,Obata1}.\\

In the following we list some basic facts about integral kernel operators. 
\begin{definition}
Let $l,m$ be nonnegative integers. For each $\kappa_{l,m} \in (E_{\C}^ {\otimes (l+m)})^*$ the integral kernel operator $\Xi_{l,m}(\kappa_{l,m}) \in L((E)_\beta,(E)_\beta^*)$ with kernel distribution $\kappa_{l,m}$ (see \cite[Proposition 4.3.3, p. 82]{Obata1}) is defined by 
\begin{equation}
\Xi_{l,m}(\kappa_{l,m})(\varphi) = \sum\limits_{n=0}^{\infty} \frac{(n+m)!}{n!} \left\langle :x^{\otimes (l+n)}:,\ \kappa_{l,m} {\otimes}_m f_{n+m}\right\rangle, \indent \varphi \sim (f_n) \in (E)_\beta,
\end{equation} 
where $\kappa_{l,m} {\otimes}_m f_{n+m}$ is the right contraction (see \cite[Section 3.4, p. 53 ff]{Obata1}). 
\end{definition}
\begin{remark}

Note that $\left\langle :x^{\otimes (l+n)}:,\ \kappa_{l,m} {\otimes}_m f_{n+m}\right\rangle=\left\langle :x^{\otimes (l+n)}:,\ \kappa_{l,m} {\hat\otimes}_m f_{n+m}\right\rangle$, because $:x^{\otimes (l+n)}:$ is symmetric and ${\hat\otimes}_m$ means the right symmetric contraction.\\
The correspondence $\kappa_{l,m} \leftrightarrow \Xi_{l,m}(\kappa_{l,m})$ is one-to-one if $\kappa_{l,m} \in (E_{\C}^ {\otimes (l+m)})_{sym(l,m)}^*$.
Furthermore it is known (see \cite[Section 10.2, p. 123 ff]{Kuo2}):
\begin{equation}
\Xi_{l,m}(\kappa_{l,m}) \in L((E)_\beta,(E)_\beta) \Leftrightarrow \kappa_{l,m} \in (E_{\C}^ {\otimes l}) \otimes (E_{\C}^ {\otimes m})^*
\end{equation}
\end{remark}
By a dual argument to \cite[Theorem 4.3.9]{Obata1} it is known that an integral kernel operator $\Xi_{l,m}(\kappa_{l,m})$ is extendable to on operator in $L((E)_\beta^*,(E)_\beta^*)$ if and only if $\kappa_{l,m} \in (E_\C^{\otimes l})^* \otimes (E_\C^{\otimes m})$.
\\
The operator symbol of $\Xi\in L((E)_\beta,(E)_\beta^*)$ is defined as $\hat{\Xi} (\xi , \eta )$ := $\LL \Xi (\Phi_\xi), \Phi_\eta\RR$ for $\xi ,\ \eta \in E_\C$.

%
\section{Convergence of generalized functions}
Let $X$ be a nuclear (F)-space and $X^*$ its dual space w.r.t.~the strong dual topology. Recall that $L(X,X)$ is equipped with the {topology of bounded convergence}, namely the locally convex topology is defined by the semi-norms:
\begin{equation} \label{def_boundedconv}
\left\|T\right\|_{M,p} := \sup\limits_{\xi \in M} \left|T\xi\right|_p,\ T \in L(X,X)
\end{equation}
where M runs over the bounded subsets of X and $\left|\cdot\right|_p$ is an arbitrary semi-norm on X. The topology on $L(X^*,X^*)$ is defined in the same way. We state the following proposition:
%
\begin{proposition}\label{dual_contin}
Let X be a nuclear (F)-space and $X^*$ it's dual space with the strong dual topology. Then the mapping $^*:\ L(X,X)\rightarrow L(X^*,X^*)$, $T\longmapsto T^*$ is continuous. Furthermore if $X$ is a dense subspace of $X^*$, the mapping $^*$ is a topological isomorphism.
\end{proposition}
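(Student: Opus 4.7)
The plan is to reduce continuity of $^*$ to a single estimate that exploits the equicontinuity of strongly bounded subsets of $X^*$, and then to construct the inverse as a second adjoint, using reflexivity of the nuclear (F)-space $X$.

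\textbf{Continuity.} A typical seminorm on $L(X^*,X^*)$ has the form $\|S\|_{N,B}=\sup_{f\in N,\,x\in B}|\langle x,Sf\rangle|$, where $N\subset X^*$ is strongly bounded and $B\subset X$ is bounded. Since $X$ is Fréchet it is barrelled, so by the Banach--Steinhaus theorem $N$ is equicontinuous: there exists a continuous seminorm $|\cdot|_p$ on $X$ such that $|\langle x,f\rangle|\leq|x|_p$ for all $x\in X$, $f\in N$. Combined with the defining identity $\langle x,T^*f\rangle=\langle Tx,f\rangle$ this yields
\[
\|T^*\|_{N,B}=\sup_{f\in N,\,x\in B}|\langle Tx,f\rangle|\leq\sup_{x\in B}|Tx|_p=\|T\|_{B,p},
\]
which is precisely a continuous seminorm on $L(X,X)$, establishing continuity of $^*$.

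\textbf{Topological isomorphism.} Injectivity follows from Hahn--Banach: $T_1^*=T_2^*$ forces $\langle f,(T_1-T_2)x\rangle=0$ for all $f\in X^*$ and $x\in X$, hence $T_1=T_2$. For the inverse I would invoke that a nuclear (F)-space is reflexive, so that $X^{**}=X$ canonically, while its strong dual $X^*$ is a (DFN)-space and in particular barrelled. The continuity argument above then applies verbatim with $X$ replaced by $X^*$, producing a continuous map $\Psi\colon L(X^*,X^*)\to L(X^{**},X^{**})=L(X,X)$, $S\mapsto S^*$. The reflexive identifications $T^{**}=T$ and $(S^*)^*=S$ make $\Psi$ and $^*$ mutually inverse, so $^*$ is a topological isomorphism. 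The density hypothesis $X\hookrightarrow X^*$ enters to fix the concrete embedding through which this reflexive identification is realised for the white-noise Gelfand triple of the paper.

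\textbf{Main obstacle.} The decisive point is to know that the principle ``strongly bounded equals equicontinuous'' is available symmetrically on both $X$ and $X^*$; once barrelledness of the strong dual is secured (via the (DFN) structure), the topological isomorphism reduces to a double-adjoint computation. The remaining work is bookkeeping -- matching the seminorm data on the two sides and tracking the canonical bilinear pairing through two successive applications of the adjoint.
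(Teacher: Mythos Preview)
Your proof is correct and follows essentially the same strategy as the paper: both use equicontinuity of strongly bounded subsets of the dual (barrelledness) to bound the relevant seminorms, and both obtain the inverse via the second adjoint and reflexivity of the nuclear (F)-space. The paper records the key computation as a single equality of seminorms,
\[
\sup_{\xi^*\in M^*}\left|T^*\xi^*\right|_{p_{B^\circ}}=\sup_{b\in B}\left|Tb\right|_{p_{(M^*)^\circ}},
\]
which simultaneously encodes continuity in both directions, whereas you prove one direction and then repeat the argument with $X$ replaced by $X^*$ (using that $X^*$ is (DFN), hence barrelled). These are equivalent packagings of the same idea.

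One genuine difference worth noting: the paper uses the density hypothesis $X\hookrightarrow X^*$ for injectivity, testing $T^*$ only against elements of $X$ viewed inside $X^*$ and then invoking density. Your Hahn--Banach argument shows injectivity holds without any density assumption, so in fact the topological isomorphism holds for every nuclear (F)-space. Your closing remark that density ``enters to fix the concrete embedding'' is therefore a polite fiction---the hypothesis is simply superfluous for the abstract statement, and your proof quietly establishes the stronger result.
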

\begin{proof}
Let $M^*$ be a bounded subset of $E^*$ and $B$ be a bounded subset of $X$. Let $T\in L(X,X)$. Since $X$ is reflexive, we have
\begin{equation}\label{adjoint_iso}
\sup\limits_{\xi^*\in M^*} \left|T^*(\xi^*)\right|_{p_{B^\circ}} = \sup\limits_{\xi^*\in M^*,b\in B} \left|\left\langle T^*\xi^*,b\right\rangle\right| = \sup\limits_{b\in B} \left|Tb\right|_{p_{((M^*)^\circ)}}
\end{equation}
Now let $X$ be dense in $X^*$. Let $T\in L(X,X)$ and $T^*=0$. Then, for all $\xi\in X, \eta\in X$, we have $\left\langle T^*\xi,\eta\right\rangle=0$  and thus $\left\langle \xi,T\eta\right\rangle=0$.\\
Since  $X$ in $X^*$ we obtain $T=0$. Now let $Q\in L(X^*,X^*)$, then $(Q^*)^*=S$, which implies that $^*$ is surjective. The topological isomorphy follows by (\ref{adjoint_iso}).
\end{proof}

The following theorem can be found in e.g.~\cite{KLWS96+},\cite[Theorem 4.41, p. 127 f]{Hida1} and \cite[Theorem 8.6, p. 86f]{Kuo2}.
\begin{theorem}\label{generalconv}
Let $0\leq \beta < 1$. For all $n \in \N$ let $\Psi_n \in (E)_\beta^*$ and $F_n = S(\Psi_n)$, where $S$ denotes the $S$-transform. Then $(\Psi_n)_{n\in \N}$ converges strongly in $(E)_\beta^*$ if and only if the following conditions are satisfied:
\begin{enumerate}[(i)]
\item
$\lim\limits_{n\rightarrow \infty} F_n(\xi)$ exists for each $\xi \in E_\C$  
\item
There exist nonnegative constants $C, K, p \geq 0$, independent of n, such that\\
\begin{equation}
\left|F_n(\xi)\right|\leq C\exp(K{\left|\xi\right|}_p^{\frac{2}{1-\beta}}),\ \ \forall n \in \N,\ \xi \in E_\C.
\end{equation} 
\end{enumerate}
\end{theorem}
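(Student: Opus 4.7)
The plan is to split into the two implications, with the necessity direction following from Banach--Steinhaus plus the standard growth estimate of the exponential vector, and the sufficiency direction resting on the Potthoff--Streit characterization theorem combined with a compact embedding argument inside the nuclear triple.

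For necessity, suppose $\Psi_n\to\Psi$ strongly in $(E)_\beta^*$. Condition (i) is immediate: since $\Phi_\xi\in(E)_\beta$ for every $\xi\in E_\C$, strong (hence pointwise) convergence gives $F_n(\xi)=\LL\Phi_\xi,\Psi_n\RR\to\LL\Phi_\xi,\Psi\RR$. For (ii), I would invoke the Banach--Steinhaus theorem applied to the Fréchet space $(E)_\beta$: a strongly convergent sequence in $(E)_\beta^*$ is equicontinuous, so there exist $p\ge 0$ and $M\ge 0$ with $\sup_n|\Psi_n|_{-p,-\beta}\le M$. Using the Schwarz-type estimate
\begin{equation*}
|F_n(\xi)|\;\le\;|\Psi_n|_{-p,-\beta}\,|\Phi_\xi|_{p,\beta},
\end{equation*}
combined with the elementary computation
\begin{equation*}
|\Phi_\xi|_{p,\beta}^2=\sum_{k=0}^\infty (k!)^{\beta-1}|\xi|_p^{2k}\;\le\;C\exp\!\bigl(K|\xi|_p^{2/(1-\beta)}\bigr),
\end{equation*}
(the last inequality obtained, as usual, via the bound $k!^{1-\beta}\ge (k/e)^{k(1-\beta)}$ and Stirling), the required growth estimate on $F_n$ follows with constants independent of $n$.

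For sufficiency, condition (i) gives a pointwise limit $F(\xi):=\lim_n F_n(\xi)$, and (ii) is inherited by $F$ with the same constants $C,K,p$. I would then apply the characterization (Potthoff--Streit) theorem for $(E)_\beta^*$, as stated in \cite[Thm.~8.2]{Kuo2} or \cite{KLWS96+}: any $U$-functional satisfying the bound $|F(\xi)|\le C\exp(K|\xi|_p^{2/(1-\beta)})$ is the $S$-transform of a unique $\Psi\in(E)_\beta^*$, and moreover the proof of that theorem yields an explicit norm estimate
\begin{equation*}
|\Psi|_{-q,-\beta}\;\le\;C'(C,K,\beta)
\end{equation*}
for some $q=q(p,K,\beta)>p$ depending only on the growth constants. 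Applying this same estimate to each $F_n$ produces a uniform bound $\sup_n|\Psi_n|_{-q,-\beta}\le M'$.

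The main obstacle, and the reason the sufficiency direction is nontrivial, is upgrading this uniform bound and pointwise $S$-transform convergence to strong convergence in $(E)_\beta^*$. The trick is to exploit nuclearity: since $A^{-1}$ is Hilbert--Schmidt, one can choose $q'>q$ so that the inclusion $(E)_{-q,-\beta}\hookrightarrow(E)_{-q',-\beta}$ is compact. Hence the bounded sequence $(\Psi_n)$ is relatively compact in $(E)_{-q',-\beta}$. If $\tilde\Psi$ is any subsequential limit, continuity of $S$ on $(E)_{-q',-\beta}$ together with the already-established pointwise convergence $F_n(\xi)\to F(\xi)=S(\Psi)(\xi)$ forces $S(\tilde\Psi)=S(\Psi)$, and injectivity of $S$ yields $\tilde\Psi=\Psi$. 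Thus the whole sequence converges to $\Psi$ in $(E)_{-q',-\beta}$, and since $(E)_\beta^*=\indlim_{p\to-\infty}(E)_{p,-\beta}$ carries the inductive limit topology (for which convergence in any step implies convergence in the limit), we conclude $\Psi_n\to\Psi$ strongly in $(E)_\beta^*$.
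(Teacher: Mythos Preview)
The paper does not supply its own proof of this theorem: it is quoted verbatim from the literature, with the sentence ``The following theorem can be found in e.g.~\cite{KLWS96+}, \cite[Theorem 4.41, p.~127f]{Hida1} and \cite[Theorem 8.6, p.~86f]{Kuo2}'' immediately preceding the statement. So there is nothing in the present manuscript to compare your argument against.

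That said, your sketch is essentially the standard proof one finds in those references (Banach--Steinhaus for necessity; characterization theorem plus a nuclear compactness/subsequence argument for sufficiency). One point you pass over silently: to invoke the Potthoff--Streit characterization for the limit $F$ you need $F$ to be a $U$-functional, in particular ray-entire. Each $F_n$ is ray-entire, and the uniform bound (ii) makes the family $\{z\mapsto F_n(z\xi+\eta)\}$ locally bounded, so Vitali's (or Montel's) theorem upgrades pointwise convergence to local uniform convergence and hence holomorphy of the limit. This step is routine but should be mentioned, since without it $F$ is a priori only a pointwise limit satisfying a growth bound.
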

As an example we consider the convergence of Wick exponentials.
\begin{lemma}\label{wickconv}
Let ${\phi} \in (E)^*$ and ${\phi}\sim(\phi_0,\phi_1,\cdots,\phi_m,0,0,0,\cdots)$ be its Wiener-Itô chaos decomposition. Then the expression
\begin{equation}
\exp^\diamond({\phi}) := \sum\limits_{n=0}^\infty \frac{1}{n!} {\phi}^{\diamond n}
\end{equation}
converges in $(E)_\beta^*$, where $1>\beta\geq\frac{max(0,m-2)}{m}$.
\end{lemma}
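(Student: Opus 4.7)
The plan is to apply Theorem~\ref{generalconv} to the partial sums $\Psi_N := \sum_{n=0}^{N} \frac{1}{n!}\phi^{\diamond n}$, which all lie in $(E)_\beta^*$ because $\phi \in (E)^* \subset (E)_\beta^*$ and the Wick product is internal to $(E)_\beta^*$. Since the Wick product is defined through $\Psi\diamond\Theta := S^{-1}(S(\Psi)\cdot S(\Theta))$, the $S$-transform is multiplicative, and I would immediately obtain $F_N(\xi) := S(\Psi_N)(\xi) = \sum_{n=0}^{N} \frac{1}{n!}(S\phi(\xi))^n$. Pointwise convergence $F_N(\xi) \to \exp(S\phi(\xi))$ as $N\to\infty$ is then nothing more than the ordinary Taylor series of $\exp$, which settles condition (i) of Theorem~\ref{generalconv} for every $\xi \in E_\C$.

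Next, I would produce the uniform bound required by condition (ii). Because $\phi$ has a Wiener--It\^o chaos decomposition terminating at level $m$, one may choose $p \geq 0$ large enough that all norms $|\phi_k|_{-p}$ are finite for $k=0,\ldots,m$. Then $S\phi(\xi) = \sum_{k=0}^{m}\langle \phi_k,\xi^{\otimes k}\rangle$ is a polynomial in $\xi$ of degree at most $m$, and the crude estimate $|\langle \phi_k,\xi^{\otimes k}\rangle|\leq |\phi_k|_{-p}\,|\xi|_p^k$ delivers $|S\phi(\xi)|\leq C_1(1+|\xi|_p^m)$ for a constant $C_1$ depending only on $\phi$ and $p$. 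Bounding the partial sums of the exponential series by the exponential itself gives
\begin{equation*}
|F_N(\xi)| \leq \exp(|S\phi(\xi)|) \leq e^{C_1}\exp(C_1 |\xi|_p^m),
\end{equation*}
which is uniform in $N$.

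The final step is to reconcile this polynomial bound of degree $m$ with the exponential growth rate $2/(1-\beta)$ prescribed by the theorem. If $m \leq 2$, the threshold $\max(0,m-2)/m$ vanishes and $m \leq 2/(1-\beta)$ holds for every $\beta \in [0,1)$; for $m \geq 3$, the hypothesis $\beta \geq (m-2)/m$ is exactly equivalent to $m \leq 2/(1-\beta)$. In both cases $|\xi|_p^m \leq 1 + |\xi|_p^{2/(1-\beta)}$, so the estimate above upgrades to $|F_N(\xi)| \leq C \exp(K |\xi|_p^{2/(1-\beta)})$ for suitable $C,K \geq 0$ independent of $N$, and Theorem~\ref{generalconv} yields the strong convergence of $\exp^\diamond(\phi)$ in $(E)_\beta^*$. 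The only genuinely nontrivial point of the argument is precisely this matching of the polynomial degree $m$ of $S\phi$ with the admissible exponential growth rate, which is exactly what forces the lower bound $\beta \geq \max(0,m-2)/m$; the remaining steps are routine manipulations of the $S$-transform.
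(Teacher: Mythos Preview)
Your proof is correct and follows essentially the same route as the paper's: apply Theorem~\ref{generalconv} to the partial sums, verify pointwise convergence of the $S$-transforms via the exponential Taylor series, bound $|S\phi(\xi)|$ by a degree-$m$ polynomial in $|\xi|_p$, and then observe that the hypothesis $\beta \geq \max(0,m-2)/m$ is precisely what ensures $m \leq 2/(1-\beta)$ so that the growth condition (ii) is met. Your explicit verification of the equivalence between the lower bound on $\beta$ and the inequality $m \leq 2/(1-\beta)$ is in fact slightly more thorough than the paper's own argument.
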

\begin{proof}
We show that the conditions from Theorem \ref{generalconv} are fullfilled. Without loss of generality let $\phi_m \neq 0$.\\
Since ${\phi} \in (E)_\beta^*$ there exists $p > 0$ such that $\left|{\phi}\right|_{-p,\beta} < \infty$.
For $n\in\NZ$ and $\xi \in E_\C$ let $\Psi_n := \sum\limits_{k=0}^n \frac{1}{k!} {\phi}^{\diamond k}$ and $F_n(\xi) := \sum\limits_{k=0}^n \frac{1}{k!} \LL {\phi}^{\diamond k},\Phi_\xi\RR$.\\
Condition (i) from Theorem\ref{generalconv} is fulfilled since $F_n(\xi) = \sum\limits_{k=0}^n \frac{1}{k!} (S({\phi})(\xi))^k \rightarrow \exp(S({\phi})(\xi))$.\\
It follows $\left|F_n(\xi)\right| \leq  \exp(\left|S({\phi})(\xi)\right|)$.\\
Because for $r\geq min(2,m)$ we have
\begin{align*}
\left|S({\phi})(\xi)\right| &\leq\ \left|\phi_0\right|+ \cdots + \left| \langle\phi_m,\xi^{\otimes m}\rangle\right|\\
&\leq \left|\phi_0\right|+ \max\limits_{1\leq k\leq m}(\left|\phi_k\right|_{-p} )\cdot(m+\left|\xi\right|_{p}^m)\\
&\leq \left|\phi_0\right|+ \max\limits_{1\leq k\leq m}(\left|\phi_k\right|_{-p} )\cdot(m+1+\left|\xi\right|_{p}^r),
\end{align*}
 it follows that condition (ii) from Theorem \ref{generalconv} is fullfilled with $r = \frac{2}{1-\beta}$.
\end{proof}
The following is an immediate consequence of Lemma \ref{wickconv} and comparisation of the $S$-transforms.
\begin{corollary}\label{wickmult}
Let ${\phi},{\psi} \in (E)^*$ with corresponding chaos decompositions ${\phi} = (\phi_0,\phi_1,\cdots,\phi_m,0,0,0,...)$ and ${\psi} = (\psi_0,\psi_1,\cdots,\psi_m,0,0,0,...)$. Then the formula
\begin{equation*}
\exp^\diamond({\phi} +\psi) = \exp^\diamond({\phi}) \diamond \exp^\diamond({\psi})  
\end{equation*}
\end{corollary}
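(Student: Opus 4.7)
The plan is to verify the identity by applying the $S$-transform to both sides, reducing the claim to the elementary identity $\exp(u+v) = \exp(u)\exp(v)$ for complex numbers, and then invoking injectivity of $S$ on $(E)_\beta^*$.

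First, I would fix $\beta$ with $\max(0,m-2)/m \le \beta < 1$. Since $\phi$, $\psi$, and $\phi+\psi$ all have chaos decompositions of length at most $m$, Lemma \ref{wickconv} guarantees that $\exp^\diamond(\phi)$, $\exp^\diamond(\psi)$, and $\exp^\diamond(\phi+\psi)$ all converge in $(E)_\beta^*$. In particular, the partial sums $\Psi_n^\phi := \sum_{k=0}^n \tfrac{1}{k!}\phi^{\diamond k}$ converge to $\exp^\diamond(\phi)$ strongly in $(E)_\beta^*$, and similarly for $\psi$ and $\phi+\psi$.

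Next, I would compute $S$-transforms. Since the $S$-transform is continuous on $(E)_\beta^*$ and multiplicative with respect to the Wick product (by the very definition $\Psi\diamond\Theta = S^{-1}(S(\Psi)\cdot S(\Theta))$), one obtains
\begin{equation*}
S\bigl(\exp^\diamond(\phi)\bigr)(\xi) \;=\; \lim_{n\to\infty}\sum_{k=0}^{n}\frac{1}{k!}\,S(\phi)(\xi)^{k} \;=\; \exp\bigl(S(\phi)(\xi)\bigr),\qquad \xi\in E_\C,
\end{equation*}
and likewise for $\psi$ and $\phi+\psi$. Linearity of $S$ gives $S(\phi+\psi)=S(\phi)+S(\psi)$, so
\begin{equation*}
S\bigl(\exp^\diamond(\phi+\psi)\bigr)(\xi) \;=\; \exp\bigl(S(\phi)(\xi)+S(\psi)(\xi)\bigr) \;=\; \exp\bigl(S(\phi)(\xi)\bigr)\cdot\exp\bigl(S(\psi)(\xi)\bigr).
\end{equation*}

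The right-hand side is precisely $S(\exp^\diamond(\phi))(\xi)\cdot S(\exp^\diamond(\psi))(\xi)$. This product of $S$-transforms is the $S$-transform of an element of $(E)_\beta^*$ (namely $\exp^\diamond(\phi+\psi)$), so the Wick product $\exp^\diamond(\phi)\diamond\exp^\diamond(\psi)$ is well defined in $(E)_\beta^*$ and its $S$-transform coincides with that of $\exp^\diamond(\phi+\psi)$. Since $S$ is injective on $(E)_\beta^*$, the two sides agree. The only step requiring care is the well-posedness of the Wick product on the right-hand side, but this is automatic here because the product of $S$-transforms is explicitly identified as the $S$-transform of an object already shown to lie in $(E)_\beta^*$ via Lemma \ref{wickconv}; no additional growth estimate via Theorem \ref{generalconv} needs to be invoked separately.
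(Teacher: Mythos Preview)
Your proof is correct and is exactly the argument the paper has in mind: the paper's proof consists of the single sentence ``immediate consequence of Lemma \ref{wickconv} and comparisation of the $S$-transforms,'' and you have simply written this out in full. Your remark about well-posedness of the Wick product on the right-hand side is even slightly overcautious, since the paper records in the preliminaries that $\Psi\diamond\Theta\in(E)_\beta^*$ whenever $\Psi,\Theta\in(E)_\beta^*$.
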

is valid in $(E)_\beta^*$, for all $1>\beta\geq\frac{1}{m} \max(0,m-2)$.
\section{Wick multiplication and convolution operator}
%
The Wick multiplication operator and its dual, often denoted as convolution operator are well known objects in white noise theory, see e.g.~\cite{Obata4}.
We give a proof, that $(E)_\beta^*$ with the Wick product may be considered as a commutative sub-algebra of $L((E_\beta),(E_\beta))$. For a similar approach see  also \cite{Obata4}.\\

%
%
\begin{proposition}$\;$
\begin{enumerate}[(i)]
\item
Let $\varphi\in (E)^*_\beta$. The Wick multiplication operator $M_\varphi$, defined by $$M_\varphi(\psi) := \varphi\diamond \psi$$ is a well defined operator in $L((E)_\beta,(E)^*_\beta$).
\item
Let $\varphi\in (E)^*_\beta$. The Wick multiplication operator $\widetilde{M_\varphi}$, defined by $$\widetilde{M_\varphi}(\psi) := \varphi\diamond \psi$$ is a well defined continuous extension of $M_\varphi$ to an operator in $L((E)^*_\beta,(E)^*_\beta$).
\end{enumerate}
\end{proposition}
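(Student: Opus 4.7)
The plan is to reduce both statements to a growth estimate on the $S$-transform and then to apply Theorem~\ref{generalconv}. The crucial observation is that by the very definition $\varphi \diamond \psi = S^{-1}(S(\varphi) \cdot S(\psi))$, the question of whether $\varphi \diamond \psi$ lies in $(E)_\beta^*$ (and depends continuously on $\psi$) becomes a question about pointwise products of entire functions of exponential growth $\exp(K|\xi|_p^{2/(1-\beta)})$, a class that is plainly stable under multiplication.

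For part (i), fix $\varphi \in (E)_\beta^*$ and choose $p_0 \geq 0$ with $|\varphi|_{-p_0,-\beta}<\infty$. Standard estimates on the pairing with Wick exponentials give constants $C_\varphi, K_\varphi \geq 0$ such that
\begin{equation*}
|S(\varphi)(\xi)| \leq C_\varphi \exp\bigl(K_\varphi |\xi|_{p_0}^{2/(1-\beta)}\bigr), \qquad \xi \in E_\C.
\end{equation*}
For $\psi \in (E)_\beta$ one obtains analogous bounds for every $q \geq 0$, with a constant controlled by $|\psi|_{q,\beta}$. Multiplying the two estimates, $S(\varphi \diamond \psi) = S(\varphi)S(\psi)$ satisfies a bound of the same shape (with exponent indexed by $\max(p_0, q)$), so by the characterization underlying Theorem~\ref{generalconv}, $\varphi \diamond \psi \in (E)_\beta^*$. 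For continuity, let $\psi_n \to \psi$ in $(E)_\beta$; then $\{\psi_n\}$ is bounded in every $(E)_{q,\beta}$, which yields a uniform exponential bound on $\{S(\psi_n)\}$, and $S(\psi_n)(\xi) \to S(\psi)(\xi)$ pointwise because convergence in $(E)_\beta$ forces convergence in $(E)_\beta^*$. Multiplying by the fixed factor $S(\varphi)$ preserves both properties, so Theorem~\ref{generalconv} gives $\varphi \diamond \psi_n \to \varphi \diamond \psi$ strongly in $(E)_\beta^*$. Since $(E)_\beta$ is a Fréchet space, this sequential continuity is enough.

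For part (ii), the definition $\widetilde{M_\varphi}(\psi) = \varphi \diamond \psi$ is literally the same formula on the larger domain, so $\widetilde{M_\varphi}$ agrees with $M_\varphi$ on the subspace $(E)_\beta \subset (E)_\beta^*$. Well-definedness is handled as in part (i): if $|\psi|_{-q_0,-\beta}<\infty$, then $S(\psi)$ is of growth $C_\psi \exp(K_\psi |\xi|_{q_0}^{2/(1-\beta)})$, and the product $S(\varphi)S(\psi)$ is of the same type with $\max(p_0,q_0)$ in place of $p_0$. For continuity in the topology of $L((E)_\beta^*, (E)_\beta^*)$, I would use that $(E)_\beta^* = \indlim_{p \to -\infty}(E)_{p,-\beta}$ is a regular, bornological nuclear Silva space: every bounded subset $B$ of $(E)_\beta^*$ is already bounded in some step $(E)_{p_0,-\beta}$, so $\{S(\psi) : \psi \in B\}$ satisfies a uniform exponential growth bound, hence so does $\{S(\widetilde{M_\varphi}(\psi)) : \psi \in B\}$, and therefore $\widetilde{M_\varphi}(B)$ is bounded in $(E)_\beta^*$. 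A linear map between bornological spaces that sends bounded sets to bounded sets is continuous, which concludes (ii).

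The main obstacle is exactly this continuity step in (ii): while part (i) reduces transparently to Theorem~\ref{generalconv} (because $(E)_\beta$ is metrizable, so sequential continuity suffices), part (ii) requires tracking the norm indices through the product of $S$-transforms and invoking the structural properties of $(E)_\beta^*$. A concrete backup, should the abstract bornological argument feel unsatisfactory, is to verify the explicit seminorm estimate $|\widetilde{M_\varphi}(\psi)|_{-r,-\beta} \leq c\,|\varphi|_{-p_0,-\beta}\,|\psi|_{-q_0,-\beta}$ for $r$ sufficiently larger than $\max(p_0,q_0)$, using the chaos formula for the coefficients $h_n = \sum_{k=0}^n F_k \hat\otimes G_{n-k}$ of $\varphi \diamond \psi$, the inequality $\binom{n}{k}^{1-\beta} \leq 2^{n(1-\beta)}$, and a Cauchy--Schwarz step to absorb the resulting geometric factor into the gap between $r$ and $\max(p_0, q_0)$.
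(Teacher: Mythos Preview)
Your approach is correct but genuinely different from the paper's. The paper dispatches both parts in a few lines by direct citation: for (ii) it simply invokes \cite[Theorem 8.12]{Kuo2}, and for (i) it quotes the explicit seminorm inequality
\[
|\varphi\diamond\psi|_{-p-\alpha,-\beta}\leq c\,|\varphi|_{-p,-\beta}\,|\psi|_{-p,-\beta}\leq c\,|\varphi|_{-p,-\beta}\,|\psi|_{p,\beta}
\]
from the same reference, which immediately gives $M_\varphi\in L((E)_\beta,(E)_\beta^*)$. In other words, the paper's entire argument is precisely what you describe as your ``backup'' at the end.

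Your primary route via the $S$-transform characterization is sound, but note two points. First, Theorem~\ref{generalconv} as stated concerns \emph{convergence} of sequences, not boundedness of families; to conclude that $\widetilde{M_\varphi}(B)$ is bounded from a uniform growth estimate on $\{S(\psi):\psi\in B\}$ you really need the quantitative form of the characterization theorem (the norm index and constant of $S^{-1}(F)$ depend only on $C,K,p$), which is implicit in \cite{KLWS96+} but not literally Theorem~\ref{generalconv}. Second, the bornological machinery for (ii) is correct but heavier than necessary. What each approach buys: the paper's citation is shorter and yields an explicit operator-norm bound for free; your $S$-transform argument is more self-contained within the paper's own toolkit and makes transparent \emph{why} the Wick product preserves $(E)_\beta^*$, namely closure of the growth class under pointwise multiplication.
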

\begin{proof}
For (ii), see \cite[Theorem 8.12., p. 92]{Kuo2}.\\
To prove (i) we know by \cite[Theorem 8.12., see Remark, p.92]{Kuo2}, that for any $p\geq 0$ there exist suitable $\alpha>0,\ c>0$, such that
\begin{align*}
\left|\varphi \diamond \psi\right|_{-p-\alpha,-\beta}\ &\leq\ c\cdot\left|\varphi\right|_{-p,-\beta}\cdot \left|\psi\right|_{-p,-\beta}\\
                                             &\leq\ c\cdot\left|\varphi\right|_{-p,-\beta}\cdot \left|\psi\right|_{p,\beta}
\end{align*}
Thus $\varphi \diamond (\cdot) \in L((E)_\beta,(E)_\beta^*)$.
\end{proof}
\begin{definition}\label{def_conv_op}
The dual operator of the Wick operator $\widetilde{M_\varphi}$ is called \textbf{convolution operator} and denoted by $C_\varphi$.
\end{definition}
The next proposition gives the Fock expansion of the Wick multiplication operator.
\begin{proposition}\label{wick_sum_op}
Let $\varphi\in(E)_\beta^*$ and $\varphi\sim (\varphi_1,\varphi_2,\cdots)$. Then $M_\varphi$, considered as $M_\varphi\in L((E)_\beta,(E)_\beta^*$), has the following Fock expansion:
\begin{equation*}
M_\varphi = \sum\limits_{n\in\NZ} \Xi_{n,0}(\varphi_n).
\end{equation*}
\end{proposition}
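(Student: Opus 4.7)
The plan is to identify both sides of the claimed Fock expansion by comparing their operator symbols on exponential vectors, using the fact that the correspondence $\Xi \mapsto \widehat{\Xi}$ is injective on $L((E)_\beta,(E)_\beta^*)$. Since the right-hand side is a candidate Fock expansion (only $(l,m)$-terms with $m=0$), uniqueness of the Fock expansion then yields the claim.

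First I would compute the symbol of the left-hand side. For $\xi,\eta\in E_\C$, using the definitional property $S(\Psi)(\eta)=\LL\Phi_\eta,\Psi\RR$ together with the characterization of the Wick product via the $S$-transform, one obtains
\begin{equation*}
\widehat{M_\varphi}(\xi,\eta)
=\LL\varphi\diamond\Phi_\xi,\Phi_\eta\RR
=S(\varphi\diamond\Phi_\xi)(\eta)
=S(\varphi)(\eta)\cdot e^{\langle\xi,\eta\rangle}
=\Bigl(\sum_{n=0}^\infty\langle\varphi_n,\eta^{\otimes n}\rangle\Bigr)\,e^{\langle\xi,\eta\rangle}.
\end{equation*}

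Next I would compute the symbol of each summand on the right. Since $\Phi_\xi\sim(\xi^{\otimes k}/k!)_{k\in\NZ}$, applying the definition of the integral kernel operator with $m=0$ and then the pairing \eqref{def_duality} gives, after re-indexing $m=n+k$,
\begin{equation*}
\widehat{\Xi_{n,0}(\varphi_n)}(\xi,\eta)
=\sum_{m\geq n}m!\,\Bigl\langle\varphi_n\,\widehat\otimes\,\tfrac{\xi^{\otimes(m-n)}}{(m-n)!},\,\tfrac{\eta^{\otimes m}}{m!}\Bigr\rangle
=\langle\varphi_n,\eta^{\otimes n}\rangle\,e^{\langle\xi,\eta\rangle}.
\end{equation*}
Summing over $n$ reproduces the symbol $\widehat{M_\varphi}(\xi,\eta)$ computed above, so the symbols of $M_\varphi$ and of $\sum_{n\in\NZ}\Xi_{n,0}(\varphi_n)$ agree on all pairs $(\xi,\eta)\in E_\C\times E_\C$.

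The remaining, and main, obstacle is to justify that the series $\sum_{n\in\NZ}\Xi_{n,0}(\varphi_n)$ actually converges in $L((E)_\beta,(E)_\beta^*)$ (with the topology of bounded convergence introduced in \eqref{def_boundedconv}), so that its symbol is indeed the sum of the individual symbols. For this I would apply $\sum_{n=0}^N\Xi_{n,0}(\varphi_n)$ to an arbitrary test function $\psi\sim(\psi_k)\in(E)_\beta$ and check convergence of the resulting chaos decomposition in $(E)_\beta^*$ via Theorem \ref{generalconv}; the partial sums send $\psi$ to $\sum_{n\leq N}\sum_{k\geq 0}\langle :x^{\otimes(n+k)}:,\varphi_n\widehat\otimes\psi_k\rangle$, whose $S$-transform at $\eta$ equals the truncated product $\bigl(\sum_{n\leq N}\langle\varphi_n,\eta^{\otimes n}\rangle\bigr)\,S(\psi)(\eta)$, and the exponential-type bound needed in condition (ii) follows from $\varphi\in(E)_\beta^*$. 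Equicontinuity on bounded subsets of $(E)_\beta$ is then inherited from the estimates on $\Xi_{n,0}(\varphi_n)$ in the style of \cite[Ch.~10]{Kuo2}. Having both convergence and agreement of symbols, the uniqueness of the Fock expansion for operators in $L((E)_\beta,(E)_\beta^*)$ (see \cite[Ch.~4]{Obata1}) identifies $M_\varphi$ with $\sum_{n\in\NZ}\Xi_{n,0}(\varphi_n)$, completing the proof.
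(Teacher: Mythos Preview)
Your proof is correct and follows essentially the same approach as the paper: compute the operator symbol $\widehat{M_\varphi}(\xi,\eta)=S(\varphi)(\eta)\,e^{\langle\xi,\eta\rangle}$ and recognize from its Taylor expansion in $(\xi,\eta)$ that the Fock expansion consists only of creation-type kernels $\Xi_{n,0}(\varphi_n)$. The paper's proof is more terse, simply invoking the correspondence between the Taylor coefficients of the symbol and the Fock expansion kernels (via \cite[(10.21)]{Kuo2} and \cite[Proposition~4.5.3]{Obata1}) rather than computing the symbol of each $\Xi_{n,0}(\varphi_n)$ separately and checking convergence; your added convergence discussion is a welcome elaboration but not a different route.
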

\begin{proof}
Let $\xi, \eta\in E_\C$. Then
\begin{align*}
\LL M_\varphi \Phi_\xi,\Phi_\eta \RR &= S(\varphi\diamond \Phi_\xi)(\eta) = S(\varphi)(\eta)\cdot S(\Phi_\xi)(\eta)\\
& = S(\varphi)(\eta)\cdot e^{\left\langle \xi,\eta\right\rangle}\\
\end{align*}
By \cite[(10.21) p. 145]{Kuo2} the Taylor expansion of $M_\varphi$ is given by
\begin{equation*}
S(\varphi)(\eta)=\sum\limits_{n\in\NZ} \left\langle \varphi_n,\eta^{\otimes n}\otimes \xi^0\right\rangle
\end{equation*}
completes the proof, compare also \cite[Proposition 4.5.3, p.98-99]{Obata1}.
\end{proof}
\begin{proposition}\label{conv_sum_op}
Let $\varphi\in (E)_\beta^*$ and $\varphi\sim (\varphi_1,\varphi_2,\cdots)$. The convolution operator $C_\varphi$ is in $L((E)_\beta,(E)_\beta)$ and has the Fock expansion
\begin{equation*}
C_\varphi = \sum\limits_{n\in\NZ} \Xi_{0,n}(\varphi_n).
\end{equation*}
\end{proposition}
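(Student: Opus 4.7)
The plan is to first secure $C_\varphi \in L((E)_\beta,(E)_\beta)$ by duality and then to identify the expansion by comparing operator symbols. Since by Definition~\ref{def_conv_op} the operator $C_\varphi$ is the dual of $\widetilde{M_\varphi}\in L((E)_\beta^*,(E)_\beta^*)$, and since $(E)_\beta$ is a nuclear Fréchet space densely embedded in $(E)_\beta^*$, Proposition~\ref{dual_contin} (applied with $X=(E)_\beta$) guarantees that the transposition map $T\mapsto T^*$ is a topological isomorphism $L((E)_\beta,(E)_\beta)\to L((E)_\beta^*,(E)_\beta^*)$. The unique preimage of $\widetilde{M_\varphi}$ under this isomorphism is $C_\varphi$, so the continuity assertion $C_\varphi \in L((E)_\beta,(E)_\beta)$ follows for free.

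To pin down the Fock expansion I would compute the operator symbol. For $\xi,\eta\in E_\C$, both $\Phi_\xi$ and $\Phi_\eta$ lie in $(E)_\beta\subset (L^2)$, so the canonical bilinear form is symmetric on them. Using this symmetry, the duality defining $C_\varphi$, and the fact that $\widetilde{M_\varphi}$ extends $M_\varphi$, one obtains
\begin{equation*}
\widehat{C_\varphi}(\xi,\eta)=\LL C_\varphi\Phi_\xi,\Phi_\eta\RR=\LL\Phi_\eta,C_\varphi\Phi_\xi\RR=\LL\widetilde{M_\varphi}\Phi_\eta,\Phi_\xi\RR=\LL M_\varphi\Phi_\eta,\Phi_\xi\RR.
\end{equation*}
The computation carried out in the proof of Proposition~\ref{wick_sum_op}, with the roles of $\xi$ and $\eta$ interchanged, evaluates this to $S(\varphi)(\xi)\cdot e^{\langle\xi,\eta\rangle}=\sum_{n\in\NZ}\langle\varphi_n,\xi^{\otimes n}\rangle e^{\langle\xi,\eta\rangle}$, and each summand is exactly $\widehat{\Xi_{0,n}(\varphi_n)}(\xi,\eta)$. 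Since $\varphi_n\in(E_\C^{\otimes n})^*$, the remark following the definition of integral kernel operators places each $\Xi_{0,n}(\varphi_n)$ in $L((E)_\beta,(E)_\beta)$, and the injectivity of the symbol map then identifies $C_\varphi$ with $\sum_{n\in\NZ}\Xi_{0,n}(\varphi_n)$.

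I anticipate the main subtlety to be the sense in which the series converges: the symbol calculation above is only a formal identity, so closing the argument requires either invoking the standard Fock-expansion characterization for operators in $L((E)_\beta,(E)_\beta)$ (one-to-one correspondence between an operator and its symbol, cf.\ the kernel-theorem-type results in \cite{Obata1}) or verifying termwise convergence in the topology of bounded convergence using the bounds on $|\varphi_n|_{-p}$ coming from $\varphi\in(E)_\beta^*$. The symbol-uniqueness route is the cleanest and parallels the treatment of Proposition~\ref{wick_sum_op}.
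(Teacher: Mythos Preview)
Your argument is correct, but it takes a different route from the paper's proof. Both start the same way: $C_\varphi=\widetilde{M_\varphi}^*\in L((E)_\beta,(E)_\beta)$ by reflexivity (the paper states this in one line; your appeal to Proposition~\ref{dual_contin} is more elaborate than necessary but yields the same conclusion). The divergence is in the identification of the expansion. The paper simply takes the adjoint of the Fock expansion from Proposition~\ref{wick_sum_op} termwise, using the standard duality $\Xi_{n,0}(\varphi_n)^*=\Xi_{0,n}(\varphi_n)$, and is done in one line:
\[
C_\varphi=M_\varphi^*=\Bigl(\sum_{n\in\NZ}\Xi_{n,0}(\varphi_n)\Bigr)^*=\sum_{n\in\NZ}\Xi_{0,n}(\varphi_n).
\]
You instead recompute the operator symbol $\widehat{C_\varphi}(\xi,\eta)=S(\varphi)(\xi)e^{\langle\xi,\eta\rangle}$ by flipping the pairing through the duality, recognize the summands as $\widehat{\Xi_{0,n}(\varphi_n)}$, and invoke injectivity of the symbol map. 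Your approach is self-contained in that it does not presuppose the adjoint formula for integral kernel operators, at the cost of needing the characterization-theorem machinery to close the identification; the paper's approach is shorter and purely algebraic once that adjoint formula is granted. Either way the convergence issue you flag is handled by the general Fock-expansion theory for operators in $L((E)_\beta,(E)_\beta)$, so there is no gap.
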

\begin{proof}
By definition we have $C_\varphi=\widetilde{M_\varphi}^* \in L((E)_\beta,(E)_\beta)$. Moreover ${M_\varphi}^*\big|(E)_\beta = C_\varphi$, since $\widetilde{M_\varphi}$ is an extension of $M_\varphi$.\\
Then, by Proposition \ref{wick_sum_op} we have
\begin{equation*}
C_\varphi={M_\varphi}^* = \left(\sum\limits_{n\in\NZ} \Xi_{n,0}(\varphi_n)\right)^*=\sum\limits_{n\in\NZ} \Xi_{0,n}(\varphi_n).
\end{equation*} 
\end{proof}
The following statement is an immediate consequence from the definition of integral kernel operators.
\begin{corollary}\label{eigenstate}
Let $\varphi\in (E)_\beta^*$ and $\varphi\sim (\varphi_1,\varphi_2,\cdots)$. Then
\begin{equation*}
\ C_\varphi(\Phi_\xi)= \left[S(\varphi)(\xi)\right]\Phi_\xi,\quad \xi\in E_\C:
\end{equation*}
\end{corollary}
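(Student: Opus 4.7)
The plan is to exploit the Fock expansion $C_\varphi = \sum_{n\in\NZ} \Xi_{0,n}(\varphi_n)$ from Proposition \ref{conv_sum_op} and evaluate term by term on the exponential vector, whose chaos decomposition $\Phi_\xi \sim \bigl(\frac{1}{n!}\xi^{\otimes n}\bigr)_{n\in\NZ}$ is explicit. The computation should then telescope into the product of $S(\varphi)(\xi)$ with $\Phi_\xi$.

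Concretely, I would fix $n\in\NZ$ and apply the defining formula of the integral kernel operator $\Xi_{0,n}(\varphi_n)$ to $\Phi_\xi$, obtaining
\begin{equation*}
\Xi_{0,n}(\varphi_n)(\Phi_\xi) = \sum_{k=0}^{\infty} \frac{(k+n)!}{k!}\left\langle :x^{\otimes k}:,\ \varphi_n\otimes_n \tfrac{1}{(k+n)!}\xi^{\otimes (k+n)}\right\rangle.
\end{equation*}
The key observation is that the right contraction acts as $\varphi_n\otimes_n \xi^{\otimes(k+n)}=\left\langle \varphi_n,\xi^{\otimes n}\right\rangle\xi^{\otimes k}$, a scalar multiple of $\xi^{\otimes k}$. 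After cancelling $(k+n)!$ this reduces the inner sum to $\left\langle \varphi_n,\xi^{\otimes n}\right\rangle \sum_{k\geq 0}\frac{1}{k!}\left\langle :x^{\otimes k}:,\xi^{\otimes k}\right\rangle=\left\langle \varphi_n,\xi^{\otimes n}\right\rangle\Phi_\xi$. Summing over $n$ and recognising $\sum_n\left\langle \varphi_n,\xi^{\otimes n}\right\rangle= S(\varphi)(\xi)$ by the $S$-transform formula applied to $\Phi_\xi\sim(\xi^{\otimes n}/n!)$ and the duality \eqref{def_duality} yields the claim.

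I expect no real obstacle here; the only point requiring some care is justifying the interchange of the outer sum over $n$ with the action of $C_\varphi$, which is already built into the convergence statement of Proposition \ref{conv_sum_op}. If a cleaner presentation is preferred, one can instead compute
\begin{equation*}
S(C_\varphi\Phi_\xi)(\eta)=\LL\Phi_\eta,C_\varphi\Phi_\xi\RR =\LL\widetilde{M_\varphi}\Phi_\eta,\Phi_\xi\RR=S(\varphi\diamond\Phi_\eta)(\xi)=S(\varphi)(\xi)\,e^{\left\langle \eta,\xi\right\rangle},
\end{equation*}
recognise the right-hand side as $S\bigl(S(\varphi)(\xi)\,\Phi_\xi\bigr)(\eta)$, and invoke injectivity of the $S$-transform to conclude. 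Either route is essentially a one-line computation, as the paper's remark that the statement is ``an immediate consequence'' suggests.
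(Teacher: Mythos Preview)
Your proposal is correct, and your primary route---evaluating each $\Xi_{0,n}(\varphi_n)$ on $\Phi_\xi$ via the defining formula of integral kernel operators and using $\varphi_n\otimes_n\xi^{\otimes(k+n)}=\langle\varphi_n,\xi^{\otimes n}\rangle\,\xi^{\otimes k}$---is exactly what the paper has in mind when it says the statement is ``an immediate consequence from the definition of integral kernel operators.'' Your alternative $S$-transform argument is equally valid and arguably cleaner, but it is not the route the paper points to.
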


\begin{lemma}\label{expect2}
Let $\Xi \in L((E)_\beta,(E)_\beta^*)$ and $\Xi = \sum\limits_{l,m=0}^{\infty} \Xi_{l,m}(\kappa_{l,m})$ be the corresponding unique representation as sum of integral kernel operators. Then:
\begin{equation}
\Xi(\Phi_0) = \sum\limits_{l=0}^{\infty} \left\langle :x^{\otimes l}:, \kappa_{l,0}\right\rangle
\end{equation} 
\end{lemma}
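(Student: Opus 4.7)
The plan is to compute $\Phi_0$ explicitly in its Wiener--Itô chaos decomposition and then to plug it term-by-term into the given Fock expansion of $\Xi$.

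First I would observe that the exponential vector $\Phi_\xi$ at $\xi=0$ collapses: since $\xi^{\otimes n}=0$ for $n\ge 1$ and only the $n=0$ term survives, we have $\Phi_0\sim (1,0,0,\ldots)$, i.e.\ the constant function $1$ in $(L^2)$. This is the key structural input: $\Phi_0$ has only a scalar zeroth-chaos component.

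Next I would apply the defining formula for an integral kernel operator to $\Phi_0$. With $\varphi=\Phi_0\sim (f_n)$ where $f_0=1$ and $f_n=0$ for $n\ge 1$, the definition
\begin{equation*}
\Xi_{l,m}(\kappa_{l,m})(\Phi_0) = \sum_{n=0}^{\infty} \frac{(n+m)!}{n!}\,\bigl\langle :x^{\otimes (l+n)}:,\ \kappa_{l,m}\otimes_m f_{n+m}\bigr\rangle
\end{equation*}
is nonzero only when $f_{n+m}\neq 0$, which forces $n+m=0$, i.e.\ $n=m=0$. Therefore $\Xi_{l,m}(\kappa_{l,m})(\Phi_0)=0$ whenever $m\ge 1$, while for $m=0$ only the $n=0$ term contributes and yields $\langle :x^{\otimes l}:,\kappa_{l,0}\rangle$, since the $0$-contraction with the scalar $f_0=1$ is trivial.

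Finally I would sum over $(l,m)$, using the continuity of $\Xi$ to interchange application of $\Xi$ with the (convergent in $L((E)_\beta,(E)_\beta^*)$) Fock expansion. All terms with $m\ge 1$ drop out, leaving
\begin{equation*}
\Xi(\Phi_0)=\sum_{l=0}^{\infty}\Xi_{l,0}(\kappa_{l,0})(\Phi_0)=\sum_{l=0}^{\infty}\bigl\langle :x^{\otimes l}:,\kappa_{l,0}\bigr\rangle,
\end{equation*}
which is the claimed identity and converges in $(E)_\beta^*$ because $\Xi(\Phi_0)\in(E)_\beta^*$. The only point requiring any care, and thus the main (minor) obstacle, is justifying the term-by-term evaluation of $\Xi(\Phi_0)$ from the Fock expansion; this follows from the convergence of $\sum_{l,m}\Xi_{l,m}(\kappa_{l,m})$ in $L((E)_\beta,(E)_\beta^*)$ equipped with the topology of bounded convergence used in the preceding sections, together with continuity of the evaluation map at the fixed test function $\Phi_0\in(E)_\beta$.
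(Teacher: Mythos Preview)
Your proposal is correct and follows essentially the same route as the paper's proof: both identify $\Phi_0\sim(1,0,0,\ldots)$, plug this into the defining formula for $\Xi_{l,m}(\kappa_{l,m})$, and observe that only the terms with $m=0$ (and $n=0$) survive. Your version is slightly more explicit about why the $m\ge 1$ terms vanish and about the legitimacy of term-by-term evaluation of the Fock expansion at $\Phi_0$, points the paper passes over in silence.
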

\begin{proof}
Consider the equation
\begin{equation}
\Xi_{l,m}(\kappa_{l,m})(\varphi) = \sum\limits_{n=0}^{\infty} \frac{(n+m)!}{n!} \left\langle :x^{\otimes (l+n)}:, \kappa_{l,m} {\hat\otimes}_m f_{n+m}\right\rangle, \quad \varphi \sim (f_n) \in (E)_\beta,
\end{equation} 
then for $\Phi_0 \sim (f_n)$ with $f_0 = 1$ and $f_n = 0$ for all $n \geq 1$:
\begin{equation*}
\Xi(\Phi_0) =\sum\limits_{l=0}^{\infty} \left\langle :x^{\otimes l}:, \kappa_{l,0}\ \hat\otimes_0\  1\right\rangle
\end{equation*}
\end{proof}

\begin{theorem}\label{subalgebra1}
Let $0\leq \beta<1$.
The mapping
$$
C: ((E)_\beta^*,+,.,\diamond) \ to (L((E)_\beta,(E)_\beta),+,.,\circ)\quad
              \varphi \mapsto\ \mathscr{C}_{\varphi} 
$$
is an injective continous vector algebra homomorphism.
In particular $C$ is a topological isomorphism from $(E)_\beta^*$ to $\mathrm{Im}(C)$.\\
\end{theorem}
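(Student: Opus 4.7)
I split the claim into five sub-assertions: linearity of $C$, the algebra homomorphism property $C_{\varphi\diamond\psi}=C_\varphi\circ C_\psi$, injectivity, continuity of $C$, and continuity of $C^{-1}\colon\mathrm{Im}(C)\to(E)^*_\beta$. Linearity is immediate from the $\C$-bilinearity of $\diamond$ and the linearity of the dualization $T\mapsto T^*$. The algebraic heart of the argument is multiplicativity, which I verify on the total family of exponential vectors; the topological half reduces to two applications of Proposition \ref{dual_contin}.

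For multiplicativity the eigenstate formula of Corollary \ref{eigenstate} is the lever. For every $\xi\in E_\C$,
\begin{equation*}
(C_\varphi\circ C_\psi)(\Phi_\xi)=S(\psi)(\xi)\,C_\varphi(\Phi_\xi)=S(\varphi)(\xi)\,S(\psi)(\xi)\,\Phi_\xi=S(\varphi\diamond\psi)(\xi)\,\Phi_\xi=C_{\varphi\diamond\psi}(\Phi_\xi),
\end{equation*}
the penultimate equality being the very definition $S(\varphi\diamond\psi)=S(\varphi)\cdot S(\psi)$. Since the linear span of $\{\Phi_\xi:\xi\in E_\C\}$ is dense in $(E)_\beta$ and both sides lie in $L((E)_\beta,(E)_\beta)$ by Proposition \ref{conv_sum_op}, the identity extends to all of $(E)_\beta$. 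Injectivity follows along the same track: $C_\varphi=0$ forces $S(\varphi)(\xi)\Phi_\xi=0$ for all $\xi$, whence $S(\varphi)\equiv 0$, and the injectivity of the $S$-transform on $(E)^*_\beta$ gives $\varphi=0$.

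For the continuity of $C$, observe that the Wick estimate $|\varphi\diamond\eta|_{-p-\alpha,-\beta}\leq c\,|\varphi|_{-p,-\beta}\,|\eta|_{-p,-\beta}$ from the proof of the preceding proposition makes $\varphi\mapsto\widetilde M_\varphi$ continuous from $(E)^*_\beta$ into $L((E)^*_\beta,(E)^*_\beta)$. Since $(E)_\beta\subset(E)^*_\beta$ is dense (via the $(L^2)$-embedding), Proposition \ref{dual_contin} asserts that $T\mapsto T^*$ is a topological isomorphism from $L((E)_\beta,(E)_\beta)$ onto $L((E)^*_\beta,(E)^*_\beta)$; composing its (continuous) inverse, which by reflexivity sends $\widetilde M_\varphi$ to $C_\varphi=(\widetilde M_\varphi)^*$, with $\varphi\mapsto\widetilde M_\varphi$ yields the continuity of $C$.

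For the continuous inverse I exploit that $\Phi_0\in(E)_\beta\subset(E)^*_\beta$ is the unit of $(E)^*_\beta$ under $\diamond$ (its $S$-transform is $1$). By reflexivity of $(E)_\beta$ (nuclear Fréchet, hence Montel), $C_\varphi^{\,*}=(\widetilde M_\varphi)^{**}=\widetilde M_\varphi$, so
\begin{equation*}
C_\varphi^{\,*}(\Phi_0)=\widetilde M_\varphi(\Phi_0)=\varphi\diamond\Phi_0=\varphi,
\end{equation*}
giving the explicit formula $C^{-1}(T)=T^*(\Phi_0)$ on $\mathrm{Im}(C)$. This is the restriction of a continuous map $L((E)_\beta,(E)_\beta)\to(E)^*_\beta$, since $T\mapsto T^*$ is continuous by Proposition \ref{dual_contin} and $S\mapsto S(\Phi_0)$ is nothing but the seminorm on $L((E)^*_\beta,(E)^*_\beta)$ indexed by the bounded singleton $\{\Phi_0\}$. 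The main point requiring care is the density of $(E)_\beta$ in $(E)^*_\beta$, since this is what activates the \emph{topological isomorphism} part of Proposition \ref{dual_contin} used in the continuity step for $C$; once this and the reflexivity identifications $C_\varphi^{\,*}=\widetilde M_\varphi$, $(\widetilde M_\varphi)^{**}=\widetilde M_\varphi$ are in place, the five sub-claims assemble into the stated topological isomorphism from $(E)^*_\beta$ onto $\mathrm{Im}(C)$.
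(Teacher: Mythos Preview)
Your proof is correct and the algebraic parts (multiplicativity via Corollary \ref{eigenstate}, the inverse formula $C^{-1}(T)=T^*(\Phi_0)$) match the paper's exactly. The genuine difference is in the continuity of $C$. The paper proves it by a direct Fock-space estimate: it bounds each $\left|\Xi_{0,m}(\varphi_m)(\phi)\right|_{p,\beta}$ via \cite[Theorem 10.5]{Kuo2}, inserts a geometric factor $(2^{(1-\beta)/2}\rho^r)^m$, and sums by Cauchy--Schwarz to get $\|\mathscr C_\varphi\|_{M,\{p,\beta\}}\leq K\cdot\mathrm{const}(M)\cdot|\varphi|_{-(p+q),-\beta}$. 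You instead bootstrap from the Wick estimate $|\varphi\diamond\eta|_{-p-\alpha,-\beta}\leq c\,|\varphi|_{-p,-\beta}|\eta|_{-p,-\beta}$: this makes $\varphi\mapsto\widetilde M_\varphi$ continuous into $L((E)^*_\beta,(E)^*_\beta)$, and then you pull back through the topological isomorphism $^*\colon L((E)_\beta,(E)_\beta)\to L((E)^*_\beta,(E)^*_\beta)$ of Proposition \ref{dual_contin}. Your route is shorter and more conceptual, reusing an estimate already recorded in the paper rather than importing a new one from \cite{Kuo2}; the paper's route is self-contained at the level of kernel operators and yields an explicit bound with identifiable constants. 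Your injectivity argument (via $S(\varphi)\equiv 0$) is also a slight variant of the paper's (which uses the inverse formula $\mathscr C_\varphi^*(\Phi_0)=\varphi$ directly), but both are immediate.
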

\begin{proof}
Linearity is given by the definition of integral kernel operators. In order to prove homomorphy, let $\xi\in E_\C,\ \varphi_1,\varphi_2\in(E)_\beta^*$. Then by Corollary \ref{eigenstate}
\begin{align*}
C_{\varphi_1} \circ C_{\varphi_2} (\Phi_\xi) &= S(\varphi_1)(\xi)\cdot S(\varphi_2)(\xi) \cdot \Phi_\xi\\
&= S(\varphi_1\diamond\varphi_2)(\xi) \cdot \Phi_\xi= C_{\varphi_1\diamond \varphi_2} (\Phi_\xi)
\end{align*}
For injectivity let $\varphi \in (E)_\beta^*$ and $\mathscr{C}_\varphi = 0$. Then  ${\mathscr{C}_\varphi}^* = 0$ and by \ref{conv_sum_op} and \ref{expect2} we obtain $\varphi = {\mathscr{C}_\varphi}^*(\Phi_0)=0$.\\
To prove continuity let $\varphi \in (E)_\beta^*$ and $\varphi:=\sum\limits_{n\in\NZ} \left\langle :x^{\otimes n}:,\varphi_n\right\rangle$ be the corresponding chaos decomposition.
Now let $p \geq 0$. Moreover let $r>0$, such that $$(2^{\frac{1-\beta}{^2}}\rho^{r}<1),$$ where $\rho := \left\|A ^{-1}\right\|_{OP}$. Choose $q>0$ such that $\rho^{-q}\geq 2$. Then by \cite[Theorem 10.5, p. 128]{Kuo2} we have for all $\phi \in (E)_\beta$ and $m \in \NZ$
\begin{align*}
\left|\Xi_{0,m}(\varphi_m)(\phi)\right|_{p,\beta}
&\leq\ (m!2^m)^{\frac{1-\beta}{2}}\cdot \left|\varphi_m\right|_{-(p+q+r)}\cdot \left|\phi\right|_{(p+q+r),\beta}\\
&\leq\ m!^{\frac{1-\beta}{2}}\left|\varphi_m\right|_{-(p+q)}  (2^{\frac{1-\beta}{2}}\rho^{r})^m \cdot \left|\phi\right|_{(p+q+r),\beta}\\
\end{align*}
Now let $K:=(\sum\limits_{m\in\NZ}(2^{\frac{1-\beta}{2}}\rho^{r})^{2m})^\halb$.
Then by Cauchy-Schwartz we obtain
\begin{align*}
\sum\limits_{m\in\NZ}\left|\Xi_{0,m}(\varphi_m)(\phi)\right|_{p,\beta}
&\leq\ \sum\limits_{m\in\NZ}(m!^{\frac{1-\beta}{2}}\left|\varphi_m\right|_{-(p+q)}  (2^{\frac{1-\beta}{2}}\rho^{r})^m\cdot \left|\phi\right|_{(p+q+r),\beta}\\
 &\leq\ K\left|\varphi\right|_{-(p+q),\beta} \cdot \left|\phi\right|_{(p+q+r),\beta}\\
\end{align*}
Now let $M \subset (E)_\beta$ be bounded. Then $\sup\limits_{\phi\in M}\left|\phi\right|_{p+q+r,\beta} \leq \mathrm{const}(M,p,q,r) < \infty$ and finally $\left\|\mathscr{C}_{\varphi}\right\|_{M,\left\{p,\beta\right\}}\leq K\mathrm{const}(M,p,q,r) \cdot \left|\varphi\right|_{-(p+q),\beta}$\\
To prove the last claim, we use Proposition \ref{dual_contin} and and represent $C^{-1}$ as composition of the two continuous mappings:
\begin{equation*}
C^{-1}: \mathscr{C}_{\varphi} \stackrel{(.)^*}{\longmapsto} {\mathscr{C}_\varphi}^* \mathop{\longmapsto}\limits^\text{point}_\text{evaluation}  {\mathscr{C}_\varphi}^*(\Phi_0) = \varphi.
\end{equation*} 
\end{proof}
The following statement is a consequence from Lemma \ref{wickconv} and Theorem \ref{subalgebra1}.
\begin{corollary}\label{expo}
Let $\varphi \in (E)^*$ and $\varphi\sim(\varphi_0,\varphi_1,\cdots,\varphi_m,0,0,0,\cdots)$ be it's chaos decomposition. Then the expression
\begin{equation}
\exp(\mathscr{C}_{\varphi}) := \sum\limits_{n=0}^\infty \frac{1}{n!} (\mathscr{C}_{\varphi})^n
\end{equation}
converges in $L((E)_\beta,(E)_\beta)$, where $1>\beta\geq \max(0,\frac{m-2}{m})$.
\end{corollary}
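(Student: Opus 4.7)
The plan is to reduce the convergence in $L((E)_\beta,(E)_\beta)$ to the convergence of the Wick exponential $\exp^\diamond(\varphi)$ in $(E)_\beta^*$, which is already handled by Lemma \ref{wickconv}. The bridge is the continuous algebra homomorphism $C$ of Theorem \ref{subalgebra1}.

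First I would observe that because $C:((E)_\beta^*,\diamond)\to(L((E)_\beta,(E)_\beta),\circ)$ is an algebra homomorphism, an easy induction gives $(\mathscr{C}_\varphi)^n = \mathscr{C}_{\varphi^{\diamond n}}$ for every $n\in\NZ$. Combined with the linearity of $C$, this yields the key identity
\begin{equation*}
\sum_{n=0}^{N}\frac{1}{n!}(\mathscr{C}_\varphi)^n \;=\; \sum_{n=0}^{N}\frac{1}{n!}\mathscr{C}_{\varphi^{\diamond n}} \;=\; C\!\left(\sum_{n=0}^{N}\frac{1}{n!}\varphi^{\diamond n}\right),\qquad N\in\NZ.
\end{equation*}

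Next I would invoke Lemma \ref{wickconv}: since $\varphi\in(E)^*$ has finite chaos decomposition of length $m$, the partial sums $\sum_{n=0}^{N}\frac{1}{n!}\varphi^{\diamond n}$ converge strongly in $(E)_\beta^*$ to $\exp^\diamond(\varphi)$ for every $\beta$ satisfying $1>\beta\geq\max(0,(m-2)/m)$. By Theorem \ref{subalgebra1}, $C$ is continuous from $(E)_\beta^*$ (with its strong dual topology) into $L((E)_\beta,(E)_\beta)$ (with the topology of bounded convergence), so applying $C$ to the convergent sequence of partial sums yields convergence of
\begin{equation*}
\sum_{n=0}^{\infty}\frac{1}{n!}(\mathscr{C}_\varphi)^n \;=\; C(\exp^\diamond(\varphi))\;\in\; L((E)_\beta,(E)_\beta),
\end{equation*}
which is the desired statement.

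The only subtle step is the passage from convergence of partial sums in $(E)_\beta^*$ to convergence of their images under $C$, and this is exactly what the continuity half of Theorem \ref{subalgebra1} provides; no extra estimate is needed. Everything else is essentially formal manipulation of the homomorphism property.
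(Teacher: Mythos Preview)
Your proof is correct and follows exactly the approach the paper intends: the paper simply states that the corollary is ``a consequence from Lemma \ref{wickconv} and Theorem \ref{subalgebra1}'', and you have spelled out precisely how---using the homomorphism property of $C$ to rewrite the partial sums as $C\bigl(\sum_{n=0}^{N}\tfrac{1}{n!}\varphi^{\diamond n}\bigr)$, then invoking the convergence from Lemma \ref{wickconv} and the continuity of $C$ from Theorem \ref{subalgebra1}.
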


\section{Regular one parameter groups}
\noindent
Let $X$ a nuclear (F)-space  over $\C$ and $(\left|\cdot\right|_n)_{n\in\N}$ be a family of Hilbertian semi-norms, with $\left|\cdot\right|_{n} \leq \left|\cdot\right|_{n+1}$ for all $n\in\N$, topologizing $X$,  see e.g.~\cite[Proposition 1.2.2 and proposition 1.3.2]{Obata1}. For $x^*\in (X,\left|\cdot\right|_n)^*$ we define
\begin{equation*}
\left|x^*\right|_{-n} := \sup\limits_{\left|x\right|_n \leq 1} \left|\left\langle x^*,x\right\rangle\right|
\end{equation*}
By a Hahn-Banach argument, 
we have for all $x \in X:$
\begin{equation*}
\left|x\right|_n = \sup \left\{\left|\left\langle x^*,x\right\rangle\right|\ : x^*\in (X,\left|\cdot\right|_n)^* \wedge \left|x^*\right|_{-n} \leq 1 \right\}
\end{equation*}
for all semi-norms $\left|\cdot\right|_n,\ \ n\in\N$.
\begin{definition}
A family $\left\{\Omega_\theta\right\}_{\theta\in\R} \subset L(X)$ is called differentiable in $\theta_0\in\R$, if 
\begin{equation*}
\lim\limits_{\theta\rightarrow \theta_0} \frac{\Omega_{\theta}\phi-\Omega_{\theta_0}\phi}{\theta-\theta_0}
\end{equation*}
converges in $X$ for any $\phi\in X$. In that case a linear operator $\Omega'_{\theta_0}$ from $X$ into itself is defined by
\begin{equation*}
\Omega'_{\theta_0}\phi:=\lim\limits_{\theta\rightarrow \theta_0} \frac{\Omega_{\theta}\phi-\Omega_{\theta_0}\phi}{\theta-\theta_0}
\end{equation*}
$\left\{\Omega_\theta\right\}_{\theta\in\R}$ is called differentiable, if it is differentiable at each $\theta\in\R$.\\
In the following we use the abbreviation:\\
\begin{equation*}
\Omega' :=\Omega'_{0}
\end{equation*}
\end{definition}
\begin{proposition}\label{compactconv}
Let $\theta_0 \in\R$ and $\left\{\Omega_\theta\right\}_{\theta\in\R} \subset L(X)$ be a family of operators which is differentiable in $\theta_0$. Then $\Omega'_{\theta_0}$ is continuous, i.e. $\Omega'_{\theta_0}\in L(X)$. Moreover we have uniformly convergence on every compact (or equivalently, bounded) subset of $X$, i.e.:
\begin{equation*}
\lim\limits_{\theta\rightarrow \theta_0} \sup\limits_{\phi\in K} \left|\frac{\Omega_\theta\phi-\Omega_{\theta_0}\phi}{\theta-\theta_0} - \Omega'_{\theta_0}\phi\right|_n = 0
\end{equation*}
for any $n\in\N$ and any compact (or bounded) subset $K\subset X$.
\end{proposition}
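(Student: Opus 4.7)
The plan is to apply the Banach--Steinhaus theorem (uniform boundedness principle) to the family of difference quotient operators
\[
T_\theta := \frac{\Omega_\theta - \Omega_{\theta_0}}{\theta-\theta_0} \in L(X), \qquad \theta \neq \theta_0.
\]
By the differentiability hypothesis, $T_\theta \phi \to \Omega'_{\theta_0}\phi$ in $X$ for every $\phi \in X$ as $\theta \to \theta_0$, so $\{T_\theta\phi : \theta \neq \theta_0\}$ is bounded in $X$ for every fixed $\phi$. Since $X$ is a nuclear Fréchet space it is barreled, so pointwise boundedness of $\{T_\theta\}$ upgrades to equicontinuity: for every seminorm $|\cdot|_n$ there exist $m \in \N$ and $C>0$ (independent of $\theta$) such that $|T_\theta \phi|_n \leq C |\phi|_m$ for all $\phi \in X$ and all $\theta \neq \theta_0$. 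Passing to the pointwise limit yields $|\Omega'_{\theta_0}\phi|_n \leq C|\phi|_m$, which proves $\Omega'_{\theta_0} \in L(X)$.

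Next I would observe that $\{T_\theta - \Omega'_{\theta_0}\}_{\theta \neq \theta_0}$ is itself an equicontinuous family converging to $0$ pointwise on $X$. A standard fact about equicontinuous families of linear operators is that pointwise convergence automatically upgrades to uniform convergence on every precompact set: given $\epsilon>0$ and a seminorm $|\cdot|_n$, equicontinuity supplies $m \in \N$ and $\delta>0$ with $|(T_\theta - \Omega'_{\theta_0})\phi|_n < \epsilon/2$ whenever $|\phi|_m < \delta$; covering a precompact $K$ by finitely many $|\cdot|_m$-balls of radius $\delta$ centered at points $\phi_1,\dots,\phi_N \in K$ and invoking pointwise convergence at $\phi_1,\dots,\phi_N$ gives a uniform bound for $\theta$ close enough to $\theta_0$.

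To conclude, I would use that $X$ is a nuclear (F)-space and therefore a Montel space, so the bounded subsets of $X$ coincide with the relatively compact ones. Hence ``uniform convergence on compact subsets'' is equivalent to ``uniform convergence on bounded subsets,'' which is exactly the claim in the statement of the proposition.

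The only subtle step is the order in which equicontinuity and continuity of $\Omega'_{\theta_0}$ are established: one must apply Banach--Steinhaus to $\{T_\theta\}$ alone to extract equicontinuity and the continuity of the pointwise limit $\Omega'_{\theta_0}$, and only then form the shifted family $\{T_\theta-\Omega'_{\theta_0}\}$ to deploy the compactness-plus-equicontinuity argument. The identification of bounded subsets with precompact ones via the Montel property is the other essential ingredient, without which one would only obtain uniform convergence on compacta, not on arbitrary bounded sets.
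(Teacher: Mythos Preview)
Your proposal is correct and follows essentially the same route as the paper's proof, which simply invokes the Banach--Steinhaus theorem together with the Montel property of nuclear Fr\'echet spaces (citing Schaefer for the passage from pointwise to uniform convergence on compacta); you have spelled out in detail what the paper leaves as references. One small imprecision: the family $\{T_\theta : \theta \neq \theta_0\}$ indexed by all of $\R\setminus\{\theta_0\}$ need not be pointwise bounded---only its restriction to a punctured neighborhood of $\theta_0$ (equivalently, to any sequence $\theta_n \to \theta_0$) is---but this is harmless since the conclusion concerns only the limit $\theta \to \theta_0$.
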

\begin{proof}
First note that a subset of the nuclear space $X$ is compact if and only if it is closed and bounded.
The assertion follows by an application of the Banach-Steinhaus theorem.\\
(For the uniform convergence on compact subsets see e.g.~\cite[4.6 Theorem, p. 86]{Schaefer1}. )
\end{proof}
\begin{definition}
A family $\left\{\Omega_\theta\right\}_{\theta\in\R} \subset L(X)$ is called regularily differentiable in $\theta_0\in\R$, if there exists $\Omega '_{\theta_0}\in L(X)$ such that for any $n\in\N$ there exists $m\in\N$ such that
\begin{equation*}
\lim\limits_{\theta\rightarrow \theta_0}\sup\limits_{\left|\phi\right|_m\leq 1}\left|\frac{\Omega_{\theta}\phi-\Omega_{\theta_0}\phi}{\theta-\theta_0}-\Omega'_{\theta_0}\phi\right|_n  = 0.
\end{equation*}
$\left\{\Omega_\theta\right\}_{\theta\in\R}$ is called regular, if it is regularily differentiable at each $\theta\in\R$.
\end{definition}
\begin{definition}
Let $\left\{\Omega_\theta\right\}_{\theta\in\R}$ be a family of operators in $L(X)$ with
\begin{equation*}
\forall \theta_1,\ \theta_2\in\R:\ \Omega_{\theta_1+\theta_2} = \Omega_{\theta_1}\circ\Omega_{\theta_2},\ \ \ \Omega_0=\mathrm{Id}
\end{equation*}
Then, as easily seen, $\left\{\Omega_\theta\right\}_{\theta\in\R}$ is a subgroup of $GL(X)$ and is called a one-parameter subgroup of $GL(X)$.
\end{definition}
In the following we collect facts about one-parameter subgroups, for details see e.g.~\cite[Section 5.2]{Obata1}. 
%
\begin{lemma}\label{oneparam_remark}
Let $\left\{\Omega_\theta\right\}_{\theta\in\R}$ be a one-parameter subgroup of $GL(X)$.
\begin{enumerate}[(i)]
\item Then $\left\{\Omega_\theta\right\}_{\theta\in\R}$ is differentiable at each $\theta\in\R$ if and only if $\left\{\Omega_\theta\right\}_{\theta\in\R}$ is differentiable at 0, i.e. there exists $\Omega'\in L(X)$ such that
\begin{equation*}
\lim\limits_{\theta\rightarrow 0} \left|\frac{\Omega_{\theta}\phi-\phi}{\theta}-\Omega'\phi\right|_n = 0.
\end{equation*}
for all $\phi\in X$ and $n\in\N$.
\item Let $\left\{\Omega_\theta\right\}_{\theta\in\R}$ be a differentiable subgroup of $GL(X)$. Then we have
\begin{enumerate}[a)]
\item
$\Omega'$ is an element of $L(X)$, further unique and is called the infinitesimal generator of the differentiable one parameter subgroup $\left\{\Omega_\theta\right\}_{\theta\in\R}$ of $GL(X)$. Conversely a differentiable one parameter subgroup $\left\{\Omega_\theta\right\}_{\theta\in\R}$ of $GL(X)$ is uniquely defined by it's infinitesimal generator, see \cite[Proposition 5.2.2, p. 119]{Obata1}. If $\left\{\Omega_\theta\right\}_{\theta\in\R}$ is regular the infinitesimal generator $\Omega'$ is called an equicontinuous generator.
\item
$\left\{\Omega_\theta\right\}_{\theta\in\R}$ is infinitely many differentiable at each $\theta\in\R$ and
\begin{equation*}
\forall n\in\N:\ \frac{d^n}{d\theta^n}  \Omega_\theta = (\Omega')^n\circ\Omega_\theta = \Omega_\theta\circ (\Omega')^n
\end{equation*}
\item
$\R\rightarrow L(X),\ \theta\mapsto \Omega_\theta$ is continuous.\\
Note that $L(X)$ is equipped with the topology of bounded convergence, compare \cite[Section 5.2, Eq. (5.27), p. 119]{Obata1}.
\end{enumerate}
\end{enumerate}
\end{lemma}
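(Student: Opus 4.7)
My plan is to reduce everything to differentiability at $0$ and then exploit the group property $\Omega_{\theta_1+\theta_2}=\Omega_{\theta_1}\circ\Omega_{\theta_2}$, which also yields commutativity of all $\Omega_\theta$ with each other and in particular with $\Omega'$ (as a limit of difference quotients). For part (i), assume differentiability at $0$ with generator $\Omega'\in L(X)$; for arbitrary $\theta_0\in\R$ write
\begin{equation*}
\frac{\Omega_{\theta_0+h}\phi-\Omega_{\theta_0}\phi}{h}=\Omega_{\theta_0}\!\left(\frac{\Omega_h\phi-\phi}{h}\right)=\frac{\Omega_h(\Omega_{\theta_0}\phi)-\Omega_{\theta_0}\phi}{h}.
\end{equation*}
The first expression shows convergence to $\Omega_{\theta_0}\circ\Omega'\phi$ because $\Omega_{\theta_0}\in L(X)$ is continuous; the second shows convergence (to $\Omega'\circ\Omega_{\theta_0}\phi$) follows directly from differentiability at $0$. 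This gives equivalence of the two differentiability notions and simultaneously identifies the derivative at $\theta_0$ as $\Omega'\circ\Omega_{\theta_0}=\Omega_{\theta_0}\circ\Omega'$.

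For part (ii)a, $\Omega'\in L(X)$ is already furnished by Proposition \ref{compactconv}; uniqueness is immediate from the pointwise-limit definition, and the bijective correspondence between differentiable one-parameter subgroups of $GL(X)$ and their generators is cited directly from \cite[Proposition 5.2.2]{Obata1}. Part (ii)b is obtained by induction: the case $n=1$ is the identity from part (i), and assuming the formula $\frac{d^{n-1}}{d\theta^{n-1}}\Omega_\theta=(\Omega')^{n-1}\circ\Omega_\theta$, differentiating once more uses part (i) applied to the family $\theta\mapsto\Omega_\theta$ together with continuity of the continuous operator $(\Omega')^{n-1}\in L(X)$ to pass the limit inside. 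The two forms of the derivative coincide because $\Omega'$ commutes with every $\Omega_\theta$.

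For part (ii)c (continuity of $\theta\mapsto\Omega_\theta$ in the topology of bounded convergence), I reduce to continuity at $0$ via $\Omega_\theta-\Omega_{\theta_0}=(\Omega_{\theta-\theta_0}-\mathrm{Id})\circ\Omega_{\theta_0}$ and the fact that $\Omega_{\theta_0}$ maps bounded sets to bounded sets. For continuity at $0$, fix a bounded $M\subset X$ and a seminorm $|\cdot|_n$. By Proposition \ref{compactconv} applied at $\theta_0=0$, the difference quotient converges to $\Omega'$ uniformly on $M$, so there exists $\delta>0$ with
\begin{equation*}
\sup_{\phi\in M}\left|\frac{\Omega_h\phi-\phi}{h}-\Omega'\phi\right|_n\leq 1\qquad(0<|h|<\delta).
\end{equation*}
Since $\Omega'\in L(X)$ and $M$ is bounded, $C:=\sup_{\phi\in M}|\Omega'\phi|_n<\infty$, whence $\sup_{\phi\in M}|\Omega_h\phi-\phi|_n\leq|h|(1+C)\to 0$ as $h\to 0$. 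The main obstacle is a bookkeeping one rather than a conceptual one: ensuring that the convergences in part (i) and the continuity argument in (ii)c are uniform on bounded sets (not merely pointwise), which is handled cleanly by invoking Proposition \ref{compactconv}.
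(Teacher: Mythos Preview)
Your proof is correct. Note, however, that the paper does not actually prove this lemma: it is presented as a collection of standard facts with the remark ``for details see e.g.~\cite[Section 5.2]{Obata1},'' so there is no in-paper argument to compare against. Your argument---using the group law to transport differentiability from $0$ to arbitrary $\theta_0$, invoking Proposition~\ref{compactconv} for continuity of $\Omega'$ and for uniform-on-bounded-sets convergence, and bootstrapping via induction for higher derivatives---is exactly the standard route one finds in the cited reference, and all steps are sound.
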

\begin{lemma}\label{oneparam_equic}
Let $T: \R\rightarrow L(X)$ be a continuous mapping. If $K$ is a compact subset of $\R$, then $T(K)$ is equicontinuous.
\end{lemma}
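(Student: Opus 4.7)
The plan is to reduce the claim to the Banach--Steinhaus theorem for barrelled spaces, using that $X$, being a nuclear (F)-space, is in particular Fréchet and hence barrelled.

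First I would observe that since $T : \R \to L(X)$ is continuous and $K \subset \R$ is compact, the image $T(K)$ is a compact subset of $L(X)$. Because $L(X)$ is carrying the topology of bounded convergence and compact subsets of any topological vector space are bounded, $T(K)$ is bounded in $L(X)$ with respect to the topology of bounded convergence.

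Next I would note that the topology of bounded convergence on $L(X)$ is finer than the topology of simple (pointwise) convergence, because every singleton $\{x\} \subset X$ is a bounded set. Hence $T(K)$ is also bounded in $L(X)$ for the topology of pointwise convergence, which just means $\{T(\theta)\,x : \theta \in K\}$ is a bounded subset of $X$ for every fixed $x \in X$.

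Finally, since $X$ is a nuclear (F)-space, it is in particular a Fréchet space, hence barrelled. The Banach--Steinhaus theorem in the barrelled setting (see e.g.~Schaefer, \emph{Topological Vector Spaces}, Ch.~III, §4) states that every pointwise bounded family of continuous linear operators from a barrelled space into a locally convex space is equicontinuous. Applying this to $T(K) \subset L(X)$ gives the desired equicontinuity. The only subtle point is the passage from boundedness in the bounded-convergence topology to pointwise boundedness, which I handled above; after that, barrelledness of $X$ does all the work, so I do not expect a real obstacle.
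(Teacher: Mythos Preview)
Your proof is correct and follows essentially the same route as the paper's own argument: $T(K)$ is compact in $L(X)$, hence bounded, hence pointwise bounded, and then Banach--Steinhaus (using that the Fr\'echet space $X$ is barrelled) yields equicontinuity. The paper states this more tersely but the logical skeleton is identical; your added remark about why bounded-convergence boundedness implies pointwise boundedness just makes the step explicit.
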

\begin{proof}
$T(K)$ is compact, hence bounded, especially pointwisely bounded, by the definition of the topology of bounded convergence. The theorem of Banach and Steinhaus then completes the proof.
\end{proof}
\begin{proposition}\label{oneparam_compact}\indent
Let $\left\{\Omega_\theta\right\}_{\theta\in\R}$  be a differentiable one parameter subgroup of $GL(X)$.
\begin{enumerate}[(i)]
\item
$\left\{\Omega_\theta\right\}_{\theta\in\R}$ is {regular} if and only if for any $n\in\N$ there exists $m\in\N$ such that
\begin{equation*}
\lim\limits_{\theta\rightarrow 0}\sup\limits_{\left|\phi\right|_m\leq 1}\left|\frac{\Omega_{\theta}\phi-\phi}{\theta}-\Omega'\phi\right|_n  = 0.
\end{equation*}
\item
For any compact(or bounded) subset $K\subset X$ we have
\begin{equation*}
\lim\limits_{\theta\rightarrow 0}\sup\limits_{\phi\in K}\left|\frac{\Omega_{\theta}\phi-\phi}{\theta}-\Omega'\phi\right|_n  = 0.
\end{equation*}
\item
Let $\theta_0 > 0$. Then for any $n\in\N$ there exists $m\in\N$ and $K(n,m)>0$, such that for all $\phi\in X$
\begin{equation*}
\sup\limits_{\left|\theta\right|\leq \theta_0,\theta\neq 0}\left|\frac{\Omega_\theta\phi-\phi}{\theta}\right|_n \leq K(n,m)\left|\phi\right|_m
\end{equation*}
\end{enumerate}
\end{proposition}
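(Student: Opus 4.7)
The plan is to tackle the three parts separately, leveraging the semigroup identity $\Omega_{\theta_0+\theta}=\Omega_{\theta_0}\circ\Omega_\theta$, the continuity of each $\Omega_\theta$, and the formula $\Omega'_{\theta_0}=\Omega_{\theta_0}\circ\Omega'=\Omega'\circ\Omega_{\theta_0}$ supplied by Lemma~\ref{oneparam_remark}~(ii)(b). Part~(ii) is essentially free: it is exactly Proposition~\ref{compactconv} specialized to $\theta_0=0$, once we recall that on the nuclear Fr\'echet space $X$ the bounded and compact subsets coincide (closed--and--bounded $\Leftrightarrow$ compact by the Montel property), so the two formulations in the statement are equivalent.

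For part~(i), the direction ``regular $\Rightarrow$ regular at $0$'' is trivial. For the converse I would fix $\theta_0\in\R$ and $n\in\N$, and work from the identity
\begin{equation*}
\frac{\Omega_{\theta_0+\theta}\phi-\Omega_{\theta_0}\phi}{\theta}-\Omega'_{\theta_0}\phi \;=\; \Omega_{\theta_0}\!\left(\frac{\Omega_\theta\phi-\phi}{\theta}-\Omega'\phi\right),
\end{equation*}
which encodes the semigroup property together with Lemma~\ref{oneparam_remark}~(ii)(b). Continuity of $\Omega_{\theta_0}\in L(X)$ produces an $m_1\in\N$ and a constant $C>0$ with $|\Omega_{\theta_0}\psi|_n\le C|\psi|_{m_1}$ for every $\psi\in X$; the hypothesized regularity at $0$, applied to the semi-norm $|\cdot|_{m_1}$, then furnishes an $m\in\N$ for which $\sup_{|\phi|_m\le 1}|\frac{\Omega_\theta\phi-\phi}{\theta}-\Omega'\phi|_{m_1}\to 0$, and substituting into the identity yields the required regular limit with this same $m$.

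For part~(iii), the cleanest route is Banach--Steinhaus. For each nonzero $\theta\in[-\theta_0,\theta_0]$, the difference-quotient operator $T_\theta\colon\phi\mapsto (\Omega_\theta\phi-\phi)/\theta$ lies in $L(X)$. Fix $\phi\in X$: by Lemma~\ref{oneparam_remark}~(ii)(c) the map $\theta\mapsto\Omega_\theta\phi$ is continuous, so $\theta\mapsto T_\theta\phi$ is continuous on $[-\theta_0,\theta_0]\setminus\{0\}$, and differentiability at $0$ gives it a limit equal to $\Omega'\phi$ as $\theta\to 0$. The extended map is therefore continuous on the compact interval $[-\theta_0,\theta_0]$, so its image is compact, hence bounded, in $X$. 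Thus $\{T_\theta\}_{0<|\theta|\le\theta_0}$ is pointwise bounded; applying Banach--Steinhaus to the barrelled nuclear Fr\'echet space $X$ makes this family equicontinuous, which is precisely the quantitative bound claimed in~(iii).

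The step I expect to require the most care is verifying the pointwise-boundedness hypothesis in part~(iii) over the punctured interval $[-\theta_0,\theta_0]\setminus\{0\}$: one must patch the convergence near $0$ with continuity away from $0$ into a single compactness argument in $X$. Once this is in place, all three parts are assembled from the preceding lemmas and standard functional-analytic tools without further obstacle.
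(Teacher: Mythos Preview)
Your proposal is correct and follows essentially the same route as the paper. Parts~(i) and~(ii) match the paper's treatment exactly: (i) via the semigroup identity and continuity of $\Omega_{\theta_0}$, (ii) as a specialization of Proposition~\ref{compactconv}. For~(iii) the paper first uses~(ii) to show that the extended map $\theta\mapsto(\Omega_\theta-\mathrm{Id})/\theta$ is continuous from $\R$ into $L(X)$ with the topology of bounded convergence, and then invokes Lemma~\ref{oneparam_equic}; you instead establish only pointwise boundedness of the difference quotients and apply Banach--Steinhaus directly. Since Lemma~\ref{oneparam_equic} is itself proved by reducing to pointwise boundedness and Banach--Steinhaus, your argument is a mild shortcut of the same mechanism rather than a different idea.
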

\begin{proof}
The statement (i) is easily verified, moreover (ii) is an immediate consequence of Proposition \ref{compactconv}.
To prove statement (iii) we have by (ii) and Proposition \ref{oneparam_remark} (ii) a), that the mapping
$$\R\longrightarrow L(X), \quad 
\theta\longmapsto 
\left\{ 
\begin{array}{ll} 
\frac{\Omega_\theta-\mathrm{Id}}{\theta}&,\ \theta \neq 0\\ 
\Omega'&,\ \theta = 0
\end{array}
\right.$$
is continuous. Then the claim is obtained by Lemma \ref{oneparam_equic}.
\end{proof}
%
\begin{theorem}[Regularity]\label{oneparam_mainresult}
Let $\left\{\Omega_\theta\right\}_{\theta\in\R}$  be a differentiable one-parameter subgroup of $GL(X)$. Then $\left\{\Omega_\theta\right\}_{\theta\in\R}$ is regular.
\end{theorem}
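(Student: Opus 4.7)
The overall strategy is an Arzelà--Ascoli-type argument in a nuclear (F)-space: combine the uniform bound from Proposition~\ref{oneparam_compact}(iii) with the pointwise convergence implicit in differentiability, and use nuclearity to upgrade $|\cdot|_m$--bounded sets to $|\cdot|_{m'}$--precompact sets for a suitable $m \geq m'$. It suffices to establish regular differentiability at $\theta_0 = 0$, since the group identity $\Omega_{\theta_0 + h} = \Omega_h \circ \Omega_{\theta_0}$ together with the continuity of $\theta \mapsto \Omega_\theta$ in $L(X)$ (Lemma~\ref{oneparam_remark}(ii) c)) reduces the estimate at a general $\theta_0$ to one at $0$.

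Fix $n \in \N$ and write $T_\theta := \frac{\Omega_\theta - \mathrm{Id}}{\theta} - \Omega'$ for $\theta \neq 0$. First, I would apply Proposition~\ref{oneparam_compact}(iii) with, say, $\theta_0 = 1$, together with the continuity of $\Omega' \in L(X)$, to produce $m' \in \N$ and a constant $C > 0$ such that
\[
|T_\theta \phi|_n \leq C |\phi|_{m'}, \qquad \phi \in X,\ 0 < |\theta| \leq 1.
\]
Next, invoking nuclearity of $X$, I would choose $m \geq m'$ so that the canonical Hilbert-space map $X_m \hookrightarrow X_{m'}$ between the associated completions is nuclear, hence compact. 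In particular the $|\cdot|_m$--unit ball $B_m := \{\phi \in X : |\phi|_m \leq 1\}$ is relatively compact in the $|\cdot|_{m'}$--topology.

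The remainder is a standard $\varepsilon$--net argument. Given $\varepsilon > 0$, cover $B_m$ by finitely many $|\cdot|_{m'}$--balls of radius $\varepsilon/(3C)$ centred at points $\phi_1, \ldots, \phi_N \in B_m$; use differentiability to pick $\delta \in (0,1]$ with $|T_\theta \phi_i|_n < \varepsilon/3$ for every $i$ and every $0 < |\theta| \leq \delta$; and for an arbitrary $\phi \in B_m$ split $T_\theta \phi = T_\theta(\phi - \phi_i) + T_\theta \phi_i$, bounding the first summand via the equicontinuity estimate above. This yields $\sup_{\phi \in B_m} |T_\theta \phi|_n \to 0$, which is precisely the condition in Proposition~\ref{oneparam_compact}(i). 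The main obstacle is conceptual rather than technical: the unit ball $B_m$ is typically \emph{not} bounded in the ambient topology of $X$, so Proposition~\ref{oneparam_compact}(ii) cannot be applied directly to it, and without the compact embedding $X_m \hookrightarrow X_{m'}$ the finite $\varepsilon$--net in $|\cdot|_{m'}$ simply would not exist. Nuclearity is exactly what rescues the argument.
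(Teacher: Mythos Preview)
Your argument is correct, and it takes a genuinely different route from the paper's. The paper does not use nuclearity at all in this proof; instead it exploits the \emph{second-order} differentiability furnished by Lemma~\ref{oneparam_remark}(ii)(b). For fixed $n$, $\xi\in X$, and $\eta\in (X,|\cdot|_n)^*$ the paper sets $f(t):=\langle\eta,\Omega_t\xi\rangle$, computes $f''(t)=\langle\eta,(\Omega')^2\Omega_t\xi\rangle$, and then bounds $\max_{|t|\le\theta_0}|f''(t)|\le K|\xi|_{n+m}|\eta|_{-n}$ by applying Lemma~\ref{oneparam_equic} to the family $\{(\Omega')^2\Omega_t\}_{|t|\le\theta_0}$. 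Taylor's theorem with Lagrange remainder then yields the quantitative estimate
\[
\sup_{|\xi|_{n+m}\le 1}\left|\frac{\Omega_\theta\xi-\xi}{\theta}-\Omega'\xi\right|_n\le \frac{K}{2}|\theta|,
\]
giving regularity with an explicit linear rate. Your approach trades this extra derivative for the compact inclusion $X_m\hookrightarrow X_{m'}$ supplied by nuclearity and runs a standard equicontinuity-plus-$\varepsilon$-net argument; it is qualitatively the same input (Proposition~\ref{oneparam_compact}(iii), i.e.\ equicontinuity from Banach--Steinhaus) combined with pointwise convergence, but it yields no rate. Conversely, the paper's Taylor argument would go through in any Fr\'echet space, nuclear or not, since it never invokes the Heine--Borel property; your proof genuinely needs the compact embedding.
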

\begin{proof}
Let $n\in\N$ and $\xi\in X$, $\eta \in (X,\left|\cdot\right|_n)^*$. We define for all $t \in\R:$ 
\begin{align*}
f(t):= \left\langle \eta,\Omega_t\xi\right\rangle
\end{align*}
Then we have by Proposition \ref{oneparam_remark}
\begin{align*}
f'(t)  &= \left\langle \eta,\Omega'\Omega_t\xi\right\rangle\\
f''(t) &= \left\langle \eta,(\Omega')^2\Omega_t\xi\right\rangle
\end{align*}
Now let $\theta_0>0$ be fixed. Then by Proposition \ref{oneparam_remark} (ii) c) and Lemma \ref{oneparam_equic}, $\left\{(\Omega')^2\Omega_t\right\}_{\left|t\right|\leq\theta_0}$ is equicontinuous. \\
Thus there exists $m\in\NZ,\ K=K(\theta_0,n,m,\Omega')>0$ such that
\begin{equation*}
\max\limits_{\left|t\right|\leq\theta_0} \left|f''(t)\right|\leq K\left|\xi\right|_{n+m}\left|\eta\right|_{-n}
\end{equation*}
Let $\theta\in\R$ with $\left|\theta\right|\leq\theta_0$.
By Taylor expansion we have
\begin{align*}
\left|f(\theta)-f(0)-\theta\cdot f'(0)\right|
&\leq \frac{\left|\theta\right|^2}{2}  \max\limits_{\left|t\right|\leq\theta_0} \left|f''(t)\right|\\
\leq &\frac{\left|\theta\right|^2}{2} K\left|\xi\right|_{n+m}\left|\eta\right|_{-n}
\end{align*}
and for $\theta\neq 0$
\begin{align*}
\sup\limits_{\left|\xi\right|_{n+m}\leq 1} \sup\limits_{\left|\eta\right|_{-n}
\leq 1} \left|\frac{\left\langle \eta,\Omega_\theta\xi\right\rangle-\left\langle \eta,\xi\right\rangle}{\theta}-\left\langle \eta,\Omega'\xi\right\rangle\right|
\leq &\frac{\left|\theta\right|}{2} K.
\end{align*}
Hence
\begin{equation*}
\sup\limits_{\left|\xi\right|_{n+m}\leq 1} \left|\frac{\Omega_\theta\xi-\xi}{\theta}-\Omega'\xi\right|_n
\leq \frac{\left|\theta\right|}{2} K
\end{equation*}
such that 
\begin{equation*}
\lim\limits_{\theta\rightarrow 0}\sup\limits_{\left|\xi\right|_{n+m}\leq 1} \left|\frac{\Omega_\theta\xi-\xi}{\theta}-\Omega'\xi\right|_n = 0.
\end{equation*}
\end{proof}
%
%
%
%
\begin{proposition}\label{basics1_group} 
Let $\left\{S_\theta\right\}_{\theta\in\R}$ be a differentiable one parameter subgroup of $GL(X)$ and $\left\{T_\theta\right\}_{\theta\in\R}$ be a family of operators in $L(X)$ which is differentiable in zero with $\lim\limits_{\theta\rightarrow 0} T_\theta\phi = \phi$ for all $\phi\in X$. Then
\begin{equation*}
\frac{d}{d\theta}\big|_{\theta=0} S_\theta \circ T_\theta = S' + T'
\end{equation*}
If $\left\{T_\theta\right\}_{\theta\in\R}$ is regularily differentiable in zero, then\\
$\left\{S_\theta\circ T_\theta\right\}_{\theta\in\R}$ is also regularily differentiable in zero.
\end{proposition}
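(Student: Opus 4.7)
The plan is to decompose the difference quotient for $S_\theta \circ T_\theta$ at zero into a term that isolates the $T$-derivative and one that isolates the $S$-derivative:
\begin{equation*}
\frac{S_\theta T_\theta \phi - \phi}{\theta} \;=\; S_\theta\!\left(\frac{T_\theta \phi - \phi}{\theta}\right) + \frac{S_\theta \phi - \phi}{\theta}.
\end{equation*}
Since $S_0 = \mathrm{Id}$ and, by the assumption $T_\theta \phi \to \phi$, also $T_0 = \mathrm{Id}$, this is indeed the correct difference quotient at $\theta = 0$. The second summand tends to $S'\phi$ in $X$ by differentiability of $\{S_\theta\}$ at zero. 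To show that the first summand tends to $T'\phi$, I would further split $S_\theta \psi_\theta - T'\phi$, with $\psi_\theta := (T_\theta \phi - \phi)/\theta$, as $S_\theta(\psi_\theta - T'\phi) + (S_\theta T'\phi - T'\phi)$. The second piece vanishes because $\theta \mapsto S_\theta$ is continuous from $\R$ into $L(X)$ by Lemma \ref{oneparam_remark}(ii)c). For the first piece I would invoke Lemma \ref{oneparam_equic}: the image $\{S_\theta : |\theta|\leq 1\}$ is equicontinuous, so for every $n$ there exist $m$ and $C>0$ with $|S_\theta v|_n \leq C|v|_m$ uniformly in $|\theta|\leq 1$; since $\psi_\theta \to T'\phi$ in every seminorm, this piece also tends to zero, proving the derivative formula.

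For the regularity claim, the same decomposition refined by subtracting $(S'+T')\phi$ yields
\begin{equation*}
\frac{S_\theta T_\theta \phi - \phi}{\theta} - (S'+T')\phi \;=\; S_\theta\!\left(\frac{T_\theta \phi - \phi}{\theta} - T'\phi\right) + (S_\theta T'\phi - T'\phi) + \left(\frac{S_\theta \phi - \phi}{\theta} - S'\phi\right).
\end{equation*}
Given $n \in \N$, I would produce a single seminorm index $m$ so that each of the three pieces is uniformly small on $\{|\phi|_m \leq 1\}$ as $\theta \to 0$. Theorem \ref{oneparam_mainresult} makes $\{S_\theta\}$ automatically regular at zero, which delivers an index $m_1$ controlling the third term. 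For the first term, equicontinuity of $\{S_\theta\}_{|\theta|\leq 1}$ first replaces $|\cdot|_n$ by some $|\cdot|_{m_2}$, and the regular differentiability hypothesis on $\{T_\theta\}$ then provides an index $m_3$ such that $\sup_{|\phi|_{m_3}\leq 1}|(T_\theta\phi - \phi)/\theta - T'\phi|_{m_2} \to 0$. For the second term I would use Proposition \ref{oneparam_compact}(iii) to obtain an estimate $|S_\theta\psi - \psi|_n \leq K|\theta|\,|\psi|_{m_4}$ for $|\theta|\leq \theta_0$, and then apply continuity of $T' \in L(X)$ to bound $|T'\phi|_{m_4}$ by some $C'|\phi|_{m_5}$, which forces the whole expression to be $O(\theta)$ uniformly on $\{|\phi|_{m_5}\leq 1\}$. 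Taking $m := \max(m_1, m_3, m_5)$ finishes the argument.

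The routine but delicate part is the bookkeeping of seminorm indices: each time $S_\theta$, $S'$, or $T'$ is stripped off, the controlling seminorm index must grow in a coordinated way so that the three error terms ultimately share the same dominating seminorm $|\cdot|_m$. No genuine obstacle arises beyond this; the only nontrivial external ingredients are the automatic regularity of differentiable one-parameter subgroups (Theorem \ref{oneparam_mainresult}), the equicontinuity Lemma \ref{oneparam_equic}, and the $O(\theta)$-estimate in Proposition \ref{oneparam_compact}(iii).
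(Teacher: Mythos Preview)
Your proposal is correct and follows essentially the same route as the paper. Both proofs use the identical decomposition $\frac{S_\theta T_\theta\phi-\phi}{\theta}=S_\theta\bigl(\frac{T_\theta\phi-\phi}{\theta}\bigr)+\frac{S_\theta\phi-\phi}{\theta}$, then split the first summand into a piece controlled by equicontinuity of $\{S_\theta\}_{|\theta|\le 1}$ together with the $T$-difference quotient, and a piece of the form $S_{\pm\theta}T'\phi-T'\phi$; the paper factors $S_\theta$ out before splitting (getting $S_{-\theta}T'\phi$ inside), whereas you split first (getting $S_\theta T'\phi$), but this is a cosmetic difference. Your treatment of the regularity claim is actually more explicit than the paper's, which compresses the uniform argument into a one-line reference to Theorem \ref{oneparam_mainresult} and the continuity of $T'$; your use of Proposition \ref{oneparam_compact}(iii) for the $O(\theta)$ bound on $S_\theta T'\phi - T'\phi$ and your careful tracking of the seminorm indices $m_1,\dots,m_5$ make the same reasoning fully transparent.
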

\begin{proof}\indent
Let $n\in\N,\ \phi\in X$. Then $\left\{S_\theta|\ \left|\theta\right|\leq 1\right\}$ is equicontinuous by Proposition \ref{oneparam_remark}, (ii) c). Then there exist a semi-norm $\left|\cdot\right|_m$, with $m\in\N$ and $m\geq n$, and a constant $K>0$ such that $\left|S_\theta(\phi)\right|_n \leq K\cdot \left|\phi\right|_m$ for all $\phi\in X$ and $\theta\in\R$ with $\left|\theta\right|\leq 1$. Now let $\phi\in X$ be arbitrarily chosen. For  $\theta\in\R$ with $\left|\theta\right|\leq 1$ we have:
\begin{multline*}
\left|\frac{S_\theta\circ T_\theta(\phi)-\phi}{\theta}-S'\phi-T'\phi\right|_n
=\left|\frac{S_\theta\circ (T_\theta(\phi)-\phi)}{\theta} + \frac{S_\theta(\phi)-\phi}{\theta}-S'\phi-T'\phi\right|_n\\
\leq\left|\frac{S_\theta\circ (T_\theta(\phi)-\phi)}{\theta}-T'\phi\right|_n + \left|\frac{S_\theta(\phi)-\phi}{\theta}-S'\phi\right|_n\\
\leq K\cdot\left|\frac{(T_\theta(\phi)-\phi)}{\theta}-S_{-\theta}T'\phi\right|_m + \left|\frac{S_\theta(\phi)-\phi}{\theta}-S'\phi\right|_n\\
\leq K\cdot\left|\frac{(T_\theta(\phi)-\phi)}{\theta}-T'\phi\right|_m + 
K\cdot\left|S_{-\theta}(T'\phi) - (T'\phi)\right|_m + 
\left|\frac{S_\theta(\phi)-\phi}{\theta}-S'\phi\right|_n,
\end{multline*}
Where last inequalilty follows by Theorem \ref{oneparam_mainresult} and the continuity of $T'$.
\end{proof}
We use the following notation, due to \cite[Eq. (4.66), p. 106]{Obata1}
\begin{definition}
$$\left\{ 
\begin{array}{ll} 
\gamma_n(T) &:= \sum\limits_{k=0}^{n-1} \mathrm{Id}^{\otimes k}\otimes T \otimes \mathrm{Id}^{\otimes (n-1-k)},\ n\geq 1\\ 
\gamma_0(T) &:= 0
\end{array}
\right.
$$
\end{definition}

Now let $T\in L(E_\C)$. We recall the definition of  the second quantization operator  of $T$, denoted by  $\Gamma(T)$ and $d\Gamma(T)$, the differential second quantization operator. Suppose $\phi\in (E)_\beta$ is given as
\begin{equation*}
\phi(x) = \sum\limits_{n=0}^{\infty} \left\langle :x^{\otimes n}:,f_n\right\rangle,\ \ x\in E^*,\ f_n\in (E^{\otimes n}_\C)_{sym}
\end{equation*}
as usual. We put
\begin{equation*}
\Gamma(T)\phi(x) := \sum\limits_{n=0}^{\infty} \left\langle :x^{\otimes n}:,T^{\otimes n}f_n\right\rangle 
\end{equation*}
and 
\begin{equation*}
d\Gamma(T)\phi(x) := \sum\limits_{n=0}^{\infty} \left\langle :x^{\otimes n}:,\gamma_n(T)f_n\right\rangle 
\end{equation*}
We have $\Gamma(T),\ d\Gamma(T)\in L((E)_\beta)$, for all $0\leq \beta < 1$.

\begin{lemma}\label{tensor_of_transf}
Let $\left\{\Omega_\theta\right\}_{\theta\in\R}$ be a differentiable one parameter subgroup of $GL(X)$ with infinitesimal generaor $\Omega'$. Then, for all $n\in\N$, $\left\{\Omega_\theta^{\otimes n}\right\}_{\theta\in\R}$ is a differentiable one parameter subgroup of $GL(X^{\otimes n})$ with
\begin{equation*}
\frac{d}{d\theta}\big|_{\theta=0}\ \Omega_\theta^{\otimes n} = \gamma_n(\Omega')
\end{equation*}
\end{lemma}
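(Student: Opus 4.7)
The plan is to proceed by induction on $n$, reducing differentiability at an arbitrary $\theta\in\R$ to differentiability at $0$ via Lemma \ref{oneparam_remark} (i). The subgroup and invertibility properties are immediate from functoriality of tensor powers: $\Omega_{\theta_1+\theta_2}^{\otimes n}=(\Omega_{\theta_1}\circ\Omega_{\theta_2})^{\otimes n}=\Omega_{\theta_1}^{\otimes n}\circ\Omega_{\theta_2}^{\otimes n}$, $\Omega_0^{\otimes n}=\mathrm{Id}^{\otimes n}$, and $(\Omega_\theta^{\otimes n})^{-1}=\Omega_{-\theta}^{\otimes n}\in L(X^{\otimes n})$, so $\{\Omega_\theta^{\otimes n}\}_{\theta\in\R}$ is a one-parameter subgroup of $GL(X^{\otimes n})$.

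The base case $n=1$ is the hypothesis. For the step $n\to n+1$, I would write $\Omega_\theta^{\otimes(n+1)}=\Omega_\theta^{\otimes n}\otimes\Omega_\theta$ and invoke the Leibniz-type identity
\begin{equation*}
\Omega_\theta^{\otimes(n+1)}-\mathrm{Id}^{\otimes(n+1)}=\bigl(\Omega_\theta^{\otimes n}-\mathrm{Id}^{\otimes n}\bigr)\otimes\Omega_\theta+\mathrm{Id}^{\otimes n}\otimes\bigl(\Omega_\theta-\mathrm{Id}\bigr).
\end{equation*}
Dividing by $\theta$, the second summand converges pointwise to $\mathrm{Id}^{\otimes n}\otimes\Omega'$ by the definition of $\Omega'$, and the first to $\gamma_n(\Omega')\otimes\mathrm{Id}$ by the inductive hypothesis combined with $\Omega_\theta\to\mathrm{Id}$ in $L(X)$ (Proposition \ref{oneparam_remark} (ii) c)). Summing and reindexing,
\begin{equation*}
\gamma_n(\Omega')\otimes\mathrm{Id}+\mathrm{Id}^{\otimes n}\otimes\Omega'=\sum_{k=0}^{n}\mathrm{Id}^{\otimes k}\otimes\Omega'\otimes\mathrm{Id}^{\otimes(n-k)}=\gamma_{n+1}(\Omega').
\end{equation*}

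The main obstacle is taking the limit in the mixed term $\bigl((\Omega_\theta^{\otimes n}-\mathrm{Id}^{\otimes n})/\theta\bigr)\otimes\Omega_\theta$, where one factor is a difference quotient while the other depends on $\theta$. The necessary uniform estimates come from equicontinuity of $\{\Omega_\theta:|\theta|\leq 1\}$, provided by Lemma \ref{oneparam_equic} applied to the continuous map $\theta\mapsto\Omega_\theta$ of Proposition \ref{oneparam_remark} (ii) c), together with the inductive hypothesis upgraded to a uniform bound on $(\Omega_\theta^{\otimes n}-\mathrm{Id}^{\otimes n})/\theta$ by Proposition \ref{oneparam_compact} (iii) at stage $n$. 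One then splits $A_\theta\otimes B_\theta-A_0\otimes B_0=(A_\theta-A_0)\otimes B_\theta+A_0\otimes(B_\theta-B_0)$ on simple tensors, estimates each factor in the Hilbertian semi-norms of $X^{\otimes n}$ by products of operator-norm bounds, and extends by linearity and density of algebraic tensors in $X^{\otimes(n+1)}$. The overall pattern parallels the proof of Proposition \ref{basics1_group}, with tensor product in place of composition.
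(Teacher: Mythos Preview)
Your argument is correct. The paper's proof is shorter because it exploits the factorization
\[
\Omega_\theta^{\otimes 2} = (\Omega_\theta\otimes\mathrm{Id})\circ(\mathrm{Id}\otimes\Omega_\theta)
\]
(and analogously for higher $n$), recognizes each factor as a differentiable one-parameter subgroup of $GL(X^{\otimes n})$, and then invokes Proposition~\ref{basics1_group} directly to obtain the Leibniz formula $\Omega'\otimes\mathrm{Id}+\mathrm{Id}\otimes\Omega'=\gamma_2(\Omega')$. In other words, the paper reduces the tensor-product Leibniz rule to the already-established composition Leibniz rule, so all the equicontinuity bookkeeping is hidden inside that proposition.

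Your approach instead works with the additive telescoping identity and carries out the equicontinuity/density argument by hand at the tensor level---essentially reproving the content of Proposition~\ref{basics1_group} in the tensor setting, as you yourself note. This is perfectly valid and arguably more transparent about where the uniform estimates enter, but it duplicates work. If you adopt the paper's factorization trick, the only residual point to check is that $\{\Omega_\theta\otimes\mathrm{Id}\}_{\theta\in\R}$ (and its counterpart) is itself a differentiable one-parameter subgroup of $GL(X\otimes X)$, which is immediate from the cross-norm structure; then Proposition~\ref{basics1_group} does the rest in one line.
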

\begin{proof}
We show the case $n=2$. The general case is similar.\\
It holds
\begin{equation*}
\Omega_\theta \otimes \Omega_\theta = (\Omega_\theta \otimes \mathrm{Id}) \circ (\mathrm{Id} \otimes \Omega_\theta)
\end{equation*}
Now apply Proposition \ref{basics1_group}
\end{proof}
\begin{proposition}\label{basics2_group}\indent
Let $0\leq\beta<1$. Further let $\left\{\Omega_\theta\right\}_{\theta\in\R}$ be a differentiable one parameter subgroup of $GL(E_\C)$ with infinitesimal generator $\Omega'$. Then $\Gamma(\Omega_\theta)_{\theta\in\R}$ is a differentiable one-parameter subgroup of $GL((E)_\beta)$ with infinitesimal generator $d\Gamma(\Omega')$.
\end{proposition}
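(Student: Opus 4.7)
The plan: verify the group property, reduce differentiability to $\theta=0$, and then establish the derivative chaos-wise via dominated convergence. The group property $\{\Gamma(\Omega_\theta)\}_{\theta\in\R} \subset GL((E)_\beta)$ follows from functoriality of second quantization, $\Gamma(ST) = \Gamma(S)\Gamma(T)$ and $\Gamma(\mathrm{Id}) = \mathrm{Id}$, applied to $\Omega_{\theta_1+\theta_2} = \Omega_{\theta_1}\Omega_{\theta_2}$, with inverse $\Gamma(\Omega_\theta)^{-1} = \Gamma(\Omega_{-\theta})$. By Lemma \ref{oneparam_remark}(i), differentiability everywhere reduces to differentiability at $\theta=0$ with derivative $d\Gamma(\Omega')$.

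Passing to the chaos decomposition, for $\varphi \sim (f_n) \in (E)_\beta$ the $n$-th chaos component of $\frac{\Gamma(\Omega_\theta)\varphi - \varphi}{\theta} - d\Gamma(\Omega')\varphi$ is
\begin{equation*}
h_n^\theta := \tfrac{\Omega_\theta^{\otimes n} - \mathrm{Id}^{\otimes n}}{\theta} f_n - \gamma_n(\Omega') f_n,
\end{equation*}
which by Lemma \ref{tensor_of_transf} satisfies $|h_n^\theta|_p \to 0$ as $\theta \to 0$ for each fixed $n$ and $p$. The claim thus becomes $\sum_n (n!)^{1+\beta} |h_n^\theta|_p^2 \to 0$, which I would establish by dominated convergence.

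The main obstacle is producing the uniform summable majorant. I would extract it from the telescoping identity
\begin{equation*}
\tfrac{\Omega_\theta^{\otimes n} - \mathrm{Id}^{\otimes n}}{\theta} = \sum_{k=0}^{n-1} \Omega_\theta^{\otimes k} \otimes g_\theta \otimes \mathrm{Id}^{\otimes (n-1-k)}, \qquad g_\theta := \tfrac{\Omega_\theta - \mathrm{Id}}{\theta},
\end{equation*}
together with the analogous expansion of $\gamma_n(\Omega')$. Proposition \ref{oneparam_compact}(iii) furnishes a uniform bound $|g_\theta \xi|_p \leq K |\xi|_{p+q}$ for $0 < |\theta| \leq \theta_0$; continuity of $\theta \mapsto \Omega_\theta$ (Lemma \ref{oneparam_remark}(ii)(c)) together with Lemma \ref{oneparam_equic} yields an analogous bound on $\Omega_\theta$ itself. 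Combined with $\|\mathrm{Id}\|_{E_{\C, p+q} \to E_{\C, p}} = \rho^q < 1$ and multiplicativity of Hilbert tensor-product norms, this produces an estimate $|h_n^\theta|_p \leq C n L^n |f_n|_{p+q}$ uniformly in $\theta$, with $C, L > 0$ independent of $n$.

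Finally, since $\rho < 1$, I would enlarge the Sobolev index to $p+q'$ with $q' > q$ chosen so that both $L\rho^{q'-q} < 1$ and $n^2 \rho^{2n(q'-q)}$ is bounded; using $|f_n|_{p+q} \leq \rho^{n(q'-q)} |f_n|_{p+q'}$, this gives the termwise bound $(n!)^{1+\beta}|h_n^\theta|_p^2 \leq C'(n!)^{1+\beta}|f_n|_{p+q'}^2$, summable to $C'|\varphi|_{p+q',\beta}^2 < \infty$. Dominated convergence in the counting measure on $\NZ$ then delivers the desired convergence and hence the differentiability with generator $d\Gamma(\Omega')$.
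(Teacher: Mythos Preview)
Your argument is correct and self-contained: the telescoping expansion combined with the uniform operator bounds from Proposition~\ref{oneparam_compact}(iii) and Lemma~\ref{oneparam_equic} does produce a summable majorant of the form $(n!)^{1+\beta}|h_n^\theta|_p^2 \leq C'(n!)^{1+\beta}|f_n|_{p+q'}^2$ once the Sobolev index is raised enough to kill the geometric growth $L^n$, and dominated convergence then finishes the job.

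The paper takes a different route. Its proof is essentially a pointer: it observes that the weight sequence $\alpha(n)=n!^\beta$ satisfies the hypotheses of \cite[Theorem~4.2]{Chung3}, where the result is established via the characterization theorem for the $S$-transform rather than by direct chaos-wise norm estimates; it then remarks that Lemma~\ref{tensor_of_transf} makes the expected generator transparent. So the paper outsources the analytic work to an $S$-transform argument, whereas you carry out an explicit Fock-space estimate. Your approach buys self-containment and makes the role of the equicontinuity results (Proposition~\ref{oneparam_compact}, Lemma~\ref{oneparam_equic}) completely explicit; the paper's approach is shorter on the page but relies on external machinery and on the reader knowing how the characterization theorem controls convergence in $(E)_\beta$.
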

\begin{proof}
Note that for $0\leq \beta<1$, the sequence $(\alpha(n))_{n\in\NZ}$ with $\alpha(n):= n!^\beta$ fullfills the conditions of \cite[Theorem 4.2, p.696]{Chung3}, where a detailed proof, based on the characterization theorem, is given. On the other hand the expected result is easily seen by Lemma \ref{tensor_of_transf}.
\end{proof}
For a similar statement like Proposition \ref{basics2_group}, see \cite[5.4.5, p. 130-131]{Obata1}.

\begin{proposition}\label{basics4_group}\indent
Let $\left\{\Omega_\theta\right\}_{\theta\in\R}$ be a differentiable one parameter subgroup of $GL(E_\C)$. Further let $r\in\N$, $1>\beta\geq\frac{max(0,r-2)}{r}$ and $\kappa_{0,r}\in (E_\C^{\otimes r})^*_{sym}$.\\\\
Then $\left\{\exp(\Xi_{0,r}((\Omega_\theta^{\otimes r})^*{\kappa_{0,r}}))\right\}_{\theta\in\R}\subset GL((E)_\beta)$ is an in zero differentiable family of operators with
\begin{equation*}
\frac{d}{d\theta}\big|_{\theta=0} \exp(\Xi_{0,r}((\Omega_\theta^{\otimes r})^*{\kappa_{0,r}})) = \Xi_{0,r}((\gamma_r(\Omega'))^*{\kappa_{0,r}})\circ \exp(\Xi_{0,r}(({\kappa_{0,r}})).
\end{equation*}
\end{proposition}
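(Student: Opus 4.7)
The plan is to reduce the differentiability of the operator-valued family $\exp(\Xi_{0,r}((\Omega_\theta^{\otimes r})^*\kappa_{0,r}))$ to the differentiability of a scalar-valued family in $(E)_\beta^*$ via the algebra isomorphism $C$ of Theorem \ref{subalgebra1}, and then verify the latter by $S$-transform methods using Theorem \ref{generalconv}.

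First, by Lemma \ref{tensor_of_transf}, $\{\Omega_\theta^{\otimes r}\}_{\theta\in\R}$ is a differentiable one-parameter subgroup of $GL(E_\C^{\otimes r})$ with infinitesimal generator $\gamma_r(\Omega')$. Applying Proposition \ref{dual_contin} followed by evaluation at $\kappa_{0,r}\in (E_\C^{\otimes r})^*_{sym}$, the curve $\theta\mapsto (\Omega_\theta^{\otimes r})^*\kappa_{0,r}$ is differentiable at $\theta=0$ in $(E_\C^{\otimes r})^*$ with derivative $(\gamma_r(\Omega'))^*\kappa_{0,r}$. Setting $\psi_\theta:=\langle :x^{\otimes r}:,(\Omega_\theta^{\otimes r})^*\kappa_{0,r}\rangle\in(E)_\beta^*$, one has $C_{\psi_\theta}=\Xi_{0,r}((\Omega_\theta^{\otimes r})^*\kappa_{0,r})$, and $\theta\mapsto\psi_\theta$ is differentiable at $0$ in $(E)_\beta^*$ with derivative $\psi'_0:=\langle :x^{\otimes r}:,(\gamma_r(\Omega'))^*\kappa_{0,r}\rangle$. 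Since $\beta\geq\frac{\max(0,r-2)}{r}$, Corollary \ref{expo} yields convergence of $\exp(C_{\psi_\theta})$ in $L((E)_\beta,(E)_\beta)$, and the homomorphism property of $C$ together with the continuity of the Wick exponential in Lemma \ref{wickconv} gives the crucial identification
\begin{equation*}
\exp\bigl(\Xi_{0,r}((\Omega_\theta^{\otimes r})^*\kappa_{0,r})\bigr)=C_{\exp^\diamond(\psi_\theta)}.
\end{equation*}

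Next I would show that $\theta\mapsto\exp^\diamond(\psi_\theta)$ is differentiable at $0$ in $(E)_\beta^*$ with derivative $\psi'_0\diamond\exp^\diamond(\psi_0)$. For this I apply Theorem \ref{generalconv} to the family of difference quotients $\theta^{-1}(\exp^\diamond(\psi_\theta)-\exp^\diamond(\psi_0))$. Their $S$-transforms read
\begin{equation*}
S(\exp^\diamond(\psi_\theta))(\xi)=\exp\bigl(\langle\kappa_{0,r},(\Omega_\theta\xi)^{\otimes r}\rangle\bigr),
\end{equation*}
and a direct pointwise computation shows that the difference quotients converge to $S(\psi'_0\diamond\exp^\diamond(\psi_0))(\xi)$, verifying condition (i). For condition (ii) I use that $\{\Omega_\theta\mid|\theta|\leq 1\}$ is equicontinuous on $E_\C$ by Proposition \ref{oneparam_remark}(ii)(c) and Lemma \ref{oneparam_equic}, so there exist $p\geq 0$ and $K>0$ with $|\Omega_\theta\xi|_p\leq K|\xi|_p$ uniformly in $|\theta|\leq 1$; combined with the continuity bound $|\langle\kappa_{0,r},\eta^{\otimes r}\rangle|\leq|\kappa_{0,r}|_{-p}|\eta|_p^r$ and Proposition \ref{oneparam_compact}(iii) for the difference quotient, this yields the required uniform bound of exponential type with exponent $r\leq\frac{2}{1-\beta}$.

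Finally, applying $C$ (continuous, in fact a topological isomorphism onto its image by Theorem \ref{subalgebra1}) to the differentiable curve $\theta\mapsto\exp^\diamond(\psi_\theta)$ transports the derivative to $L((E)_\beta,(E)_\beta)$, and using the homomorphism property
\begin{equation*}
C_{\psi'_0\diamond\exp^\diamond(\psi_0)}=C_{\psi'_0}\circ C_{\exp^\diamond(\psi_0)}=\Xi_{0,r}((\gamma_r(\Omega'))^*\kappa_{0,r})\circ\exp(\Xi_{0,r}(\kappa_{0,r}))
\end{equation*}
gives the claimed identity. The main obstacle is the uniform exponential bound in Step (ii) of Theorem \ref{generalconv} for the difference quotients: one must carefully quantify the $p$-norm estimates on $(\Omega_\theta\xi)^{\otimes r}-\xi^{\otimes r}$ uniformly in small $\theta$, which is where the equicontinuity of $\{\Omega_\theta\}$ on compacts of $\R$ is essential.
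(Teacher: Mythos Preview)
Your proposal is correct and follows essentially the same route as the paper: transfer the problem to $(E)_\beta^*$ via the algebra isomorphism $C$ of Theorem~\ref{subalgebra1}, prove differentiability of $\theta\mapsto\exp^\diamond(\psi_\theta)$ at $0$ by verifying the two conditions of Theorem~\ref{generalconv} for the difference quotients, and transport back. The only cosmetic difference is in the growth estimate: the paper applies the mean value theorem directly to the scalar function $\theta\mapsto\exp(\langle\kappa_{0,r},(\Omega_\theta^{\otimes r})\xi^{\otimes r}\rangle)$, bounding its derivative via equicontinuity of $\{\Omega_\theta\}_{|\theta|\leq 1}$ and $\{\Omega'\Omega_\theta\}_{|\theta|\leq 1}$, whereas you propose to control $(\Omega_\theta\xi)^{\otimes r}-\xi^{\otimes r}$ using Proposition~\ref{oneparam_compact}(iii); both yield the required uniform bound of order $\exp(C|\xi|_p^r)$.
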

\begin{proof}
First, because $\R$ is a metric space, it is enough to consider sequential convergence, i.e.~the limit process for any arbitrary sequence $(\theta_n)_{n\in\N}$ in $\R$ with $\lim\limits_{n\rightarrow \infty} \theta_n = 0$. Consider the sequence $\left\{\exp^\diamond\left\langle :x^{\otimes r}:,(\Omega_{\theta_n}^{\otimes r})^*\kappa_{0,r}\right\rangle\right\}_{n\in\N}$. Then the limit process will be transferred to $L((E)_\beta,((E)_\beta))$ by continuity, using Theorem \ref{subalgebra1}.\\
Now let $(\theta_n)_{n\in\N}$ be an arbitrary sequence in $\R$ with $\lim\limits_{n\rightarrow \infty} \theta_n = 0$ and $\theta_n\neq 0$ for all $n\in\N$. Further define
\begin{equation*}
\varphi_{n}:=\frac{\exp^\diamond (\left\langle :x^{\otimes r}:,(\Omega^{\otimes r}_{\theta_n})^*\kappa_{0,r}\right\rangle) - \exp^\diamond (\left\langle :x^{\otimes r}:,\kappa_{0,r}\right\rangle)}{\theta_n}
\end{equation*}
for all $n\in\N$.
Then, by Lemma \ref{wickconv}, we have $\varphi_{n}\in (E)_\beta^*$ for all $n\in\N$. We verify the conditions of Theorem \ref{generalconv}.\\
Let $\xi\in E_\C$. Then for all $n\in\N$
\begin{equation*}
S(\varphi_{n})(\xi) = \frac{\exp(\left\langle \kappa_{0,r},(\Omega^{\otimes r}_{\theta_n})\xi^{\otimes r}\right\rangle) - \exp(\left\langle \kappa_{0,r},\xi^{\otimes r}\right\rangle)}{\theta_n}
\end{equation*}
First note that for all $n\in\N$ the $S$-transform $S(\varphi_{n})$ is entire holomorphic. By Lemma \ref{tensor_of_transf} and Proposition \ref{oneparam_remark} we obtain $\left\{\Omega_\theta^{\otimes r}\right\}_{\theta\in\R}$ as a regular one-parameter subgroup of $(E_\C^{\otimes r})_{sym}$ with
\begin{equation*}
\frac{d}{d\theta} \Omega^{\otimes r}_{\theta} = \gamma_r(\Omega')\Omega_\theta^{\otimes r}.
\end{equation*}
Hence the function  $\theta\mapsto \left\langle \kappa_{0,r},(\Omega^{\otimes r}_{\theta})\xi^{\otimes r}\right\rangle$ is infinitely often differentiable on $\R$ and the same holds for $\theta\mapsto\exp(\left\langle \kappa_{0,r},(\Omega^{\otimes r}_{\theta})\xi^{\otimes r}\right\rangle)$ as composition of two infinitely many differentiable functions with
\begin{equation*}
\frac{d}{d\theta} \exp(\left\langle \kappa_{0,r},(\Omega^{\otimes r}_{\theta})\xi^{\otimes r}\right\rangle) = \left\langle \kappa_{0,r},\gamma_r(\Omega')\Omega_\theta^{\otimes r}\xi^{\otimes r}\right\rangle\cdot \exp(\left\langle \kappa_{0,r},(\Omega^{\otimes r}_{\theta})\xi^{\otimes r}\right\rangle)
\end{equation*}
Hence $\lim\limits_{n\rightarrow\infty}(S(\varphi_{n})(\xi))$ exists and we have:
\begin{equation*}
\lim\limits_{n \rightarrow \infty} S(\varphi_n)(\xi) = \left\langle \kappa_{0,r},\gamma_r(\Omega')\xi^{\otimes r}\right\rangle\cdot \exp(\left\langle \kappa_{0,r},\xi^{\otimes r}\right\rangle)
\end{equation*}
by the chain rule and Proposition \ref{basics2_group}\\
For the growth estimate let $p\geq 0$.
Without loss of generality let $\left|\theta_n\right|<1$ for all $n\in\N$. Then, since $\left\{\Omega_\theta\right\}_{\left|\theta\right|\leq 1}$ and $\left\{\Omega' \Omega_\theta\right\}_{\left|\theta\right|\leq 1}$ are compact by \ref{oneparam_equic}, there exists a $q\geq 0$ such that, $\forall \theta\in\R,\ \left|\theta\right|\leq 1$ we have $\left|\Omega_{\theta}(\xi)\right|_p\leq \left|\xi\right|_{p+q}$ and $\left|\Omega'\Omega_{\theta}(\xi)\right|_p\leq \left|\xi\right|_{p+q}$. Then, by the mean value theorem and by Proposition \ref{oneparam_remark} (ii) b), it follows for each $n\in\N$:
\begin{align*}
\left|S(\varphi_{n})(\xi)\right|\ &\leq \sup\limits_{\left|\theta\right|\leq 1} \left|\left\langle \kappa_{0,r},\gamma_r(\Omega')\Omega_\theta^{\otimes r}\xi^{\otimes r}\right\rangle\cdot \exp(\left\langle \kappa_{0,r},(\Omega^{\otimes r}_{\theta})\xi^{\otimes r})\right\rangle\right|\\
&\leq \left|\kappa_{0,r}\right|_{-p}\cdot r\left|\xi\right|^r_{p+q}\cdot \exp(\left|\kappa_{0,r}\right|_{-p}\cdot \left|\xi\right|^r_{p+q}))\\
&\leq \left|\kappa_{0,r}\right|_{-p}\cdot r(r!)\cdot \exp((1+\left|\kappa_{0,r}\right|_{-p})\cdot \left|\xi\right|^r_{p+q}))\\
&\leq \left|\kappa_{0,r}\right|_{-p}\cdot r(r!)\cdot \exp((1+\left|\kappa_{0,r}\right|_{-p})\cdot (1+\left|\xi\right|^{\frac{2}{1-\beta}}_{p+q})),\\
\end{align*}
for all $1>\beta\geq \frac{max(0,r-2)}{r}$.
The claim is a consequence of Theorem \ref{subalgebra1}.
\end{proof}
%
%
The same idea as in the proof of Proposition \ref{basics4_group} combined with Theorem \ref{oneparam_mainresult} leads to the following result.
\begin{proposition}\label{regular}
Let $r\in\N$, $1>\beta\geq\frac{1}{r}\max(0,r-2)$. Further let $\varphi \in (E)_\beta^*$ with $\varphi\sim (\varphi_0,\cdots,\varphi_r,0,0,0,\cdots)$.
Define $\Xi_\theta := \exp(\theta\ \mathscr{C}_{\varphi})$ for $\theta \in \R$. Then $\left\{\Xi_\theta : \theta\in\R\right\}$ is a regular one parameter subgroup of $GL((E)_\beta)$ with infinitesimal generator $\mathscr{C}_{\varphi}$.
\end{proposition}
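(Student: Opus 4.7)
The plan is to reduce the claim to a convergence assertion in $(E)_\beta^*$ via the topological isomorphism $C$ of Theorem \ref{subalgebra1}, verify differentiability at $\theta=0$ using Theorem \ref{generalconv}, and then appeal to Theorem \ref{oneparam_mainresult} for regularity.

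First, I would observe that under the hypothesis $\beta \geq \frac{1}{r}\max(0,r-2)$, Corollary \ref{expo} guarantees that $\exp(\theta\,\mathscr{C}_\varphi) \in L((E)_\beta,(E)_\beta)$ is well-defined for every $\theta \in \R$, since $\theta\varphi$ still has a chaos decomposition of length at most $r$. The algebra homomorphism property in Theorem \ref{subalgebra1} yields
\begin{equation*}
\exp(\theta\,\mathscr{C}_\varphi) = \mathscr{C}_{\exp^\diamond(\theta\varphi)},
\end{equation*}
and combining this with Corollary \ref{wickmult} translates $\exp^\diamond((\theta_1+\theta_2)\varphi) = \exp^\diamond(\theta_1\varphi) \diamond \exp^\diamond(\theta_2\varphi)$ into $\Xi_{\theta_1+\theta_2} = \Xi_{\theta_1} \circ \Xi_{\theta_2}$. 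With $\Xi_0 = \mathrm{Id}$ and $\Xi_\theta \circ \Xi_{-\theta} = \mathrm{Id}$, one obtains $\{\Xi_\theta\}_{\theta \in \R} \subset GL((E)_\beta)$.

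Next, I would establish differentiability at $\theta = 0$ with generator $\mathscr{C}_\varphi$. By the topological isomorphy of $C$ it suffices to prove
\begin{equation*}
\lim_{\theta \to 0} \frac{\exp^\diamond(\theta\varphi) - 1}{\theta} = \varphi \quad \text{in } (E)_\beta^*.
\end{equation*}
I apply Theorem \ref{generalconv} to $F_\theta(\xi) := \frac{\exp(\theta\, S(\varphi)(\xi)) - 1}{\theta}$. Pointwise, $F_\theta(\xi) \to S(\varphi)(\xi)$ as $\theta \to 0$. For the growth condition, the elementary inequality $|e^z - 1| \leq |z|\, e^{|z|}$ gives $|F_\theta(\xi)| \leq |S(\varphi)(\xi)|\, \exp(|\theta|\, |S(\varphi)(\xi)|)$; restricting to $|\theta| \leq 1$ and invoking the bound $|S(\varphi)(\xi)| \leq C_1 + C_2 |\xi|_p^r$ established inside the proof of Lemma \ref{wickconv}, one arrives at a majorant of the form $C \exp(K |\xi|_p^{2/(1-\beta)})$ uniformly in $\theta$ with $|\theta|\le 1$, since the hypothesis $\beta \geq (r-2)/r$ is exactly $r \leq \tfrac{2}{1-\beta}$.

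Once differentiability at $0$ with generator $\mathscr{C}_\varphi$ is in hand, Lemma \ref{oneparam_remark}(i) promotes this to differentiability at every $\theta \in \R$, and Theorem \ref{oneparam_mainresult} then yields regularity automatically, because $(E)_\beta$ is a nuclear (F)-space. The only delicate point is the growth estimate: pinning down the exponent is precisely what forces the hypothesis $\beta \geq \frac{1}{r}\max(0,r-2)$ to match the Potthoff--Streit exponent $\tfrac{2}{1-\beta}$ in Theorem \ref{generalconv}; everything else is essentially routine once the algebraic dictionary provided by Theorem \ref{subalgebra1} is exploited.
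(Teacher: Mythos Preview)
Your proposal is correct and follows essentially the same approach that the paper intends: the paper's proof of this proposition is only the one-line remark that ``the same idea as in the proof of Proposition~\ref{basics4_group} combined with Theorem~\ref{oneparam_mainresult} leads to the following result,'' and your argument is precisely an explicit instance of that idea --- transfer to $(E)_\beta^*$ via Theorem~\ref{subalgebra1}, verify convergence there using Theorem~\ref{generalconv} with the growth bound inherited from Lemma~\ref{wickconv}, and conclude regularity from Theorem~\ref{oneparam_mainresult}. The only cosmetic point is that Theorem~\ref{generalconv} is stated for sequences, so (as the paper does in the proof of Proposition~\ref{basics4_group}) you should note that sequential convergence suffices since $\R$ is metric.
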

\begin{example}
Let $y \in E^*$. As simple example consider the operator $\Xi_{0,1}(y)$. Recall that $D_y = \Xi_{0,1}(y)$ and the $translation\ operator$ $T_y$ = $\exp(D_y)$.\\
Let $z \in \C$. From $z \Delta_G$ = $\Xi_{0,2}(z\tau)$, where $\tau$ is the trace operator, we conclude that $\exp(z\Delta_G) \in L((E)_\beta,(E)_\beta)$, for all $0\leq\beta<1$.
\end{example}
%

%
 \section{Generalized Wick tensors and an application}
%
Our goal in this section is to rewrite Fourier-Gauss transforms as second quantization operators. This will be accomplished by suitable basis-transformations which will be explicitly calculated. For this purpose we introduce generalized Wick tensors. In this context Fourier-Gauss transforms appear as second quantization operators of the form 
$\left\{\Gamma_{\kappa_{0,r}}(\Omega_\theta)\right\}$. As an application we deduce explicitly the regular one parameter groups corresponding to the infinitesimal generators $a\Delta_G+bN$.\\
First we repeat some definitions.
Let $a,b\in\C,\ 0\leq \beta < 1$. The Fourier-Gauss transform $\mathfrak{G}_{a,b}(\varphi)$ of $\varphi\in(E)_\beta$ is defined to be the function
\begin{equation*}
\mathfrak{G}_{a,b}(\varphi)(y) = \int\limits_{E^*} \varphi(ax+by)\ d\mu(x)
\end{equation*}
The Fourier-Gauss transform is in $L((E)_\beta,(E)_\beta)$ and the operator symbol is given by
\begin{equation*}
\widehat{\mathfrak{G}_{a,b}}(\xi,\eta)= exp\left[\halb\left(a^2+b^2-1\right)\left\langle \xi,\xi\right\rangle+b\left\langle \xi,\eta\right\rangle\right],\quad \text{for all } \xi,\eta\in E_\C
\end{equation*}
see e.g.~\cite[Theorem 11.29, p. 168-169]{Kuo2}.
Hence
\begin{equation*}
\mathfrak{G}_{a,b}= \Gamma(b\ \mathrm{Id})\circ exp(\halb(a^2+b^2-1)\Delta_G)
\end{equation*}
By \cite[Lemma 11.22, p. 163]{Kuo2} the operator symbol of the Fourier transform is given by
\begin{equation*}
\widehat{\mathfrak{F}}(\xi,\eta)= exp(-i\left\langle \xi,\eta\right\rangle-\halb\left\langle \eta,\eta\right\rangle), \quad \text{for all } \xi,\eta\in E_\C.
\end{equation*}
Consequently
\begin{equation*}
\mathfrak{F}= exp(-\halb\Delta_G)^*\circ \Gamma(-i\ \mathrm{Id})
\end{equation*}
Moreover the operator symbol of the Fourier-Mehler transform is given by
\begin{equation*}
\widehat{\mathfrak{F_\theta}}(\xi,\eta)= exp(e^{i\theta}\left\langle \xi,\eta\right\rangle+ \frac{i}{2}e^{i\theta}sin\theta\left\langle \eta,\eta\right\rangle),\quad \text{for all } \xi,\eta\in E_\C,\ \theta\in\R,
\end{equation*}
see e.g.~\cite[11., p. 180]{Kuo2}.
Thus the Fourier-Mehler transform is given by the formula
\begin{equation*}
\mathfrak{F}_\theta= \left[exp(\frac{i}{2}e^{i\theta}sin\theta\ \Delta_G)\right]^*\circ \Gamma(e^{i\theta}\ \mathrm{Id}).
\end{equation*}
In the following let $\mathscr{G}_\theta$ denote the adjoint of the Fourier-Mehler transform $\mathfrak{F}_\theta$.\\
Finally by \cite[Proposition 4.6.9, p. 105]{Obata1} the operator symbol of the scaling operator is given by
\begin{equation*}
\widehat{S_\lambda}(\xi,\eta)= exp\left((\lambda^2-1)\left\langle \xi,\xi\right\rangle/2 + \lambda\left\langle  \xi,\eta\right\rangle\right),\quad \text{for all } \xi,\eta\in E_\C,\ \lambda\in\C.
\end{equation*}
and consequently
\begin{equation*}
S_\lambda = \Gamma(\lambda\ \mathrm{Id})\circ \exp\left(\frac{\lambda^2-1}{2}\ \Delta_G\right)
\end{equation*}
\begin{definition}
Let $m\in\N$ and $\kappa_{0,m} \in ((E_\C)^{\otimes m})^*_{sym}$. For $x\in E^*$ we define the renormalized tensor power $:x^{\otimes n}:_{\kappa_{0,m}}$ as follows:
\begin{equation*}
:x^{\otimes n}:_{\kappa_{0,m}} = \sum\limits_{k=0}^{\left\lfloor \frac{n}{m}\right\rfloor} \frac{n!}{(n-mk)!k!}\cdot \left(-\halb\right)^k\cdot x^{\otimes (n-mk)}\ \widehat{\otimes}\ \left(\kappa_{0,m}\right)^{\widehat{\otimes} k}
\end{equation*}
\end{definition}
For the usual tensor power $x^{\otimes n},\ x\in E^*$ it holds the following relation:, see \cite[Corollary 2.2.4, p. 25]{Obata1}
\begin{equation*}
x^{\otimes n} = \sum\limits_{k=0}^{\left\lfloor \frac{n}{2}\right\rfloor} \frac{n!}{(n-2k)!k!}\cdot \left(\halb\right)^k\cdot :x^{\otimes (n-2k)}:_{\tau}\ \widehat{\otimes}\ \left(\tau\right)^{\widehat{\otimes} k}
\end{equation*}
As a simple example we have for $x\in E$ the relation $:x^{\otimes n}:\ =\ :x^{\otimes n}:_\tau$.\\
More usually is the abbreviation $:x^{\otimes n}:_{\sigma^2}\ \stackrel{def}{=}\ :x^{\otimes n}:_{\sigma^2\tau}$.\\
Note that $\kappa_{0,m} = 0$ is permitted, e.g. $:x^{\otimes n}:_{0} = x^{\otimes n}$. But we don't permit $m=0$.\\\\\\
\noindent
There exists $\Theta$ in $L((E),(E))$ defined by $\Theta(\exp(\left\langle .,\xi\right\rangle)) := \Phi_\xi $, see e.g.~ \cite[Theorem 6.2]{Kuo2}. Since $\int\limits_{E^*} \Phi_\xi\ d\mu = 1$ we call $\Theta$ the renormalization operator.\\

\begin{proposition} \label{normal} 
\begin{enumerate}[(i)]
\item
$\Theta = \exp(-\halb \Delta_G)$
\item
For all $ f_m \in (E_\C^{\otimes m})_{sym}$ we have: $$ \Theta(\left\langle x^{\otimes m}, f_m\right\rangle) = \left\langle :x^{\otimes m}:, f_m\right\rangle$$ 
\end{enumerate}
\end{proposition}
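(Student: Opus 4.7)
The plan is to derive both statements by examining the action of $\Delta_G = \Xi_{0,2}(\tau)$ (cf.\ the Example after Proposition \ref{regular}) on the total family of exponential vectors, and then to deduce (ii) as a combinatorial consequence of (i). For (i), I first intend to compute, directly from the chaos decomposition $\Phi_\xi \sim (\xi^{\otimes n}/n!)_{n\in\NZ}$ and the defining formula for integral kernel operators,
\begin{equation*}
\Delta_G \Phi_\xi = \Xi_{0,2}(\tau)\Phi_\xi = \left|\xi\right|_0^2\, \Phi_\xi, \qquad \xi \in E_\C,
\end{equation*}
where the combinatorial factor $(n+2)!/n!$ in the definition cancels the $1/(n+2)!$ from the chaos coefficients while $\tau\otimes_2 \xi^{\otimes 2} = \left|\xi\right|_0^2$ supplies the eigenvalue. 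Iterating yields $\Delta_G^k \Phi_\xi = \left|\xi\right|_0^{2k}\Phi_\xi$; since $\exp(z\Delta_G) \in L((E)_\beta,(E)_\beta)$ (again from the cited Example), the exponential series may be evaluated termwise on $\Phi_\xi$, giving $\exp(-\halb\Delta_G)\Phi_\xi = \exp(-\halb\left|\xi\right|_0^2)\Phi_\xi$. Combined with the standard identity $\exp\langle \cdot,\xi\rangle = \exp(\halb\left|\xi\right|_0^2)\Phi_\xi$ and the defining relation $\Theta(\exp\langle \cdot,\xi\rangle) = \Phi_\xi$, both operators coincide on $\{\exp\langle \cdot,\xi\rangle : \xi \in E_\C\}$; since the linear span of this family is dense in $(E)$ and both maps are continuous, equality extends to all of $(E)$.

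For (ii), the plan is to use (i) and expand $\exp(-\halb\Delta_G)\langle x^{\otimes m},f_m\rangle$ as a Taylor series in $\Delta_G$. The key auxiliary identity
\begin{equation*}
\Delta_G \langle x^{\otimes m}, f_m\rangle = m(m-1)\langle x^{\otimes (m-2)}, \tau\otimes_2 f_m\rangle, \qquad f_m \in (E_\C^{\otimes m})_{sym},
\end{equation*}
I would verify either by substituting $f_m = \xi^{\otimes m}$ into $\Delta_G \exp\langle \cdot,\xi\rangle = \left|\xi\right|_0^2 \exp\langle \cdot,\xi\rangle$ and extending by polarization, or by expanding $x^{\otimes m}$ in the Wick basis (using the formula quoted just after the definition of $:x^{\otimes n}:_{\kappa_{0,m}}$ with $\kappa_{0,2} = \tau$) and applying $\Delta_G$ termwise via its known action on Wick products. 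Iterating produces $\Delta_G^k \langle x^{\otimes m}, f_m\rangle = \frac{m!}{(m-2k)!}\langle x^{\otimes(m-2k)}\widehat{\otimes}\tau^{\widehat\otimes k}, f_m\rangle$, and summing the Taylor series gives exactly
\begin{equation*}
\sum_{k=0}^{\lfloor m/2\rfloor}\frac{m!}{(m-2k)!\,k!}\Bigl(-\halb\Bigr)^k \langle x^{\otimes(m-2k)}\widehat\otimes \tau^{\widehat\otimes k}, f_m\rangle,
\end{equation*}
which by inversion of the Wick--ordinary basis change is $\langle :x^{\otimes m}:, f_m\rangle$.

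The main obstacle I anticipate lies entirely in (i), specifically in the analytic justification that $\exp(-\halb\Delta_G)$ may be commuted with the chaos series of $\Phi_\xi$ so that the eigenvalue relation passes through the exponential, and the verification that the span of ordinary exponentials is dense in $(E)$ in the appropriate sense. Both are handled by the continuity assertions for $\exp(z\Delta_G)$ established earlier, together with Corollary \ref{expo} and Proposition \ref{basics4_group}, which guarantee the required convergence in $L((E)_\beta,(E)_\beta)$. Once (i) is secured, part (ii) is merely combinatorial bookkeeping.
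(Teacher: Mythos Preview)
Your argument is correct. For (i) it coincides with the paper's: both establish that $\Phi_\xi$ is an eigenvector of $\Delta_G$ with eigenvalue $\langle\xi,\xi\rangle$ (the paper quotes Corollary \ref{eigenstate}, you recompute it) and conclude by density of exponential vectors. One notational slip: you write $\left|\xi\right|_0^2$ where the complex bilinear pairing $\langle\xi,\xi\rangle$ is meant; these agree only for real $\xi$.

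For (ii) the two proofs run in opposite directions. The paper applies $\Theta^{-1}=\exp(\halb\Delta_G)$ to the Wick monomial $\langle :x^{\otimes m}:,f_m\rangle$; since the defining formula for $\Xi_{0,2n}(\tau^{\otimes n})$ is stated relative to Wick-ordered inputs, each term is read off immediately and the sum collapses to $\langle x^{\otimes m},f_m\rangle$ via the known inverse relation. You instead apply $\Theta=\exp(-\halb\Delta_G)$ directly to $\langle x^{\otimes m},f_m\rangle$, which forces you to first establish $\Delta_G\langle x^{\otimes m},f_m\rangle = m(m-1)\langle x^{\otimes(m-2)},\tau\otimes_2 f_m\rangle$ by polarization or by expanding in the Wick basis. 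Both routes are valid and amount to the same combinatorics; the paper's is marginally more direct because it never leaves the Wick-ordered setting in which integral kernel operators are defined, while yours has the merit of computing $\Theta$ itself rather than its inverse.
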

\begin{proof}
By Corollary \ref{eigenstate} we have for all $\xi \in E_\C$: \\
\begin{align*}
\exp(\halb\Delta_G)\Phi_\xi &=\ exp(\halb\left\langle \xi,\xi\right\rangle)\Phi_\xi\\
 &=\ \exp(\halb\left\langle \xi,\xi\right\rangle)\exp(-\halb\left\langle \xi,\xi\right\rangle)e^{\left\langle .,\xi\right\rangle}\\
&=\ e^{\left\langle .,\xi\right\rangle} 
\end{align*}
By Proposition \ref{regular} $\Theta$ is invertible and the first statement is proved. To proof the second, let $m \in \NZ$. Note that for $m < 2n$ we have 
\begin{equation*}
\Xi_{0,2n}(\tau^{\otimes n}) (\left\langle :x^{\otimes m}:, f_m\right\rangle) = 0
\end{equation*}
Thus $\Theta^{-1} = \exp(\halb\Delta_G) = \sum\limits_{n=0}^{\infty} \frac{1}{2^n n!}\Xi_{0,2n}(\tau^{\otimes n})$.\\
Let $m \geq 2n\ and\ \delta_{i,j}$ be the Kronecker symbol. Then by \cite[Proposition 4.3.3, Eq. (4.23), p. 82]{Obata1} it holds:
\begin{align*}
\Xi_{0,2n}(\tau^{\otimes n}) (\left\langle :x^{\otimes m}:, f_m\right\rangle) &=\ \sum\limits_{k=0}^{\infty} \frac{(k+2n)!}{k!} (\left\langle :x^{\otimes k}:,\tau^{\otimes n} \otimes_{2n} \delta_{k+2n,m}\cdot f_{m}\right\rangle)\\
&=\ \frac{m!}{(m-2n)!} \left\langle :x^{\otimes m-2n}:, \tau^{\otimes n} \otimes_{2n} f_m\right\rangle\\
&=\ \frac{m!}{(m-2n)!} \left\langle :x^{\otimes m-2n}:\ \otimes\ \tau^{\otimes n}, f_m\right\rangle\\
&=\ \frac{m!}{(m-2n)!} \left\langle :x^{\otimes m-2n}:\ \hat \otimes\ \tau^{\hat\otimes n}, f_m\right\rangle\\
\end{align*}
where the last equation is due to the symmetricity of $f_m$.\\
Then
\begin{align*}
\exp(\halb\Delta_G)(\left\langle :x^{\otimes m}:, f_m\right\rangle)  &=\ \left\langle \sum\limits_{n=0}^{\left\lfloor{\frac{m}{2}}\right\rfloor} \frac{m!}{(m-2n)!n!2^n} :x^{\otimes m-2n}:\ \hat\otimes\ \tau^{\hat\otimes n}, f_m\right\rangle \\
&=\ \left\langle x^{\otimes m},f_m\right\rangle,
\end{align*}
compare also\cite[Corollary 2.2.4]{Obata1}.
\end{proof}
Note that $e^{\left\langle \cdot,\xi\right\rangle} \in (E)$ since $\Phi_\xi \in (E)$.
\begin{corollary}
Let $\xi \in E_\C$. Then the series $e^{\left\langle \cdot,\xi\right\rangle} = \sum\limits_{n=0}^{\infty} \frac{1}{n!}\left\langle .,\xi\right\rangle^n$ converges in $(E)$.
\end{corollary}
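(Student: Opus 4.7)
The plan is to exploit the renormalization operator $\Theta$ and its inverse, which by Proposition \ref{normal} (i) equal $\exp(-\halb\Delta_G)$ and $\exp(\halb\Delta_G)$ respectively, and which both lie in $L((E),(E))$ by Proposition \ref{regular} (applied to $\varphi=\mp\halb\tau$, which has Wiener--It\^o chaos decomposition of length $\le 2$ so the case $\beta=0$ is admissible). The Wick exponential $\Phi_\xi = \sum_{n=0}^\infty \frac{1}{n!}\langle :x^{\otimes n}:,\xi^{\otimes n}\rangle$ converges in $(E)$ by the standard estimate $|\Phi_\xi|_{p,0}^2 = \sum_{n\ge 0} n! \cdot \frac{1}{(n!)^2}|\xi^{\otimes n}|_p^2 = \exp(|\xi|_p^2) < \infty$ for every $p\in\R$.

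The key observation is that by Proposition \ref{normal} (ii) applied to $f_n = \frac{1}{n!}\xi^{\otimes n}$, one has
\begin{equation*}
\Theta^{-1}\!\left(\tfrac{1}{n!}\langle :x^{\otimes n}:,\xi^{\otimes n}\rangle\right) = \tfrac{1}{n!}\langle x^{\otimes n},\xi^{\otimes n}\rangle = \tfrac{1}{n!}\langle \cdot,\xi\rangle^n,
\end{equation*}
so the partial sums of $\sum_{n\ge 0}\frac{1}{n!}\langle \cdot,\xi\rangle^n$ are nothing but the images under the continuous operator $\Theta^{-1}\in L((E),(E))$ of the partial sums of $\Phi_\xi$.

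Therefore I would argue: since $\sum_{n=0}^{N}\frac{1}{n!}\langle :x^{\otimes n}:,\xi^{\otimes n}\rangle \to \Phi_\xi$ in $(E)$ as $N\to\infty$, the continuity of $\Theta^{-1}$ gives
\begin{equation*}
\sum_{n=0}^{N}\tfrac{1}{n!}\langle \cdot,\xi\rangle^n = \Theta^{-1}\!\Bigl(\sum_{n=0}^{N}\tfrac{1}{n!}\langle :x^{\otimes n}:,\xi^{\otimes n}\rangle\Bigr) \;\longrightarrow\; \Theta^{-1}(\Phi_\xi) = e^{\langle \cdot,\xi\rangle} \quad\text{in }(E),
\end{equation*}
which is precisely the asserted convergence.

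There is no real obstacle here beyond assembling the ingredients already proved: the heavy lifting was done in Proposition \ref{normal} (identifying $\Theta$ and its action on Wick tensors) and in Proposition \ref{regular} (which guarantees $\exp(\halb\Delta_G)\in L((E),(E))$). The only minor point worth double-checking is that $\Theta^{-1}$ is genuinely applied termwise, i.e.\ that the partial sums of the Wick expansion of $\Phi_\xi$ are $\sum_{n=0}^N \frac{1}{n!}\langle :x^{\otimes n}:,\xi^{\otimes n}\rangle$; this is immediate from the definition of $\Phi_\xi$.
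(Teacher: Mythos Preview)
Your proof is correct and follows essentially the same approach as the paper: apply the continuous operator $\Theta^{-1}=\exp(\halb\Delta_G)\in L((E),(E))$ termwise to the convergent Wick expansion of $\Phi_\xi$, using Proposition \ref{normal} to identify the images. The paper's version is more terse but the argument is identical.
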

\begin{proof}
Let $\xi \in E_\C$. Since $$\Phi_\xi = \sum\limits_{n=0}^{\infty} \frac{1}{n!} \left\langle :x^{\otimes n}:, \xi^{\otimes n}\right\rangle$$ converges in $(E)$ also $$\exp(\halb\Delta_G)\Phi_\xi = \sum\limits_{n=0}^{\infty} \frac{1}{n!} \left\langle x^{\otimes n}, \xi^{\otimes n}\right\rangle$$ converges in $(E)$.
\end{proof}
%
%
The following proposition generalizes Proposition \ref{normal}.
\begin{proposition}\label{general_normal} 
Let $r\in\N$ and $\kappa_{0,r} \in ((E_\C)^{\otimes r})^*_{sym}$. For all $f_m \in (E_\C^{\otimes m})_{sym}$ we have:
\begin{equation*}
\exp(-\halb \Xi_{0,r}(\kappa_{0,r}))(\left\langle x^{\otimes m}, f_m\right\rangle) = \left\langle :x^{\otimes m}:_{\kappa_{0,r}}, f_m\right\rangle
\end{equation*}
\end{proposition}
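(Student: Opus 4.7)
The strategy is to reduce the identity to the exponential-vector setting, where $\Xi_{0,r}(\kappa_{0,r})$ acts diagonally, and then to extract the $m$-th symmetric power by a formal $t$-expansion. A final polarization argument lifts the result from $f_m=\xi^{\otimes m}$ to arbitrary symmetric $f_m$.

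First I observe that $\Xi_{0,r}(\kappa_{0,r})$ coincides with the convolution operator $\mathscr{C}_\varphi$ attached to $\varphi = \langle :x^{\otimes r}:,\kappa_{0,r}\rangle\in (E)_\beta^*$: by Proposition \ref{conv_sum_op}, $\mathscr{C}_\varphi=\sum_n\Xi_{0,n}(\varphi_n)$, and for this $\varphi$ only $\varphi_r=\kappa_{0,r}$ is nonzero. Since $S(\varphi)(\xi)=\langle\kappa_{0,r},\xi^{\otimes r}\rangle$, Corollary \ref{eigenstate} yields
\[
\Xi_{0,r}(\kappa_{0,r})\,\Phi_\xi \;=\; \langle\kappa_{0,r},\xi^{\otimes r}\rangle\,\Phi_\xi, \qquad \xi\in E_\C.
\]
Proposition \ref{regular} guarantees that $\exp(-\halb\,\Xi_{0,r}(\kappa_{0,r}))\in L((E)_\beta,(E)_\beta)$ for $\beta\geq\max(0,r-2)/r$ and is obtained as an absolutely convergent power series in $\Xi_{0,r}(\kappa_{0,r})$, so $\Phi_\xi$ remains an eigenvector:
\[
\exp\bigl(-\halb\,\Xi_{0,r}(\kappa_{0,r})\bigr)\Phi_\xi \;=\; \exp\bigl(-\halb\,\langle\kappa_{0,r},\xi^{\otimes r}\rangle\bigr)\,\Phi_\xi.
\]

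Next, I replace $\xi$ by $t\xi$ with $t\in\C$ and use $e^{\langle\cdot,t\xi\rangle}=\exp(\halb t^2\langle\xi,\xi\rangle)\,\Phi_{t\xi}$ from Proposition \ref{normal}(i); the Gaussian prefactors cancel and produce the generating identity
\[
\exp\bigl(-\halb\,\Xi_{0,r}(\kappa_{0,r})\bigr)\bigl(e^{\langle\cdot,t\xi\rangle}\bigr) \;=\; \exp\bigl(-\halb\,t^r\langle\kappa_{0,r},\xi^{\otimes r}\rangle\bigr)\,e^{\langle\cdot,t\xi\rangle}.
\]
I then expand both sides in powers of $t$. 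On the left, the corollary following Proposition \ref{normal} gives $e^{\langle\cdot,t\xi\rangle}=\sum_{m\geq 0}\frac{t^m}{m!}\langle x^{\otimes m},\xi^{\otimes m}\rangle$ in $(E)$, and continuity of the operator (on $(E)_\beta$) lets me pull it through the sum. On the right, the Cauchy product of the two exponentials, re-indexed by $m=rk+j$, yields
\[
\sum_{m\geq 0}\frac{t^m}{m!}\sum_{k=0}^{\lfloor m/r\rfloor}\frac{m!}{(m-rk)!\,k!}(-\halb)^k\bigl\langle x^{\otimes(m-rk)}\,\hat\otimes\,\kappa_{0,r}^{\hat\otimes k},\,\xi^{\otimes m}\bigr\rangle,
\]
whose inner sum is, by definition, $\langle :x^{\otimes m}:_{\kappa_{0,r}},\xi^{\otimes m}\rangle$. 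Equating the coefficient of $t^m$ proves the claim when $f_m=\xi^{\otimes m}$.

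Both sides of the proposition depend $\C$-linearly and continuously on $f_m\in (E_\C^{\otimes m})_{sym}$, and $\{\xi^{\otimes m}\mid \xi\in E_\C\}$ spans a dense subspace of this space (standard polarization), so the identity extends to every symmetric $f_m$. The main technical obstacle is the interchange of $\exp(-\halb\,\Xi_{0,r}(\kappa_{0,r}))$ with the $t$-series $\sum_m \frac{t^m}{m!}\langle x^{\otimes m},\xi^{\otimes m}\rangle$, which is handled by combining the $(E)$-convergence of this exponential series with the continuity of the operator on $(E)_\beta$ supplied by Proposition \ref{regular}; everything else is bookkeeping.
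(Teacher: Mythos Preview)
Your argument is correct in substance but follows a genuinely different route from the paper. The paper proceeds by a direct term-by-term computation: it expands $\exp(-\halb\,\Xi_{0,r}(\kappa_{0,r}))=\sum_{n\ge 0}\frac{1}{n!}(-\halb)^n\,\Xi_{0,rn}(\kappa_{0,r}^{\otimes n})$, observes that $\Xi_{0,rn}(\kappa_{0,r}^{\otimes n})$ annihilates $\langle x^{\otimes m},f_m\rangle$ once $rn>m$, and then evaluates the finitely many surviving terms (conjugating by $\exp(\pm\halb\Delta_G)$ to pass between $x^{\otimes m}$ and $:x^{\otimes m}:$). The definition of $:x^{\otimes m}:_{\kappa_{0,r}}$ then drops out directly. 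This is entirely algebraic on the level of a fixed $m$ and needs no convergence or polarization. Your approach instead exploits the eigenvector property of $\Phi_\xi$ for convolution operators, derives a generating identity in the parameter $t$, and reads off coefficients before polarizing. It is more structural and makes the origin of the combinatorial formula transparent, at the cost of two analytic steps (series interchange and density of tensor powers) that the paper's computation avoids altogether.

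One small point to tighten: when $r\ge 3$ you are forced into $(E)_\beta$ with $\beta>0$, and the Corollary you cite only asserts convergence of $\sum_m\frac{t^m}{m!}\langle x^{\otimes m},\xi^{\otimes m}\rangle$ in $(E)$. Since $(E)_\beta\subsetneq (E)$ in that case, convergence in $(E)$ together with continuity of the operator on $(E)_\beta$ is not by itself enough to justify the interchange. The fix is immediate: $\exp(\halb\Delta_G)\in L((E)_\beta,(E)_\beta)$ for every $0\le\beta<1$, and the Wick series $\Phi_{t\xi}=\sum_m\frac{t^m}{m!}\langle :x^{\otimes m}:,\xi^{\otimes m}\rangle$ converges in $(E)_\beta$, so applying $\exp(\halb\Delta_G)$ gives convergence of your series in $(E)_\beta$ as well. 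With that adjustment, your interchange, coefficient matching, and polarization all go through.
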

\begin{proof}
Let $m \in \NZ$. Note that for $m < rn$ we have 
\begin{equation*}
\Xi_{0,rn}(\kappa_{0,r}^{\otimes n}) (\left\langle :x^{\otimes m}:, f_m\right\rangle) = 0
\end{equation*}
We have $\exp(-\halb \Xi_{0,r}(\kappa_{0,r})) = \sum\limits_{n=0}^{\infty} \frac{1}{n!}(-\halb)^n\Xi_{0,rn}(\kappa_{0,r}^{\otimes n})$.\\
Let $m \geq rn\text{ and }\delta_{i,j}$ be the Kronecker symbol. Then by Theorem \ref{subalgebra1} and Proposition \ref{normal}, using \cite[Proposition 4.3.3, Eq. (4.23), p. 82]{Obata1}, we have:
\begin{multline*}
\Xi_{0,rn}(\kappa_{0,r}^{\otimes n}) (\left\langle x^{\otimes m}, f_m\right\rangle)=\exp(\halb\Delta_G)\circ \Xi_{0,rn}(\kappa_{0,r}^{\otimes n}) (\left\langle :x^{\otimes m}:, f_m\right\rangle)\\
=\ \exp(\halb\Delta_G)\ \sum\limits_{k=0}^{\infty} \frac{(k+rn)!}{k!} (\left\langle :x^{\otimes k}:,\kappa_{0,r}^{\otimes n} \otimes_{rn} \delta_{k+rn,m}\cdot f_{m}\right\rangle)\\
=\ \frac{m!}{(m-rn)!} \left\langle x^{\otimes m-rn}, \kappa_{0,r}^{\otimes n} \otimes_{rn} f_m\right\rangle\\
=\ \frac{m!}{(m-rn)!} \left\langle x^{\otimes m-rn}\ \otimes\ \kappa_{0,r}^{\otimes n},\ f_m\right\rangle=\ \frac{m!}{(m-rn)!} \left\langle x^{\otimes m-rn}\ \hat\otimes\ \kappa_{0,r}^{\hat\otimes n}, f_m\right\rangle
\end{multline*}
where the last equation follows because $f_m$ is symmetric.\\

Then by the above definition we have:
\begin{align*}
\exp(-\halb \Xi_{0,r}(\kappa_{0,r}))(\left\langle x^{\otimes m}, f_m\right\rangle)  &=\ \left\langle \sum\limits_{n=0}^{\left\lfloor{\frac{m}{r}}\right\rfloor} \frac{m!}{(m-rn)!n!}(-\halb)^n x^{\otimes m-rn}\ \hat\otimes\ \kappa_{0,r}^{\hat\otimes n}, f_m\right\rangle \\
&=\ \left\langle :x^{\otimes m}:_{\kappa_{0,r}}, f_m\right\rangle
\end{align*}
\end{proof}
\begin{notation}
In the following we use $0\cdot\tau$ in order to express that we consider $0\in (E^{\otimes 2})^*$.
\end{notation}
\begin{theorem}[Representation theorem] \label{ren_representation}
Let $r\in\N$, $1>\beta\geq\frac{max(0,r-2)}{r}$ and $\kappa_{0,r} \in ((E_\C)^{\otimes r})^*_{sym}$. Then each $(\varphi_n)_{n\in\NZ}\sim\varphi\in (E)_\beta$ has a unique decomposition
\begin{equation*}
 \varphi(x) = \sum\limits_{n=0}^{\infty} \left\langle :x^{\otimes n}:_{\kappa_{0,r}},(\psi_n)_{\kappa_{0,r}}\right\rangle, quad \text{ for all }x \in E^*
\end{equation*}
with $(\psi_n)_{\kappa_{0,r}}\in (E_\C^{\otimes n})_{sym}$, which we denote as $\kappa_{0,r}$ - representation of $\varphi$.
\end{theorem}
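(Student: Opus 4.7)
The plan is to construct an explicit topological automorphism of $(E)_\beta$ that carries the standard Wick tensors $\{:x^{\otimes n}:\}$ onto the generalized ones $\{:x^{\otimes n}:_{\kappa_{0,r}}\}$, and then to transfer the classical Wiener-Itô chaos decomposition through it. The natural candidate is
\[
T := \exp\bigl(-\tfrac{1}{2}\,\Xi_{0,r}(\kappa_{0,r})\bigr)\circ\exp\bigl(\tfrac{1}{2}\Delta_G\bigr).
\]
Since $\Delta_G=\Xi_{0,2}(\tau)$ and $\Xi_{0,r}(\kappa_{0,r})$ are convolution operators (Proposition \ref{conv_sum_op}), Corollary \ref{expo} together with Proposition \ref{regular}, applied once with $r=2$ and once with the given $r$, ensures that both exponentials lie in $GL((E)_\beta)$ as soon as $\beta\geq\max(0,r-2)/r$, with inverses given by the exponentials of opposite sign. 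Hence $T\in GL((E)_\beta)$.

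The key intertwining identity is that $T$ sends $\langle :x^{\otimes n}:,f_n\rangle$ to $\langle :x^{\otimes n}:_{\kappa_{0,r}},f_n\rangle$ for every $f_n\in(E_\C^{\otimes n})_{sym}$: Proposition \ref{normal}\,(ii) yields $\exp(\halb\Delta_G)\langle :x^{\otimes n}:,f_n\rangle=\langle x^{\otimes n},f_n\rangle$, and then Proposition \ref{general_normal} delivers $\exp(-\halb\,\Xi_{0,r}(\kappa_{0,r}))\langle x^{\otimes n},f_n\rangle=\langle :x^{\otimes n}:_{\kappa_{0,r}},f_n\rangle$.

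For existence, set $\tilde\varphi:=T^{-1}\varphi\in(E)_\beta$ and apply the ordinary Wiener-Itô chaos decomposition to obtain $\tilde\varphi=\sum_{n=0}^\infty\langle :x^{\otimes n}:,(\psi_n)_{\kappa_{0,r}}\rangle$ in $(E)_\beta$ with uniquely determined $(\psi_n)_{\kappa_{0,r}}\in(E_\C^{\otimes n})_{sym}$. Continuity of $T$ combined with the intertwining identity then yields
\[
\varphi=T\tilde\varphi=\sum_{n=0}^\infty\bigl\langle :x^{\otimes n}:_{\kappa_{0,r}},(\psi_n)_{\kappa_{0,r}}\bigr\rangle
\]
with convergence in $(E)_\beta$. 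For uniqueness, if two such expansions represent the same $\varphi$, applying the continuous operator $T^{-1}$ to the convergent partial sums returns two standard Wick chaos expansions of the single element $T^{-1}\varphi$, so uniqueness of the Wiener-Itô chaos decomposition forces the coefficients to agree term by term.

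The only delicate point, and precisely the reason for the lower bound $\beta\geq\max(0,r-2)/r$, is the verification that $\exp(\pm\halb\,\Xi_{0,r}(\kappa_{0,r}))$ is genuinely a topological automorphism of $(E)_\beta$; this is exactly what Corollary \ref{expo} and Proposition \ref{regular} furnish, and once it is in hand the rest of the argument is purely a matter of continuity and the classical chaos decomposition.
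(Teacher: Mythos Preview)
Your proof is correct and follows essentially the same approach as the paper: both arguments hinge on the fact that $\exp(\pm\halb\Delta_G)$ and $\exp(\pm\halb\,\Xi_{0,r}(\kappa_{0,r}))$ are topological automorphisms of $(E)_\beta$ (via Corollary~\ref{expo} and Proposition~\ref{regular}) which intertwine the standard and generalized Wick tensors by Propositions~\ref{normal} and~\ref{general_normal}, so that existence and uniqueness are inherited from the ordinary Wiener--It\^o chaos decomposition. The paper's proof is simply a terser version of yours, recording only the chain of bijections, whereas you spell out the existence and uniqueness steps explicitly.
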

\begin{proof}
First note that $\varphi$ has a unique Wiener-Itô chaos decomposition $(\varphi_n)_{n\in\NZ}$, see e.g.~\cite[Theorem 3.1.5]{Obata1}.
Then the existence and uniqueness of the above representation follows by the bijectivity of  $\exp(-\halb\Delta_G)\ \circ\ \exp(\halb \Xi_{0,r}(\kappa_{0,r}))$. Recall Corollary \ref{expo} and the following chain of mappings:
\begin{equation*} 
\left\langle :x^{\otimes n}:_{\kappa_{0,r}},\varphi_n\right\rangle\stackrel{\exp(\halb \Xi_{0,r}(\kappa_{0,r}))}{\longmapsto}\left\langle x^{\otimes n},\varphi_n\right\rangle\stackrel{\exp(-\halb\Delta_G)}{\longmapsto}\left\langle :x^{\otimes n}:,\varphi_n\right\rangle
\end{equation*}
\end{proof}
\begin{remark}
 The $S$-transform of $\varphi\in (E)$ is a restriction of the $0\tau$ - representation of $\exp(\halb\Delta_G)\varphi$ from $E^*$ to $E$.\\
Note that we do not claim that the above decomposition is an orthogonal decomposition with respect to the measure $\mu$, like the chaos decomposition. We claim only the uniqueness.
\end{remark}
\begin{definition}\label{def_ren_op}
Let $r\in\N$, $1>\beta\geq\frac{max(0,r-2)}{r}$, $\kappa_{0,r}\in (E_\C^{\otimes r})^*_{sym}$. For $T\in L((E)_\beta,(E)_\beta)$, we define:
\begin{equation*}
T_{\kappa_{0,r}} := \exp(\halb(\Delta_G-\Xi_{0,r}(\kappa_{0,r}))\circ T \circ \exp(-\halb(\Delta_G-\Xi_{0,r}(\kappa_{0,r})))
\end{equation*}
$T_{\kappa_{0,r}}$ is called the renormalization of $T$ corresponding to $\kappa_{0,r}$. Obviously  $T_{\kappa_{0,r}}\in L((E)_\beta,(E)_\beta)$.
For $\Omega\in L((E)_\beta)$ we abbreviate 
$$\Gamma_{\kappa_{0,r}}(\Omega):=(\Gamma(\Omega))_{\kappa_{0,r}}$$ resp. 
$$d\Gamma_{\kappa_{0,r}}(\Omega):=(d\Gamma(\Omega))_{\kappa_{0,r}}$$.\\
\end{definition}
\begin{remark}
It is clear, that $T_{\kappa_{0,r}}$ acts formally on white noise test functions in $\kappa_{0,r}$ - representation like $T$ on white noise test functions in the standard representation as Boson Fock space. We precise this statement by the following proposition.
\end{remark}
\begin{proposition}
Let $r\in\N$, $1>\beta\geq\frac{max(0,r-2)}{r}$, $\kappa_{0,r}\in (E_\C^{\otimes r})^*_{sym}$. For $T\in L((E)_\beta,(E)_\beta)$ and $\varphi \in (E)_\beta$ with $\varphi = \sum\limits_{n=0}^{\infty} \left\langle :x^{\otimes n}:,f_n\right\rangle$, we use the notation $T(\varphi)= \sum\limits_{n=0}^{\infty}\left\langle :x^{\otimes n}:,(f_n)_T\right\rangle$. Then it follows:
\begin{equation*}
T_{\kappa_{0,r}}(\sum\limits_{n=0}^{\infty} \left\langle :x^{\otimes n}:_{\kappa_{0,r}},f_n\right\rangle = \sum\limits_{n=0}^{\infty} \left\langle :x^{\otimes n}:_{\kappa_{0,r}},(f_n)_T\right\rangle
\end{equation*}
\end{proposition}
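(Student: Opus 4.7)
The plan is to exploit the two intertwining identities already established in Propositions \ref{normal} and \ref{general_normal}. Writing $U := \exp(-\halb(\Delta_G - \Xi_{0,r}(\kappa_{0,r})))$ so that $T_{\kappa_{0,r}} = U^{-1}\circ T\circ U$, the central observation will be that $U$ is precisely the change-of-basis operator sending the $\kappa_{0,r}$-representation to the ordinary Wick representation.

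To see this, I first need to verify that $\Delta_G$ and $\Xi_{0,r}(\kappa_{0,r})$ commute. Both are pure-annihilation integral kernel operators of type $(0,m)$, acting on Fock coordinates by $(f_n)\mapsto \frac{(n+m)!}{n!}\,\kappa\,\hat\otimes_m f_{n+m}$, and a short direct computation (exploiting symmetry of the $f_n$) shows the two possible orders of composition coincide; equivalently, their operator symbols depend only on $\xi$ and therefore multiply. Consequently $U$ factorizes as $\exp(-\halb\Delta_G)\circ \exp(\halb\,\Xi_{0,r}(\kappa_{0,r}))$. By Proposition \ref{regular} the family $\{\exp(\theta\,\Xi_{0,r}(\kappa_{0,r}))\}_{\theta\in\R}$ is a genuine one-parameter subgroup of $GL((E)_\beta)$, so $\exp(\halb\,\Xi_{0,r}(\kappa_{0,r}))$ is the inverse of $\exp(-\halb\,\Xi_{0,r}(\kappa_{0,r}))$; reading Proposition \ref{general_normal} backwards, it therefore sends $\langle :\!x^{\otimes n}\!:_{\kappa_{0,r}}, f_n\rangle$ to $\langle x^{\otimes n}, f_n\rangle$. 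Proposition \ref{normal} then transforms this into $\langle :\!x^{\otimes n}\!:, f_n\rangle$. Thus $U$ intertwines the two representations term by term, and the same argument applied to $U^{-1}$ provides the reverse intertwiner.

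With these two key identities in hand the rest is a bookkeeping step. The partial sums of the $\kappa_{0,r}$-representation of $\varphi$ converge in $(E)_\beta$ by Theorem \ref{ren_representation}, and since $U$, $T$, $U^{-1}$ all lie in $L((E)_\beta,(E)_\beta)$ they interchange with that convergent sum. Hence applying $U$ term by term produces $\sum_n \langle :\!x^{\otimes n}\!:, f_n\rangle$; applying $T$ replaces $f_n$ by $(f_n)_T$ by the very definition of the latter; and applying $U^{-1}$ finally yields $\sum_n \langle :\!x^{\otimes n}\!:_{\kappa_{0,r}}, (f_n)_T\rangle$, which is the claim.

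The principal obstacle I anticipate is justifying the factorization of $U$, i.e.\ the commutativity of $\Delta_G$ with $\Xi_{0,r}(\kappa_{0,r})$; once that step is in place the proof reduces cleanly to Propositions \ref{normal} and \ref{general_normal}, together with the continuity of the three operators involved.
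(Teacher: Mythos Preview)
Your proposal is correct and follows essentially the same route as the paper: factor $U=\exp(-\halb(\Delta_G-\Xi_{0,r}(\kappa_{0,r})))$ as $\exp(-\halb\Delta_G)\circ\exp(\halb\Xi_{0,r}(\kappa_{0,r}))$, then invoke Propositions \ref{normal} and \ref{general_normal} to see that $U$ carries the $\kappa_{0,r}$-representation to the standard Wick representation term by term. The only minor difference is how the factorization is justified: you argue commutativity of $\Delta_G$ and $\Xi_{0,r}(\kappa_{0,r})$ directly (via Fock coordinates or operator symbols), whereas the paper simply cites Corollary \ref{wickmult} and Theorem \ref{subalgebra1}, which say that convolution operators form a commutative algebra isomorphic to $((E)_\beta^*,\diamond)$, so the exponential of a sum splits automatically---this dispatches your ``principal obstacle'' without any separate computation.
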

\begin{proof}
The expression $\sum\limits_{n=0}^{\infty} \left\langle :x^{\otimes n}:_{\kappa_{0,r}},f_n\right\rangle$ is well defined because 
$$\sum\limits_{n=0}^{\infty} \left\langle :x^{\otimes n}:_{\kappa_{0,r}},f_n\right\rangle= \exp(-\halb \Xi_{0,r}(\kappa_{0,r}))\ \circ\ \exp(\halb\Delta_G)(\sum\limits_{n=0}^{\infty} \left\langle :x^{\otimes n}:,f_n\right\rangle).$$
By Corollary \ref{wickmult} and Theorem \ref{subalgebra1} we have
\begin{equation*}
\exp(-\halb(\Delta_G-\Xi_{0,r}(\kappa_{0,r})))= \exp(-\halb\Delta_G)\ \circ\ \exp(\halb \Xi_{0,r}(\kappa_{0,r})).
\end{equation*}
Then using Proposition \ref{normal} and Proposition \ref{general_normal}, the claim follows with the same idea as in the proof of Theorem \ref{ren_representation}.
\end{proof}
%
%
The following formula is suitable for the calculation of renormalized second quantization operators. Recall that for all $T\in L(E_\C,E_\C)$, we have $\Gamma(T)\in L((E)_\beta,(E)_\beta)$, where $0\leq\beta<1$.
\begin{corollary}\label{ren_sec_quan}
Let $r\in\N$, $1>\beta\geq\frac{max(0,r-2)}{r}$, $\kappa_{0,r}\in (E_\C^{\otimes r})^*_{sym}$. Then for all $T\in L(E_\C,E_\C)$ we have
\begin{multline*}
\Gamma_{\kappa_{0,r}}(T)= \Gamma(T) \circ \exp(\halb\Xi_{0,2}((T^{\otimes 2}-\mathrm{Id}^{\otimes 2})^*(\tau)))\\
\circ
\exp(-\halb\Xi_{0,r}((T^{\otimes r}-\mathrm{Id}^{\otimes r})^*(\kappa_{0,r}))).
\end{multline*}
\end{corollary}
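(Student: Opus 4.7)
The plan is to unfold the definition of $\Gamma_{\kappa_{0,r}}(T)$ and to move the two exponentials through $\Gamma(T)$ by a standard intertwining relation. First I would observe that $\Delta_G = \Xi_{0,2}(\tau)$ and $\Xi_{0,r}(\kappa_{0,r})$ commute on $(E)_\beta$: by Proposition \ref{conv_sum_op} together with Theorem \ref{subalgebra1} both operators lie in the commutative subalgebra $\mathrm{Im}(C)$, hence the exponential $\exp(\halb(\Delta_G - \Xi_{0,r}(\kappa_{0,r})))$ factorizes as $\exp(\halb\Delta_G)\circ\exp(-\halb\Xi_{0,r}(\kappa_{0,r}))$ (and analogously on the right of $\Gamma(T)$).

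Next I would establish, for every $\mu\in (E_\C^{\otimes m})^*_{sym}$, the intertwining
$$\Xi_{0,m}(\mu)\circ\Gamma(T) = \Gamma(T)\circ\Xi_{0,m}((T^{\otimes m})^*\mu).$$
Since both sides are continuous on $(E)_\beta$ and the exponential vectors $\{\Phi_\xi:\xi\in E_\C\}$ are total there, it suffices to evaluate on $\Phi_\xi$: using $\Gamma(T)\Phi_\xi = \Phi_{T\xi}$ and the elementary computation $\Xi_{0,m}(\mu)\Phi_\xi = \langle\mu,\xi^{\otimes m}\rangle\Phi_\xi$, both sides reduce to $\langle(T^{\otimes m})^*\mu,\xi^{\otimes m}\rangle\Phi_{T\xi}$. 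Iterating and summing, justified by Corollary \ref{expo}, yields
$$\exp(\Xi_{0,m}(\mu))\circ\Gamma(T) = \Gamma(T)\circ\exp(\Xi_{0,m}((T^{\otimes m})^*\mu)).$$

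Finally I would assemble the pieces. Applying this intertwining first to push $\exp(-\halb\Xi_{0,r}(\kappa_{0,r}))$ and then $\exp(\halb\Xi_{0,2}(\tau))$ past $\Gamma(T)$ from the left, and regrouping the resulting four mutually commuting convolution-type exponentials on the right of $\Gamma(T)$, one obtains
$$\Gamma_{\kappa_{0,r}}(T) = \Gamma(T)\circ\exp(\halb\Xi_{0,2}((T^{\otimes 2})^*\tau-\tau))\circ\exp(-\halb\Xi_{0,r}((T^{\otimes r})^*\kappa_{0,r}-\kappa_{0,r})).$$
Rewriting $(T^{\otimes m})^*\mu - \mu = (T^{\otimes m}-\mathrm{Id}^{\otimes m})^*\mu$ for $m=2$ and $m=r$ produces exactly the stated identity.

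I expect the main obstacle to be bookkeeping rather than any single deep step: one must track which exponential is pushed past $\Gamma(T)$ from which side, keep the signs straight, and justify the termwise exchange of each infinite series with $\Gamma(T)$, for which the continuity statements in Corollary \ref{expo} and Theorem \ref{subalgebra1} are indispensable.
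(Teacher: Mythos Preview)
Your argument is correct and rests on the same two facts the paper uses: the eigenstate relation $C_\varphi\Phi_\xi = S(\varphi)(\xi)\Phi_\xi$ (Corollary~\ref{eigenstate}) and $\Gamma(T)\Phi_\xi = \Phi_{T\xi}$, together with totality of exponential vectors. The difference is purely organizational. The paper simply evaluates both the left-hand side (via Definition~\ref{def_ren_op}) and the right-hand side directly on $\Phi_\xi$ and checks that each produces
\[
\exp\bigl(\tfrac12\langle\tau,(T^{\otimes 2}-1)\xi^{\otimes 2}\rangle\bigr)\cdot\exp\bigl(-\tfrac12\langle\kappa_{0,r},(T^{\otimes r}-1)\xi^{\otimes r}\rangle\bigr)\,\Phi_{T\xi},
\]
so no intertwining lemma is stated separately. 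Your route first isolates the intertwining $\exp(\Xi_{0,m}(\mu))\circ\Gamma(T)=\Gamma(T)\circ\exp(\Xi_{0,m}((T^{\otimes m})^*\mu))$ and then pushes exponentials through algebraically; this is slightly more modular and makes the mechanism reusable, at the cost of one extra step (factorizing the mixed exponential via commutativity in $\mathrm{Im}(C)$). Both approaches are equally short and neither requires anything the other does not.
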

\begin{proof}
On the one hand we calculate
\begin{align*}
&\ \Gamma(T)\circ \exp(\halb\Xi_{0,2}((T^{\otimes 2}-\mathrm{Id}^{\otimes 2})^*(\tau)))\circ
\exp(-\halb\Xi_{0,r}((T^{\otimes r}-\mathrm{Id}^{\otimes r})^*(\kappa_{0,r})))\Phi_\xi\\
=\ &\ (\exp(\halb\left\langle \tau,(T^{\otimes 2}-1)\xi^{\otimes 2}\right\rangle))\cdot(\exp(-\halb\left\langle (\kappa_{0,r}),(T^{\otimes r}-1)\xi^{\otimes r}\right\rangle))\Phi_{T\xi}
\end{align*}
On the other hand by Definition \ref{def_ren_op}
\begin{align*}
& \Gamma_{\kappa_{0,r}}(T)(\Phi_\xi)\\
&=\exp(\halb(\Delta_G-\Xi_{0,r}(\kappa_{0,r})))\circ \Gamma(T) \circ \exp(-\halb(\Delta_G-\Xi_{0,r}(\kappa_{0,r})))(\Phi_\xi)\\
&=\exp(-\halb(\left\langle \xi,\xi>-<\kappa_{0,r},\xi^{\otimes r}\right\rangle))\cdot \exp(\halb(\Delta_G-\Xi_{0,r}(\kappa_{0,r})))(\Phi_{T\xi})\\
&=\exp(-\halb(\left\langle \xi,\xi>-<\kappa_{0,r},\xi^{\otimes r}\right\rangle))\cdot \exp(\halb(\left\langle T\xi,T\xi\right\rangle-\left\langle \kappa_{0,r},(T\xi)^{\otimes r}\right\rangle))\Phi_{T\xi}\\
&=(\exp(\halb\left\langle \tau,(T^{\otimes 2}-1)\xi^{\otimes 2}\right\rangle))\cdot(\exp(-\halb\left\langle (\kappa_{0,r}),(T^{\otimes r}-1)\xi^{\otimes r}\right\rangle))\Phi_{T\xi}
\end{align*}
\end{proof}
We consider some concrete examples to Corollary \ref{ren_sec_quan}.
\begin{example}\label{ex_sec_quant}
Let $b \in \C,\ \theta\in \R$ and $\kappa_{0,2}\in (E_C^{\otimes 2})_{sym}$. Then
\begin{enumerate}[(i)]
\item
\begin{equation*}
\Gamma_{\kappa_{0,2}}(b\ \mathrm{Id}) = \Gamma(b\ \mathrm{Id})\exp(\halb(b^2-1)\Xi_{0,2}(\tau-\kappa_{0,2}))
\end{equation*}
\item
\begin{equation*}
\Gamma_{\kappa_{0,2}}(e^{i\theta}\ \mathrm{Id})= \Gamma(e^{i\theta}\ \mathrm{Id})\circ  \exp\left(i\cdot e^{i\theta}sin\theta\ \Xi_{0,2}(\tau-\kappa_{0,2})\right)
\end{equation*}
\item
\begin{equation*}
\Gamma_{\halb\tau}(e^{i\theta}\ \mathrm{Id}) = \Gamma(e^{i\theta}\ \mathrm{Id})\circ \exp\left(\frac{i}{2}e^{i\theta}sin\theta\ \Delta_G\right)
\end{equation*}
By \cite[Lemma 5.6.1, p.140]{Obata1} we conclude that $\Gamma_{\halb\tau}(e^{i\theta}\ \mathrm{Id})$ is the adjoint operator of the Fourier-Mehler transform $\mathscr{F}_\theta$.
\end{enumerate}
\end{example}

The next example gives conditions under which the Fourier-Gauss transforms are renormalized second quantized operators.
\begin{example}\label{ex_gauss}
Let $a,b\in \C$ and $b\notin \left\{-1,1\right\}$. Choose $\sigma$ with $\sigma^2 = \frac{a^2}{1-b^2}$. Then
\begin{equation*}
\Gamma_{\sigma^2\tau}(b\ \mathrm{Id})= \mathfrak{G}_{a,b}
\end{equation*}
Which follows immediately by Example \ref{ex_sec_quant}(i) with $\kappa_{0,2}=\sigma^2\tau$.
\end{example}

\begin{remark}
In the special case $a=0$, we have for the scaling operator $S_b := \mathfrak{G}_{0,b}=\Gamma_{0\tau}(b\ \mathrm{Id})$, i.e. $S_b(\left\langle x^{\otimes n},f_n\right\rangle) = \left\langle x^{\otimes n},b^n \cdot f_n\right\rangle$ for all $n\in\NZ,\ f_n \in (E_\C^{\otimes n})_{sym}$.\\
With $a,b\in \C$, $b\notin \left\{-1,1\right\}$ and $\sigma^2 = \frac{a^2}{1-b^2}$, we have
\begin{equation*}
\Gamma_{\sigma^2\kappa_{0,2}}(b\ \mathrm{Id}) = \Gamma(b\ \mathrm{Id})\circ \exp(\halb\left[a^2\Xi_{0,2}(\kappa_{0,2})+(b^2-1)\ \Delta_G\right])
\end{equation*}
\end{remark}
\begin{theorem}\label{sec_main_result}
Let $\left\{T_\theta\right\}_{\theta\in\R}$ be a differentiable one parameter subgroup\\
 of $GL(E_\C)$ with infinitesimal generator $T'$. Further let $r\in\N$, $1>\beta\geq\frac{max(0,r-2)}{r}$ and $\kappa_{0,r}\in (E_\C^{\otimes r})^*_{sym}$. Then $\left\{\Gamma_{\kappa_{0,r}}(T_\theta)\right\}_{\theta\in\R}$ is a regular one parameter subgroup of $GL((E)_\beta)$ with\\
\begin{equation*}
d\Gamma_{\kappa_{0,r}}(T')=\frac{d}{d\theta}\big|_{\theta=0} \Gamma_{\kappa_{0,r}}(T_\theta)= d\Gamma(T') +\halb\Xi_{0,2}(\gamma_2(T')^*\tau)-\halb\Xi_{0,r}(\gamma_r(T')^*\kappa_{0,r})
\end{equation*}
\end{theorem}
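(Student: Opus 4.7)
The plan is to exploit the conjugation representation $\Gamma_{\kappa_{0,r}}(T_\theta)=V\,\Gamma(T_\theta)\,V^{-1}$ with $V:=\exp(\halb(\Delta_G-\Xi_{0,r}(\kappa_{0,r})))$, which is just Definition \ref{def_ren_op} applied with $T=T_\theta$ (together with the abbreviation $\Gamma_{\kappa_{0,r}}(\Omega)=(\Gamma(\Omega))_{\kappa_{0,r}}$). Since $\Delta_G=\Xi_{0,2}(\tau)$ and $\Xi_{0,r}(\kappa_{0,r})$ both lie in the commutative convolution subalgebra $\mathrm{Im}(C)\subset L((E)_\beta,(E)_\beta)$ from Theorem \ref{subalgebra1}, they commute, and Corollary \ref{wickmult} combined with Corollary \ref{expo} yields $V=\exp(\halb\Delta_G)\circ\exp(-\halb\Xi_{0,r}(\kappa_{0,r}))\in GL((E)_\beta)$ with inverse $V^{-1}=\exp(-\halb(\Delta_G-\Xi_{0,r}(\kappa_{0,r})))$. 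Conjugation by the fixed $V\in GL((E)_\beta)$ transports the differentiable one-parameter subgroup $\{\Gamma(T_\theta)\}_{\theta\in\R}$ of $GL((E)_\beta)$ provided by Proposition \ref{basics2_group} to a differentiable one-parameter subgroup $\{\Gamma_{\kappa_{0,r}}(T_\theta)\}_{\theta\in\R}$ of $GL((E)_\beta)$ whose infinitesimal generator is $V\circ d\Gamma(T')\circ V^{-1}$; regularity then comes for free from Theorem \ref{oneparam_mainresult}.

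What remains is to identify this conjugated generator with the right-hand side of the claimed formula. I would carry this out on exponential vectors, since operators in $L((E)_\beta,(E)_\beta)$ are determined by their action on $\{\Phi_\xi:\xi\in E_\C\}$. The intermediate computation inside the proof of Corollary \ref{ren_sec_quan} already gives
\begin{equation*}
\Gamma_{\kappa_{0,r}}(T_\theta)\Phi_\xi=f_1(\theta,\xi)\,f_2(\theta,\xi)\,\Phi_{T_\theta\xi},
\end{equation*}
with $f_1(\theta,\xi)=\exp(\halb\langle\tau,(T_\theta^{\otimes 2}-\mathrm{Id}^{\otimes 2})\xi^{\otimes 2}\rangle)$ and $f_2(\theta,\xi)=\exp(-\halb\langle\kappa_{0,r},(T_\theta^{\otimes r}-\mathrm{Id}^{\otimes r})\xi^{\otimes r}\rangle)$. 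Using $f_1(0,\xi)=f_2(0,\xi)=1$, $\Phi_{T_0\xi}=\Phi_\xi$, the fact that $\theta\mapsto\Phi_{T_\theta\xi}=\Gamma(T_\theta)\Phi_\xi$ is $(E)_\beta$-differentiable with derivative $d\Gamma(T')\Phi_\xi$ (Proposition \ref{basics2_group}), and Lemma \ref{tensor_of_transf} to compute $\frac{d}{d\theta}\big|_{\theta=0}T_\theta^{\otimes n}\xi^{\otimes n}=\gamma_n(T')\xi^{\otimes n}$, the product rule for a scalar-valued times $(E)_\beta$-valued product yields
\begin{equation*}
\frac{d}{d\theta}\Big|_{\theta=0}\Gamma_{\kappa_{0,r}}(T_\theta)\Phi_\xi=d\Gamma(T')\Phi_\xi+\halb\langle\gamma_2(T')^*\tau,\xi^{\otimes 2}\rangle\Phi_\xi-\halb\langle\gamma_r(T')^*\kappa_{0,r},\xi^{\otimes r}\rangle\Phi_\xi.
\end{equation*}
Corollary \ref{eigenstate} applied to the convolution operators $\Xi_{0,m}(\cdot)$ (which, for any $\eta\in(E_\C^{\otimes m})^*$, satisfy $\Xi_{0,m}(\eta)\Phi_\xi=\langle\eta,\xi^{\otimes m}\rangle\Phi_\xi$) then rewrites the two scalar-multiplied $\Phi_\xi$ terms as $\halb\Xi_{0,2}(\gamma_2(T')^*\tau)\Phi_\xi$ and $\halb\Xi_{0,r}(\gamma_r(T')^*\kappa_{0,r})\Phi_\xi$, matching the claim on every $\Phi_\xi$ and hence as operators.

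The main technical care I anticipate is the product-rule step: converting pointwise differentiation on exponential vectors into a clean operator-level identity. The routine justification is that $f_1,f_2$ are holomorphic scalar functions of $\theta$ while $\theta\mapsto\Phi_{T_\theta\xi}$ is $(E)_\beta$-differentiable, so the usual product rule in the locally convex space $(E)_\beta$ applies; strictly speaking one also needs the limit to control the whole operator and not only its value on $\Phi_\xi$, but this is automatic from the conjugation identity above. As a sanity check one may compare operator symbols directly: both sides of the claimed identity produce the symbol
\begin{equation*}
\bigl(\langle T'\xi,\eta\rangle+\halb\langle\gamma_2(T')^*\tau,\xi^{\otimes 2}\rangle-\halb\langle\gamma_r(T')^*\kappa_{0,r},\xi^{\otimes r}\rangle\bigr)e^{\langle\xi,\eta\rangle},
\end{equation*}
and equal symbols force equal operators in $L((E)_\beta,(E)_\beta^*)$.
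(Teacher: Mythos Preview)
Your proof is correct but follows a somewhat different path from the paper's. You use the conjugation identity $\Gamma_{\kappa_{0,r}}(T_\theta)=V\,\Gamma(T_\theta)\,V^{-1}$ straight from Definition~\ref{def_ren_op}, which immediately yields the one-parameter group structure, differentiability (via Proposition~\ref{basics2_group}), and regularity (via Theorem~\ref{oneparam_mainresult}); you then identify the generator $V\,d\Gamma(T')\,V^{-1}$ by differentiating the explicit formula from the proof of Corollary~\ref{ren_sec_quan} on exponential vectors with the scalar-times-vector product rule. The paper instead expands $\Gamma_{\kappa_{0,r}}(T_\theta)$ into the three-factor product of Corollary~\ref{ren_sec_quan}, uses the conjugation observation only to show that the first two factors together, namely $\Gamma_{0\tau}(T_\theta)$, form a regular subgroup, and then applies the operator-level product rule Proposition~\ref{basics1_group} together with Proposition~\ref{basics4_group} to differentiate the remaining factor $\exp(-\halb\Xi_{0,r}((T_\theta^{\otimes r}-\mathrm{Id}^{\otimes r})^*\kappa_{0,r}))$. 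Your route is more direct and bypasses Proposition~\ref{basics4_group} entirely; the paper's route keeps the computation at the operator level throughout and shows how Propositions~\ref{basics1_group} and~\ref{basics4_group} are meant to be used in tandem. One minor quibble: $f_1,f_2$ are smooth in the real parameter $\theta$ rather than holomorphic, but that is all the product-rule step requires.
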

\begin{proof}
By Corollary \ref{ren_sec_quan} we have 
\begin{multline*}
\Gamma_{\kappa_{0,r}}(T_\theta)=\Gamma(T_\theta)\circ \exp(\halb\Xi_{0,2}((T_\theta^{\otimes 2}-\mathrm{Id}^{\otimes 2})^*(\tau)))\\
\circ\exp(-\halb\Xi_{0,r}((T_\theta^{\otimes r}-\mathrm{Id}^{\otimes r})^*(\kappa_{0,r}))).
\end{multline*}
First note that $(G\circ \Gamma(T_\theta)\circ G^{-1})_{\theta\in\R}$ is obviously a regular one-parameter subgroup of $GL((E)_\beta)$ for all $G\in GL((E)_\beta)$. \\
Consequently  $(\Gamma_{0\tau}(T_\theta))_{\theta\in\R}$, with $$\Gamma_{0\tau}(T_\theta)=\Gamma(T_\theta)\circ \exp(\halb\Xi_{0,2}((T_\theta^{\otimes 2}-\mathrm{Id}^{\otimes 2})^*(\tau))),$$ is a regular one-parameter subgroup of $GL((E)_\beta)$. Because
\begin{multline*}
\Gamma_{\kappa_{0,r}}(T_\theta)= \left[\Gamma(T_\theta)\circ \exp(\halb\Xi_{0,2}((T_\theta^{\otimes 2}-\mathrm{Id}^{\otimes 2})^*(\tau)))\right]\\
\circ\ 
\exp(-\halb\Xi_{0,r}((T_\theta^{\otimes r}-\mathrm{Id}^{\otimes r})^*(\kappa_{0,r}))),
\end{multline*}
the claim follows by Proposition \ref{basics1_group}, then Proposition \ref{basics2_group} and Proposition \ref{basics4_group}.
\end{proof}
\begin{corollary}\label{ren_infin_gen}
Let $a,b\in \C$ and $b\neq 0$.\\
Then $bN_{(1-\frac{a}{b})\tau}=\frac{d}{d\theta}\big|_{\theta=0}\Gamma_{(1-\frac{a}{b})\tau}(e^{b\theta}\ \mathrm{Id}) = a\Delta_G + bN$.
\end{corollary}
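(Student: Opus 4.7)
The plan is to apply Theorem~\ref{sec_main_result} with $r=2$ and $\kappa_{0,2}=(1-\tfrac{a}{b})\tau\in(E_\C^{\otimes 2})^*_{sym}$, specializing the one-parameter subgroup on $E_\C$ to $T_\theta:=e^{b\theta}\,\mathrm{Id}$. First I would check the hypotheses: clearly $\{T_\theta\}_{\theta\in\R}$ is a differentiable one-parameter subgroup of $GL(E_\C)$, with infinitesimal generator $T'=b\,\mathrm{Id}$; and since $r=2$, the admissibility condition $\tfrac{\max(0,r-2)}{r}\leq\beta<1$ reduces to $0\leq\beta<1$, so the theorem applies over the whole range of spaces.

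Theorem~\ref{sec_main_result} then yields regularity of $\{\Gamma_{(1-\frac{a}{b})\tau}(e^{b\theta}\mathrm{Id})\}_{\theta\in\R}$ and the identity
\begin{equation*}
\frac{d}{d\theta}\Big|_{\theta=0}\Gamma_{(1-\frac{a}{b})\tau}(e^{b\theta}\mathrm{Id})=d\Gamma(b\,\mathrm{Id})+\halb\Xi_{0,2}\bigl(\gamma_2(b\,\mathrm{Id})^*\tau\bigr)-\halb\Xi_{0,2}\Bigl(\gamma_2(b\,\mathrm{Id})^*\bigl(1-\tfrac{a}{b}\bigr)\tau\Bigr).
\end{equation*}
The first summand is $bN$ by the very definition of the number operator. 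For the remaining two summands I would compute $\gamma_2(b\,\mathrm{Id})=b\,\mathrm{Id}\otimes\mathrm{Id}+\mathrm{Id}\otimes b\,\mathrm{Id}=2b\,\mathrm{Id}^{\otimes 2}$, which is self-adjoint, so $\gamma_2(b\,\mathrm{Id})^*\tau=2b\tau$ and, by linearity, $\gamma_2(b\,\mathrm{Id})^*(1-\tfrac{a}{b})\tau=2(b-a)\tau$.

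Substituting and using $\Delta_G=\Xi_{0,2}(\tau)$ collapses the sum to
\begin{equation*}
bN+\halb\Xi_{0,2}(2b\tau)-\halb\Xi_{0,2}(2(b-a)\tau)=bN+b\Delta_G-(b-a)\Delta_G=a\Delta_G+bN,
\end{equation*}
which gives the middle equality. The outer equality is just the notational convention $bN_{(1-\frac{a}{b})\tau}:=\tfrac{d}{d\theta}\big|_{\theta=0}\Gamma_{(1-\frac{a}{b})\tau}(e^{b\theta}\mathrm{Id})$ from Definition~\ref{def_ren_op} (since $\Gamma(e^{b\theta}\mathrm{Id})$ is the second quantization of the generator $bN$). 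There is no real obstacle: all the content sits in Theorem~\ref{sec_main_result}, and the corollary is a matter of computing $\gamma_2(b\,\mathrm{Id})$ and verifying that the $\Delta_G$ coefficients $b-(b-a)=a$ combine correctly.
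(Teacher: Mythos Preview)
Your proposal is correct and follows essentially the same approach as the paper: both apply Theorem~\ref{sec_main_result} with $r=2$, identify $d\Gamma(b\,\mathrm{Id})=bN$, and reduce the $\Xi_{0,2}$ terms via $\gamma_2(b\,\mathrm{Id})=2b\,\mathrm{Id}^{\otimes 2}$. The only cosmetic difference is that the paper first records the general identity $N_{\kappa_{0,r}}=N+\Delta_G-\tfrac{r}{2}\Xi_{0,r}(\kappa_{0,r})$ (taking $T'=\mathrm{Id}$) and then multiplies by $b$ and specializes $\kappa_{0,2}=(1-\tfrac{a}{b})\tau$, whereas you specialize from the outset with $T'=b\,\mathrm{Id}$.
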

\begin{proof}
By \cite[Proposition 4.6.13., p.107]{Obata1} we have $N:=\Xi_{1,1}(\tau)=d\Gamma(\mathrm{Id})$. Because $bId$ is the infinitesimal generator of $\left\{e^{b\theta} \mathrm{Id}\right\}_{\theta\in\R}$, it follows by Proposition \ref{basics2_group} that $$\frac{d}{d\theta}\big|_{\theta=0}(\Gamma(e^{b\theta}\ \mathrm{Id}))=d\Gamma(b\ \mathrm{Id}) = \Xi_{1,1}(b\tau)= bN,$$ for $b\in\C$.\\
On the other hand, in the general case $r\in\N$, $1>\beta\geq\frac{max(0,r-2)}{r}$ and $\kappa_{0,r}\in (E_\C^{\otimes r})^*_{sym}$, it holds from Theorem \ref{sec_main_result}, with $T'=\mathrm{Id}$, that
\begin{equation}\label{eq_sec_number}
N_{\kappa_{0,r}}=\frac{d}{d\theta}\big|_{\theta=0}\Gamma_{\kappa_{0,r}}(e^{\theta}\mathrm{Id}) = N + \Delta_G - \frac{r}{2} \Xi_{0,r}({\kappa_{0,r}}).
\end{equation}
Multiplicating both sides with b, the claim is immediate with $\kappa_{0,r}=(1-\frac{a}{b})\tau$.
\end{proof}

\begin{remark}
Let $b\neq 0$. The regular one-parameter subgroup $$\left\{\Gamma_{(1-\frac{a}{b})\tau}(e^{b\theta}\ \mathrm{Id})\right\}_{\theta\in\R}\subset GL((E)_\beta),\ 0\leq\beta<1$$ can, Example \ref{ex_gauss}, be identified as the Fourier-Gauss transforms $\left\{\mathfrak{G}_{x,e^{b\theta}}\right\}$, where $x^2 = (1-\frac{a}{b})(1-e^{2b\theta})$.\\
The case $b=0$ is solved by \ref{regular}. We get $\left\{\mathfrak{G}_{\sqrt{2a\theta},1}\right\}$ as solution.\\
Note, that the special choice of x from $x^2$ has no influence, because $\left\{\mathfrak{G}_{x,e^{b\theta}}\right\}$ only depends on $x^2$ and $e^{b\theta}$.
\end{remark}
We summarize this discussion using the definition of the Fourier-Gauss transform in \cite[Definition 11.24, p. 164]{Kuo2}. The following theorem is a generalization of the Mehler formula for the Ornstein-Uhlenbeck semigroup, see e.g.~\cite[p. 237]{Hida1} 
\begin{theorem}
Let $a,b \in \C$, $0\leq\beta<1$. Then $a\cdot\Delta_G + b\cdot N$ is the infinitesimal generator of the following regular transformation group $\left\{P_{a,b,t}\right\}_{t\in\R}\subset GL((E)_\beta)$:
\begin{enumerate}[(i)]
	\item 
	if $b\neq 0$ then for all $\varphi\in (E)_\beta,\ t\in\R$:
	\begin{equation*}
	\ P_{a,b,t}(\varphi)=\int\limits_{E^*}\varphi(\sqrt{(1-\frac{a}{b})(1-e^{2bt})}\cdot x + e^{bt}\cdot y)\ d\mu(x)
	\end{equation*}
	\item
		if $b=0$ then $\varphi\in (E)_\beta,\ t\in\R$:
	\begin{equation*}
	\ P_{a,0,t}(\varphi)=\int\limits_{E^*}\varphi(\sqrt{2at}\cdot x+y)\ d\mu(x)
	\end{equation*}
\end{enumerate}
\end{theorem}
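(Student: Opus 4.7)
The plan is to split into the two cases $b\neq 0$ and $b=0$, and in each case to identify the proposed transformation $P_{a,b,t}$ with an operator whose regularity and infinitesimal generator have already been established in the preceding sections.

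For the case $b\neq 0$, I would invoke Corollary \ref{ren_infin_gen} together with Theorem \ref{sec_main_result} applied with $r=2$, $\kappa_{0,2}=(1-\frac{a}{b})\tau$ and $T_\theta=e^{b\theta}\mathrm{Id}$: the family $\{\Gamma_{(1-\frac{a}{b})\tau}(e^{bt}\mathrm{Id})\}_{t\in\R}$ is a regular one-parameter subgroup of $GL((E)_\beta)$ whose infinitesimal generator is $a\Delta_G+bN$. It remains to identify this group pointwise with $\{P_{a,b,t}\}$. For each $t\in\R$ with $e^{bt}\notin\{-1,1\}$, set $B:=e^{bt}$ and $A:=\sqrt{(1-\frac{a}{b})(1-e^{2bt})}$; then $A^2/(1-B^2)=1-\frac{a}{b}$, and Example \ref{ex_gauss} yields $\Gamma_{(1-\frac{a}{b})\tau}(B\,\mathrm{Id})=\mathfrak{G}_{A,B}=P_{a,b,t}$ directly from the defining integral of the Fourier--Gauss transform. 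At the isolated exceptional values where $e^{bt}=\pm 1$ one has $A=0$, so $P_{a,b,t}(\varphi)(y)=\varphi(\pm y)=\Gamma(\pm\mathrm{Id})\varphi$; on the other hand, Corollary \ref{ren_sec_quan} applied with $T=\pm\mathrm{Id}$ makes both exponential correction factors collapse to the identity, so that $\Gamma_{(1-\frac{a}{b})\tau}(\pm\mathrm{Id})=\Gamma(\pm\mathrm{Id})$ as well.

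For the case $b=0$, observe that $a\Delta_G=\Xi_{0,2}(a\tau)$ is, by Proposition \ref{conv_sum_op}, the convolution operator $\mathscr{C}_\varphi$ associated with $\varphi:=\langle :x^{\otimes 2}:,a\tau\rangle\in (E)^*_\beta$, whose Wiener--It\^o chaos decomposition is supported in order $2$. Proposition \ref{regular} (applied with $r=2$, admissible for every $0\leq\beta<1$) then produces a regular one-parameter subgroup $\{\exp(ta\Delta_G)\}_{t\in\R}\subset GL((E)_\beta)$ with infinitesimal generator $a\Delta_G$. Using the identity $\mathfrak{G}_{A,1}=\Gamma(\mathrm{Id})\circ\exp(\frac{A^2}{2}\Delta_G)=\exp(\frac{A^2}{2}\Delta_G)$ recorded at the beginning of Section 6, setting $A=\sqrt{2at}$ gives $\exp(ta\Delta_G)=\mathfrak{G}_{\sqrt{2at},1}=P_{a,0,t}$, which completes the identification.

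The main obstacle is essentially bookkeeping: one must check that the parametrizations in Corollary \ref{ren_infin_gen}, Example \ref{ex_gauss} and the statement of the theorem are mutually consistent, and carry out the limiting argument at the isolated exceptional values $e^{bt}=\pm 1$ where Example \ref{ex_gauss} cannot be applied directly. Both steps ultimately reduce to comparing the operator symbols $\widehat{\cdot}(\xi,\eta)$ on exponential vectors $\Phi_\xi,\Phi_\eta$, $\xi,\eta\in E_\C$, and exploit the continuous (indeed holomorphic) dependence of $\widehat{\mathfrak{G}_{A,B}}$ on $(A,B)$ together with the already-proved continuity of the group $t\mapsto \Gamma_{(1-\frac{a}{b})\tau}(e^{bt}\mathrm{Id})$ in $L((E)_\beta)$.
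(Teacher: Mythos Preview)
Your proposal is correct and follows essentially the same route as the paper: the theorem is stated there as a summary of the preceding discussion, namely Corollary~\ref{ren_infin_gen} together with Theorem~\ref{sec_main_result} and Example~\ref{ex_gauss} for the case $b\neq 0$, and Proposition~\ref{regular} for the case $b=0$. Your treatment of the isolated values $e^{bt}=\pm 1$ via Corollary~\ref{ren_sec_quan} is more explicit than the paper's, which simply notes that $\mathfrak{G}_{x,e^{b\theta}}$ depends only on $x^2$ and $e^{b\theta}$.
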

On an informal level the second case of the above theorem may be considered as a special case of the first one. Note that by the rules of l'Hôpital we have $\lim\limits_{b\rightarrow 0} (b-a)\frac{1-e^{2bt}}{b}= 2at$.
%
%
%
%
\section {Summary and Bibligraphical Notes}
%
The investigation of regular transformation groups in this manuscript can be  compared with \cite{Chung1}.There the authors first construct a two-parameter transformation group $G$ on the space of white noise test functions $(E)$ in which the adjoints of Kuo's Fourier and Kuo's Fourier-Mehler transforms are included. They show that the group $G$ is a two-dimensional complex Lie group whose infinitesimal generators are the Gross Laplacian $\Delta_G$ and the number operator $N$, and then find an explicit description of a differentiable one-parameter subgroup of $G$ whose infinitesimal generator is $a\Delta_G +bN$, which is identical with ours.\\
Using the unique representation of white noise spaces by generalized Wick tensors and corresponding renormalized operators we have presented a different way which leads to the more general result for the spaces $(E)_\beta$. Using convolution operators and the fact that differentiable one parameter transformation groups on nuclear Fréchet spaces are always regular, convergence problems could be solved.
%
%
\par\bigskip\noindent
{\bf Acknowledgment.} 
The authors would like to thank to Prof. H.-H. Kuo for his encouragement to publish this manuscript. W.~Bock would like to thank for the financial support from the FCT project PTDC/MAT-STA/1284/2012.
%

\end{document}